\begin{document}

\newcommand{\ci}[1]{_{ {}_{\scriptstyle #1}}}

\newcommand{\norm}[1]{\ensuremath{\|#1\|}}
\newcommand{\abs}[1]{\ensuremath{\vert#1\vert}}
\newcommand{\p}{\ensuremath{\partial}}
\newcommand{\pr}{\mathcal{P}}

\newcommand{\pbar}{\ensuremath{\bar{\partial}}}
\newcommand{\db}{\overline\partial}
\newcommand{\D}{\mathbb{D}}
\newcommand{\B}{\mathbb{B}}
\newcommand{\Sp}{\mathbb{S}}
\newcommand{\T}{\mathbb{T}}
\newcommand{\R}{\mathbb{R}}
\newcommand{\Z}{\mathbb{Z}}
\newcommand{\C}{\mathbb{C}}
\newcommand{\N}{\mathbb{N}}
\newcommand{\scrH}{\mathcal{H}}
\newcommand{\scrL}{\mathcal{L}}
\newcommand{\td}{\widetilde\Delta}

\newcommand{\La}{\langle }
\newcommand{\Ra}{\rangle }
\newcommand{\rk}{\operatorname{rk}}
\newcommand{\card}{\operatorname{card}}
\newcommand{\ran}{\operatorname{Ran}}
\newcommand{\im}{\operatorname{Im}}
\newcommand{\re}{\operatorname{Re}}
\newcommand{\tr}{\operatorname{tr}}
\newcommand{\vf}{\varphi}
\newcommand{\f}[2]{\ensuremath{\frac{#1}{#2}}}


\newcommand{\entrylabel}[1]{\mbox{#1}\hfill}

\newenvironment{entry}
{\begin{list}{X}%
  {\renewcommand{\makelabel}{\entrylabel}%
      \setlength{\labelwidth}{55pt}%
      \setlength{\leftmargin}{\labelwidth}
      \addtolength{\leftmargin}{\labelsep}%
   }%
}%
{\end{list}}


\numberwithin{equation}{section}

\newtheorem{thm}{Theorem}[section]
\newtheorem{lm}[thm]{Lemma}
\newtheorem{cor}[thm]{Corollary}
\newtheorem{conj}[thm]{Conjecture}
\newtheorem{prob}[thm]{Problem}
\newtheorem{prop}[thm]{Proposition}
\newtheorem*{prop*}{Proposition}

\theoremstyle{remark}
\newtheorem{rem}[thm]{Remark}
\newtheorem*{rem*}{Remark}

\title{Stabilization in $H^\infty_\R(\D)$}
\author[B. D. Wick]{Brett D. Wick$^\ast$}
\address{Brett D. Wick, School of Mathematics\\ Georgia Institute of Technology\\ 686 Cherry Street\\  Atlanta, GA USA 30332-0160}
\email{wick@math.gatech.edu}
\thanks{$\ast$ Research supported in part by a National Science Foundation DMS Grant \# 0752703}

\subjclass[2000]{Primary 46E25; 46J10}
\keywords{Banach Algebras, Control Theory, Corona Theorem, Stable Rank}

\begin{abstract}

In this paper we prove the following theorem:  Suppose that $f_1,f_2\in H^\infty_\R(\D)$, with $\norm{f_1}_\infty,\norm{f_2}_{\infty}\leq 1$, with
$$
\inf_{z\in\D}\left(\abs{f_1(z)}+\abs{f_2(z)}\right)=\delta>0.
$$
Assume for some $\epsilon>0$ and small, $f_1$ is positive on the set of $x\in(-1,1)$ where $\abs{f_2(x)}<\epsilon$ for some $\epsilon>0$ sufficiently small.  Then there exists $g_1, g_1^{-1}, g_2\in H^\infty_\R(\D)$ with 
$$
\norm{g_1}_\infty,\norm{g_2}_\infty,\norm{g_1^{-1}}_\infty\leq C(\delta,\epsilon)
$$
and
$$
f_1(z)g_1(z)+f_2(z)g_2(z)=1\quad\forall z\in\D.
$$
\end{abstract}

\maketitle
\setcounter{tocdepth}{1}
\tableofcontents
\section*{Notation}
\begin{entry}
\item[$:=$] equal by definition;\\
\item[$\C$] complex plane;\\
\item[$\re z$] real part of $z\in\C$;\\
\item[$\im z$] imaginary part of $z\in\C$;\\
\item[$\D$] the unit disc, $\D:=\{z\in\C:\abs{z}<1\}$;\\
\item[$\C_+$] the upper half-plane, $\C_+:=\{z\in\C: \im z>0\}$;\\
\item[$\T$] the unit circle, $\T:=\p\D$;\\
\item[$\R$] the real line, $\R:=\p\C_+$;\\
\item[$H^\infty$] the algebra of all bounded analytic functions on either $\C_+$ ($H^\infty(\C_+)$) or $\D$ ($H^\infty(\D)$);\\
\item[$H^\infty_\R$] the algebra of all bounded analytic functions with real Fourier coefficients on either $\C_+$ ($H^\infty_\R(\C_+)$) or $\D$ ($H^\infty_\R(\D)$);\\
\end{entry}

\setcounter{section}{-1}

\section{Introduction and Main Results}
\label{Intro.}
The stable rank of a ring (also called the Bass stable rank) was introduced by H. Bass in \cite{Bass} to assist in computations of algebraic K-Theory.  We recall the definition of the stable rank.

Let $A$ be an algebra with a unit $e$.  An n-tuple $a\in A^n$ is called \textit{unimodular} if there exists an $n$-tuple $b\in A^n$ such that $\sum_{j=1}^{n}a_jb_j=e$.  An $n$-tuple $a$ is called \textit{stable} or \textit{reducible} if there exists an $(n-1)$-tuple $x$ such that the $(n-1)$-tuple $(a_1+x_1a_n,\ldots, a_{n-1}+x_{n-1}a_{n})$ is unimodular.  The \textit{stable rank} (also called \textit{bsr}(A) in the literature) of the algebra $A$ is the least integer $n$ such that every unimodular $(n+1)$-tuple is reducible.  

The stable rank is a purely algebraic concept but can be combined with analysis when studying commutative Banach algebras of functions.  In this context, the stable rank is related to the zero sets of ideals, and the spectrum of the Banach algebra.  The stable rank for different algebras of analytic functions have been considered by many authors.  The computation of the stable rank of the disc algebra $A(\D)$ was shown to be one by Jones, Marshall and Wolff, \cite{JMW}.  The computation was done for sub-algebras of the disk algebra $A(\D)$ by Corach--Su\'arez, \cite{CS}, and Rupp \cite{R}.

For the Banach algebra $H^\infty(\D)$, the classification of its unimodular elements and its stable rank are well understood.  Carleson's Corona Theorem, see \cite{Carleson}, can be phrased as an $n$-tuple $(f_1,\ldots,f_n)\in H^\infty(\D)^n$ is unimodular if and only if it satisfies the Corona condition,
$$
\inf_{z\in\D}\left(\abs{f_1(z)}+\cdots+\abs{f_n(z)}\right)=\delta>0.
$$

The stable rank of $H^\infty(\D)$ was computed by S. Treil and is one of the motivations for this paper.  Treil's result is the following theorem:

\begin{thm}[S. Treil, \cite{TreilStable}]
\label{stableHinfty}
Let $f_1,f_2\in H^\infty(\D)$ be such that $\inf_{z\in\D}(\abs{f_1(z)}+\abs{f_2(z)})=\delta>0$.  Then there exists $g_1, g_2, g_1^{-1}\in H^\infty(\D)$ with $\norm{g_1}_\infty,\norm{g_2}_{\infty}$ and $\norm{g_1^{-1}}_\infty$ controlled by $C(\delta)$, a constant only depending on $\delta$, and  
$$
1=f_1(z)g_1(z)+f_2(z)g_2(z)\quad\forall z\in\D.
$$
\end{thm}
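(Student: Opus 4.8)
I would first restate the conclusion as the (quantitative) assertion that the Bass stable rank of $H^\infty(\D)$ is one: given corona data $f_1,f_2\in H^\infty(\D)$ with $\norm{f_1}_\infty,\norm{f_2}_\infty\le1$ and $\inf_\D(\abs{f_1}+\abs{f_2})=\delta>0$, it is enough to produce $h\in H^\infty(\D)$ with $\norm{h}_\infty\le C(\delta)$ such that $u:=f_1+f_2h$ is invertible in $H^\infty(\D)$ with $\norm{u^{-1}}_\infty\le C(\delta)$; then $g_1:=u^{-1}$ and $g_2:=hu^{-1}$ work, since $f_1g_1+f_2g_2=u^{-1}(f_1+f_2h)=1$. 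So the whole problem is to perturb $f_1$, inside the ideal $f_2H^\infty(\D)$ and in a controlled way, onto a zero-free function whose modulus is bounded below by a constant depending only on $\delta$. Invoking the Corona theorem quoted above, together with a standard reduction using the inner--outer factorization $f_2=SBO$ (on the sets where the singular inner factor $S$ or the outer factor $O$ is small one has $\abs{f_2}\le\min(\abs{S},\abs{O})$, hence $\abs{f_1}\gtrsim\delta$, so those regions are harmless), I would reduce to the case $f_2=B$, a Blaschke product with zero set $Z(B)=\{z_j\}$; the entire difficulty then sits in a neighbourhood of $Z(B)$.

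The skeleton of the construction is: (i) build a \emph{smooth} function $w$ on $\D$ with $c(\delta)\le\abs{w}\le C(\delta)$, with $w\equiv1$ on the good region $G_\eta:=\{\abs{B}\ge\eta\}$ for a small $\eta=\eta(\delta)$, and with $w\equiv f_1$ on $\{\abs{B}\le\eta/2\}$ (so $w-f_1$ vanishes on $Z(B)$ to the order of $B$); and (ii) correct $w$ to an analytic function of the same shape by setting $u:=we^{B\phi}$ and choosing the smooth $\phi$ so that $u$ is analytic, i.e.
\[
\db\phi=-\frac{\db w}{Bw}.
\]
The right-hand side is supported in the transition region $\{\eta/2<\abs{B}\le\eta\}$, where division by $B$ costs only a factor $2/\eta$; choosing the cutoff that defines $w$ adapted to the level sets of $\abs{B}$ (Wolff's device from the proof of the Corona theorem) makes the datum's Carleson mass, and hence the $L^\infty$-norm of the solution $\phi$, bounded by $C(\delta,\eta)$. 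Then $\abs{u}=\abs{w}e^{\re(B\phi)}\ge c(\delta)e^{-C(\delta,\eta)}>0$ and $\abs{u}\le C(\delta)$, so $u$ is invertible in $H^\infty(\D)$ with controlled inverse; moreover near $z_j$ we have $w=f_1$ and $e^{B\phi}-1=B\phi+\cdots$, so $u-f_1$ vanishes on $Z(B)$ to the order of $B$, whence $h:=(u-f_1)/B\in H^\infty(\D)$ with $\norm{h}_\infty=\norm{u-f_1}_\infty\le C(\delta)$ (using that $B$ is inner, so $\abs{h}=\abs{u-f_1}$ a.e.\ on $\T$) and $f_1+Bh=u$. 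Up to making all constants depend on $\delta$ alone, this finishes the proof.

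The step I expect to be the real obstacle is (i): producing the bounded-below smooth bridge $w$ from $f_1$ (on $\{\abs{B}\le\eta/2\}$) to the invertible model $1$ (on $G_\eta$). The natural candidate $w=f_1^{\,1-\chi}$ with $\chi$ a cutoff would keep $\abs{w}=\abs{f_1}^{1-\chi}\in[\delta,1]$, but it requires a single-valued, controlled branch of $\log f_1$ on the transition region, and in general the argument of $f_1$ winds arbitrarily along a neighbourhood of $Z(B)$, so no such branch exists globally. The resolution I have in mind is a stopping-time / Carleson-box decomposition of $\D$: one chops the transition region into boxes on each of which the zeros of $B$ form a hyperbolically bounded cluster, so that the Schwarz--Pick estimate $\abs{f_1'(z)}\le(1-\abs{z}^2)^{-1}$ combined with $\abs{f_1(z)}\ge\delta/2$ gives $\bigl|\tfrac{d}{dz}\arg f_1\bigr|\le\delta^{-1}(1-\abs{z}^2)^{-1}$, hence $\arg f_1$ varies by at most $C(\delta)$ across each box and a controlled branch of $\log f_1$ exists there; one then defines $w$ box by box with the local branch and reconciles neighbouring boxes by absorbing the jumps into the $\db$-correction of (ii), whose Carleson datum is then controlled by the (absolute) Carleson constant of the box family. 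Verifying that this decomposition can be made so that all Carleson masses involved — of the transition region and of the box family — are bounded by a function of $\delta$ alone, uniformly over the corona data, is the crux; granting it, the theorem follows from the Corona theorem together with the bookkeeping above.
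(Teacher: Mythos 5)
Your skeleton --- perturb $f_1$ inside the ideal generated by $f_2$ to an invertible $u=f_1+f_2h$, with the perturbation obtained from a $\db$-correction whose datum is controlled by a stopping-time decomposition, the whole difficulty being the lack of a globally controlled branch of $\log f_1$ near the sublevel set of $f_2$ --- is exactly the skeleton of Treil's argument, which this paper quotes without proof and then adapts (the correcting function $\kappa=V-v$ of Proposition \ref{Correcting}, Bourgain's stopping time in Lemma \ref{CarlRegion}, and Theorem \ref{dbaronD}). There are, however, two genuine gaps. The first is the reduction to $f_2=B$ a Blaschke product. The observation that $\abs{f_1}\gtrsim\delta$ wherever the singular or outer factor of $f_2$ is small only shows that $(f_1,B)$ is again corona data; it does not convert a solution of ``$f_1+Bh$ invertible'' into a solution of ``$f_1+f_2\tilde h$ invertible'', because $\tilde h$ would have to be $h/(SO)$, which is not bounded (and $f_2$ may have no zeros at all, e.g.\ a singular inner function, in which case $B\equiv1$ and your $h=(u-f_1)/B$ solves the wrong problem). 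The fix is either to run your construction with $f_2$ itself in place of $B$ --- take $w=f_1$ on $\{\abs{f_2}\le\eta/2\}$, set $u=we^{f_2\phi}$ with $\db\phi=-\db w/(f_2w)$, and check the pointwise bound $\abs{u-f_1}\le C(\delta,\eta)\abs{f_2}$ on all of $\D$, which already yields $(u-f_1)/f_2\in H^\infty$ with no appeal to $\abs{B}=1$ on $\T$ --- or to perform the standard normal-families reduction to finite Blaschke product data for both $f_1$ and $f_2$, which is what Treil and this paper do.

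The second gap is the step you yourself flag as the crux, and it is not a detail: it is essentially the entire content of Treil's proof, reproduced here as Proposition \ref{CorrectingV_new} and Sections \ref{mainconstruction}--\ref{Visgood}. Since each component of $\{\abs{f_2}<\eta\}$ is simply connected and $f_1$ is zero-free on a neighborhood of it, a branch of $\log f_1$ always \emph{exists} there; the problem is that its imaginary part can oscillate by an amount growing with the degree of the data rather than by $C(\delta)$, and a single Schwarz--Pick estimate per box plus ``absorbing the jumps into the $\db$-datum'' does not control the accumulated winding. What is actually needed is the full package: Carleson regions and slits whose total arclength is a Carleson measure with intensity $C(\delta)$ (Proposition \ref{CarlesonMeasures}), the explicit approximant $V$ with $\abs{\re V}\le C(\delta)$ via the estimate on $\sum_n(1-\abs{\Theta_n}^2)$ (Lemma \ref{TreilLemma}), the Carleson-measure bounds on $\abs{\Delta V}\im z\,dxdy$ and $\abs{\p V}\,dxdy$, and the counting lemmas (Lemmas \ref{lm2}--\ref{lm4}, \ref{lm7}) showing that only $C(\delta)$ slits of each rank meet a given component of the sublevel set. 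Your plan points at the right machinery, but until that machinery is built the theorem is not proved.
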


It is immediately apparent that Theorem \ref{stableHinfty} implies the stable rank of $H^\infty(\D)$ is one.  Questions about the stable rank of some sub-algebras of $H^\infty(\D)$ have been studied by Mortini, \cite{Mortini}.  

It is possible to phrase Treil's result \cite{TreilStable} in the language of Control Theory.  In this language, the result can be viewed as saying that it is possible to stabilize (\textit{in the sense given above}) a linear system (\textit{the Corona data, viewed as a rational function}) via a stable (\textit{analytic}) controller.  But, in applications of Control Theory, the linear systems and transfer functions  have real coefficients, so in this context Treil's result is physically meaningless.  From the point of view of Control Theory, it is important to know if results like Theorem \ref{stableHinfty} hold, but for a more physically meaningful algebra, and serves as the main motivation for this paper.  This paper is interested in questions related to the stable rank of a natural sub-algebra of $H^\infty(\D)$, the real Banach algebra $H^\infty_\R(\D)$.   In particular, does some variant of Theorem \ref{stableHinfty} hold for this algebra? 

First, recall that $H^\infty_\R(\D)$ is the subset of $H^\infty(\D)$ with the additional property of the following symmetry condition:
$$
f(z)=\overline{f(\overline{z})}\quad\forall z\in\D.
$$
An important property that we will use is that $f(z)\in\R$ when $z\in\D\cap\R$.  When we translate between $\D$ and $\C_+$ via the standard confromal map ($z\mapsto\frac{z+i}{z-i}$) this condition takes the following form,
$$
f(z)=\overline{f(-\overline{z})}\quad\forall z\in\C_+.
$$
Here we have that $f(z)\in\R$ when $z\in\C_+\cap i\R$.  These conditions are implying that the functions in $H^\infty_\R(\D)$ (or $H^\infty_\R(\C_+)$) possess a symmetry that is not present  for general $H^\infty(\D)$ (or $H^\infty(\C_+)$) functions.

Carleson's Corona result is inherited by the algebra $H^\infty_\R(\D)$.  More precisely, it is an immediate application of the usual Corona Theorem and the symmetry properties of $H^\infty_\R(\D)$ to show that an 
$n$-tuple $(f_1,\ldots,f_n)\in H^\infty_\R(\D)^n$ is unimodular if and only if it satisfies the Corona condition,
$$
\inf_{z\in\D}\left(\abs{f_1(z)}+\cdots+\abs{f_n(z)}\right)=\delta>0.
$$
Indeed, one direction is immediate, and in the other direction, if we know that
$$
\inf_{z\in\D}\left(\abs{f_1(z)}+\cdots+\abs{f_n(z)}\right)=\delta>0,
$$
then we can find a solution $(g_1,\ldots,g_n)\in H^\infty(\D)^n$.  We then symmetrize the $g_j$ via the operation
$$
\tilde{g}_j(z):=\f{g_j(z)+\overline{g_j(\overline{z})}}{2}.
$$
The $\tilde{g}_j\in H^\infty_\R(\D)$ and will then be the $H^\infty_\R(\D)$ Corona solution we are seeking.

This leads to the main question considered in this paper.  Is Theorem \ref{stableHinfty} true for the algebra $H^\infty_\R(\D)$? Namely, given Corona data $f_1$ and $f_2$ in $H^\infty_\R(\D)$, is there a solution $g_1$ and $g_2$ to the Corona problem with $g_1$ invertible in $H^\infty_\R(\D)$?  This can also be phrased as attempting to show that the stable rank of $H^\infty_\R(\D)$ is one.  

It is easy to see that there is an additional necessary condition.  Suppose that Theorem \ref{stableHinfty} were true for $H^\infty_\R(\D)$ functions, then we shall see that the real zeros of $f_1$ and $f_2$ must intertwine correctly.  Indeed, let $\lambda_1$ and $\lambda_2$ be real zeros of $f_2$.  Then we have
\begin{eqnarray*}
f_1(\lambda_1)g_1(\lambda_1) & = & 1\\
f_1(\lambda_2)g_1(\lambda_2) & = & 1.
\end{eqnarray*}
Now $f_1(\lambda_1)$ and $f_1(\lambda_2)$ must have the same sign at these zeros.  If this were not true, then without loss of generality, suppose that $f_1(\lambda_1)>0>f_1(\lambda_2)$.  Then $g_1(\lambda_1)>0>g_1(\lambda_2)$.  By continuity there will exist a point $\lambda_{12}$, between $\lambda_1$ and $\lambda_2$, with $g_1(\lambda_{12})=0$.  But this contradicts the fact that $g_1^{-1}\in H^\infty_\R(\D)$.  This necessary condition is not quite strong enough for our purposes, we instead need a slightly stronger condition.

First, we need to extend the definition of positive on real zeros.  Given a positive number $\epsilon>0$ we will instead will need that $f_1$ has the same sign on the set of $x\in(-1,1)$ where $\abs{f_2(x)}<\epsilon$.  Note that if $f_1$ has the same sign on this set then we will have that $f_1$ has the same sign on the real zeros of $f_2$.  Repeating and quantifying the argument given above one can see that a stronger necessary condition for a version of Theorem \ref{stableHinfty} to hold for $H^\infty_\R(\D)$ is that for some $\epsilon$ small enough, we have $f_1$ has the same sign on the set $\{x\in (-1,1):|f_2(x)|<\epsilon\}$.  Indeed, we have the following lemma.

\begin{lm}
Let $f_1,f_2\in H^\infty_\R(\D)$ and suppose that there exist $g_1, g_1^{-1}, g_2\in H^\infty_\R(\D)$ such that
$$
f_1g_1+f_2g_2=1\quad\forall z\in\D.
$$
Then, for $\epsilon>0$ sufficiently small we have that $f_1$ has the same sign on the set $\{x\in (-1,1):|f_2(x)|<\epsilon\}$.
\end{lm}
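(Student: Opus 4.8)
The plan is to argue by contradiction, quantifying the continuity argument already sketched in the text just before the lemma statement. Suppose no such $\epsilon$ works; then for every $\epsilon > 0$ there are points $a_\epsilon, b_\epsilon \in (-1,1)$ with $|f_2(a_\epsilon)| < \epsilon$, $|f_2(b_\epsilon)| < \epsilon$, and $f_1(a_\epsilon)$, $f_1(b_\epsilon)$ of opposite sign (or one of them zero, which we may also absorb into this case after a harmless perturbation of $\epsilon$). Since $f_1 g_1 + f_2 g_2 = 1$ on $\D$, and all functions are continuous up to $[-1,1]$ in the sense that they are real-analytic on $(-1,1)$, evaluating at $a_\epsilon$ gives $f_1(a_\epsilon) g_1(a_\epsilon) = 1 - f_2(a_\epsilon) g_2(a_\epsilon)$, so $|f_1(a_\epsilon) g_1(a_\epsilon) - 1| \le \epsilon \norm{g_2}_\infty$; likewise at $b_\epsilon$. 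Thus for $\epsilon$ small enough that $\epsilon \norm{g_2}_\infty < 1/2$, both $f_1(a_\epsilon) g_1(a_\epsilon)$ and $f_1(b_\epsilon) g_1(b_\epsilon)$ lie in $(1/2, 3/2)$, in particular both are strictly positive.

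Next I would use the fact that $g_1^{-1} \in H^\infty_\R(\D)$ to conclude $g_1$ has no zeros on $(-1,1)$, hence (being real-valued and continuous there) $g_1$ has constant sign on $(-1,1)$; call it $\sigma \in \{+1,-1\}$. Then $f_1(a_\epsilon)$ has the sign of $\sigma$ (since $f_1(a_\epsilon) g_1(a_\epsilon) > 0$ forces $f_1(a_\epsilon)$ and $g_1(a_\epsilon)$ to agree in sign), and similarly $f_1(b_\epsilon)$ has the sign of $\sigma$. But this contradicts the assumption that $f_1(a_\epsilon)$ and $f_1(b_\epsilon)$ have opposite signs. This is the whole argument; the contradiction is reached for any single $\epsilon$ with $\epsilon < \frac{1}{2\norm{g_2}_\infty}$, so in fact the conclusion of the lemma holds for every such $\epsilon$.

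The only mild technical point — and the step I would be most careful about — is the passage to boundary values on $(-1,1)$: a priori $f_1, f_2, g_1, g_2$ are only bounded analytic functions on $\D$, so "$f_2(x)$ for $x \in (-1,1)$" must be interpreted correctly. Here one uses that $(-1,1)$ is an interval of the circle where nothing obstructs the reflection: functions in $H^\infty_\R(\D)$ extend real-analytically across $(-1,1)$ by the Schwarz reflection principle (the symmetry $f(z) = \overline{f(\bar z)}$ is exactly the reflection identity), so pointwise values on $(-1,1)$ and their signs are well defined, and the identity $f_1 g_1 + f_2 g_2 = 1$ persists on $(-1,1)$ by analytic continuation. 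Since $g_1$ is invertible in $H^\infty_\R(\D)$, $g_1$ and $g_1^{-1}$ both extend this way and $g_1$ is nonvanishing on $(-1,1)$, which is what legitimizes the constant-sign claim. Once this is set up, everything else is the elementary sign-chasing above.
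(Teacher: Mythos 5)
Your proof is correct, and it is precisely the quantification the paper has in mind: the paper states the lemma immediately after the qualitative sign-chasing argument for real zeros of $f_2$, remarking only that one proves the lemma by ``repeating and quantifying the argument given above'' and providing no formal proof. Your core observation --- fix $\epsilon < \tfrac{1}{2\norm{g_2}_\infty}$, deduce $f_1(x)g_1(x) > \tfrac12$ whenever $|f_2(x)|<\epsilon$, and combine with the fact that $g_1$ is real-valued, nonvanishing, and hence of constant sign on $(-1,1)$ --- is exactly that quantification, and in fact you do not even need the contradiction framing: the argument gives the conclusion directly.

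One minor point worth correcting in your write-up: the interval $(-1,1)$ is not on the boundary of $\D$; it equals $\D\cap\R$ and consists of interior points. The functions $f_1,f_2,g_1,g_2,g_1^{-1}$ are holomorphic at every point of $(-1,1)$ simply because those points lie in $\D$, and the real symmetry $f(z)=\overline{f(\bar z)}$ forces $f(x)\in\R$ for $x\in(-1,1)$ by setting $z=x$ real. So the paragraph about boundary values, Schwarz reflection across $(-1,1)$, and analytic continuation of the Bezout identity is unnecessary --- the identity $f_1g_1+f_2g_2=1$ already holds at interior points, and the sign of $g_1(x)$ is well defined for free. This does not affect the validity of your argument, but it signals a small misreading of the geometry.

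\end{document}
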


Note that if $f_1$ has the same sign on the set $\{x\in (-1,1):|f_2(x)|<\epsilon\}$, then by multiplying the function $f_1$ by $-1$ if necessary we can assume that $f_1$ is positive there.

We also observe an equivalent condition to $f_1$ being positive on the set $x\in(-1,1)$ such that $\abs{f_2(x)}<\epsilon$.  Note that the set
$$
\{x\in\D\cap\R: \abs{f_2(x)}<\epsilon\}=\bigcup_{l}^N Z_l
$$
where $\{Z_l\}_{l}^N$ is a finite collection of disjoint open intervals of $(-1,1)$.  The positivity condition on $f_1$ means that between any two consecutive open intervals $\{Z_l\}$ there are an \textit{even} number of open intervals $\{W_k\}$ such that $f_1$ changes sign, $f_1$ must have a real zero in the interval $W_k$ and there must be an even number of real zeros of $f_1$ between the real zeros of $f_2$.

With this definition, we now state the corrected form of the main Theorem from \cite{W}.  In the original paper \cite{W} the proof of the main result is unfortunately incorrect, however the method of proof is the correct approach.  The problem with the main result in \cite{W} is that the initial hypothesis was that $f_1$ was positive on the real zeros of $f_2$.  The condition that $f_1$ is positive on the zeros of $f_2$, while necessary is unfortunately not strong enough to be sufficient as can be seen by certain examples.  In this version of the paper, we provide the corrections necessary to obtain a true theorm.  The main result of this paper is the following theorem.  

\begin{thm}
\label{Hinfty_Corrected}
Suppose that $f_1,f_2\in H^\infty_\R(\D)$, $\norm{f_1}_\infty,\norm{f_2}_{\infty}\leq 1$.  Assume for some $\epsilon>0$ and small, $f_1$ has the same sign on the set $\{x\in (-1,1):|f_2(x)|<\epsilon\}$ and
$$
\inf_{z\in\D}\left(\abs{f_1(z)}+\abs{f_2(z)}\right)=\delta>0.
$$
Then there exists $g_1$, $g_1^{-1}$, $g_2\in H^\infty_\R(\D)$ with $\norm{g_1}_\infty,\norm{g_2}_\infty,\norm{g_1^{-1}}_\infty\leq C(\delta,\epsilon)$ and
$$
f_1(z)g_1(z)+f_2(z)g_2(z)=1\quad\forall z\in\D.
$$
\end{thm}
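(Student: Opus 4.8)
The plan is to reduce the theorem to the construction of a single real multiplier, and then to build that multiplier by a symmetrized form of the argument behind Theorem~\ref{stableHinfty}, using the sign hypothesis only to defeat an obstruction living on $(-1,1)$. We may assume $f_2\not\equiv 0$, since otherwise $\abs{f_1}\ge\delta$ and $g_1=1/f_1$, $g_2=0$ works. By Carleson's Corona Theorem and the symmetrization $g\mapsto\tfrac12\big(g(z)+\overline{g(\bar z)}\big)$ recalled in the Introduction, fix $\phi_1,\phi_2\in H^\infty_\R(\D)$ with $\norm{\phi_1}_\infty,\norm{\phi_2}_\infty\le C_0(\delta)$ and $f_1\phi_1+f_2\phi_2=1$ on $\D$. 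Every B\'ezout solution then has the form $g_1=\phi_1+f_2h$, $g_2=\phi_2-f_1h$ with $h\in H^\infty_\R(\D)$, and if such a $g_1$ is bounded below it is automatically invertible \emph{in} $H^\infty_\R(\D)$, because $\overline{g_1^{-1}(\bar z)}=g_1^{-1}(z)$. Hence it suffices to produce $h\in H^\infty_\R(\D)$ with $\norm{h}_\infty\le C(\delta,\epsilon)$ and $\inf_{z\in\D}\abs{\phi_1(z)+f_2(z)h(z)}\ge c(\delta,\epsilon)>0$.

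Next I would localize. Pick $\eta=\eta(\delta)\in(0,\epsilon)$ with $\eta\,C_0(\delta)<\tfrac12$ and set $\Omega:=\{z\in\D:\abs{f_2(z)}<\eta\}$. On $\Omega$ we get $\abs{f_1\phi_1-1}=\abs{f_2\phi_2}<\tfrac12$, hence $\abs{f_1}\ge(2C_0)^{-1}$, $\abs{\phi_1}\ge\tfrac12$, and $\abs{\phi_1-1/f_1}\le C_1(\delta)\,\eta$. Since $\eta<\epsilon$, on $\Omega\cap(-1,1)\subset\{x\in(-1,1):\abs{f_2(x)}<\epsilon\}$ the function $f_1$ is positive (replacing $f_1$ by $-f_1$ if needed); as $\phi_1$ is real there and within $C_1(\delta)\eta$ of $1/f_1(x)\ge1$, we get $\phi_1>0$ on $\Omega\cap(-1,1)$ and, by the uniform hyperbolic-derivative bound on $\phi_1$, $\re\phi_1$ bounded below on a fixed hyperbolic neighbourhood of $\Omega\cap(-1,1)$. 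So near $\Omega$ nothing needs to be done: any $h$ that is small on $\Omega$ leaves $g_1\approx\phi_1$ bounded below and positive on $\Omega\cap(-1,1)$. The work is confined to $\{\abs{f_2}\ge\eta\}$, where $\phi_1$ may be small and one must invert $f_2$ through the single analytic real multiplier $h$.

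Then I would construct $h$, following Treil's scheme equivariantly under $z\mapsto\bar z$. Write $\Omega\cap(-1,1)=\bigcup_{l=1}^N Z_l$ (finitely many intervals, as in the Introduction; a standard approximation, e.g.\ replacing $f_2$ by a finite Blaschke product times an outer factor bounded below, reduces to this and must be arranged so $C(\delta,\epsilon)$ does not degrade) and let $W_1,\dots,W_M$ be the complementary subintervals of $(-1,1)$, on each of which $\abs{f_2}\ge\eta$ and so $f_2$ has constant sign. Fix a target unit $u\in H^\infty_\R(\D)$ with controlled norm, bounded below, admitting a bounded branch of $\log u$, with $u\approx\phi_1$ on $\Omega$ and $u$ equal to a positive constant on each $W_k$; the point is that this prescription can be made \emph{globally consistent on $(-1,1)$}, and that is precisely the hypothesis: between two consecutive $Z_l$ the function $f_1$ changes sign an \emph{even} number of times, so the sign of the desired $g_1$ can be taken $+1$ throughout $(-1,1)$, and there is no parity obstruction to a bounded $\log g_1$ --- without the hypothesis one is forced through a zero of $g_1$ on $(-1,1)$, contradicting invertibility, exactly as in the Lemma above. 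One then solves for $g_1$ (equivalently $h=(g_1-\phi_1)/f_2$) by gluing the two local models --- $g_1\approx\phi_1$ on $\Omega$, $g_1\approx u$ on $\{\abs{f_2}\ge\eta\}$ --- through a $\bar\partial$-equation whose right-hand side is a Carleson measure controlled by $\delta$ and $\epsilon$, using the hyperbolic-metric geometry of the zero set of $f_2$; symmetrizing all ingredients keeps $h$ in $H^\infty_\R(\D)$, and the corona-type estimates give $\norm{h}_\infty\le C(\delta,\epsilon)$. Finally one removes the approximation.

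The main obstacle is this last step: producing the single real multiplier $h$, of norm $\le C(\delta,\epsilon)$, that simultaneously respects the B\'ezout relation, the symmetry $z\mapsto\bar z$, and the sign/parity structure on $(-1,1)$. The reduction to finding $h$, the localization near $\Omega$, and the reading of the hypothesis as a parity condition are comparatively soft; the analytic cost sits in the equivariant corona/$\bar\partial$ construction and in keeping the constant uniform through the reduction to finitely many intervals $Z_l$.
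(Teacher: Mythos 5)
Your outline runs along the same track as the paper: reduce to a symmetrized Treil-type construction, isolate the parity structure of $f_1$ between the components of $\{x\in(-1,1):\abs{f_2(x)}<\epsilon\}$ as the operative hypothesis, and build the multiplier through a $\bar\partial$-equation with Carleson-measure data. Your reduction to a single real symmetric $h$ via $g_1=\phi_1+f_2h$, the localization on $\Omega=\{\abs{f_2}<\eta\}$, and the observation that a uniform lower bound on a real symmetric $g_1\in H^\infty_\R(\D)$ automatically gives $g_1^{-1}\in H^\infty_\R(\D)$, are all correct and clean.

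However, the proposal has a genuine gap, and it sits precisely where you flag "the main obstacle": the statement that one "solves for $g_1$ \dots by gluing the two local models through a $\bar\partial$-equation whose right-hand side is a Carleson measure controlled by $\delta$ and $\epsilon$, \dots symmetrizing all ingredients" is not a proof but a description of the thing to be proved. The paper's entire content is the construction of that right-hand side. Concretely: after passing to real symmetric finite Blaschke products, one wants a function $\kappa$ that is analytic, real symmetric, has $\abs{\re\kappa}\le C(\delta,\epsilon)$ on all of $\C_+$, and approximates an appropriate branch of $\log f_1$ on $\{\abs{f_2}<\delta'\}$. This $\kappa$ is produced by first building a non-analytic function $V$ from a Bourgain/Treil Carleson-contour decomposition adapted to the zeros of $f_1$, and then correcting $V$ to be analytic by solving $\bar\partial v=\bar\partial V$ with $v$ bounded and symmetric. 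The delicate and genuinely new point --- the one that was wrong in the earlier version \cite{W} and that your sketch does not engage with --- is that naively symmetrizing Treil's construction under $z\mapsto -\bar z$ fails on the imaginary axis: a symmetric connected Carleson region $\mathcal{R}=-\overline{\mathcal{R}}$ may contain an \emph{odd} number of zeros on $i\R$, and then the Blaschke factor $B_\mathcal{R}$ has opposite signs above and below $\mathcal{R}$ on $i\R$, so no real symmetric branch of $\log B_\mathcal{R}$ exists on $\C_+\setminus\mathcal{R}$. The hypothesis that $f_1$ has constant sign on $\{\abs{f_2}<\epsilon\}\cap(-1,1)$ is used \emph{exactly here}: it forces the odd-parity regions to come in pairs between consecutive components $Z_{l_1},Z_{l_2}$ of $\{\abs{f_2}<\delta'\}\cap i\R$, and one must actively re-engineer the decomposition by joining two such regions (or a region and a point of $\sigma_1\cap i\R$, or two points of $\sigma_1\cap i\R$) with a slit along $i\R$, so that the combined Blaschke factor has an even number of axis zeros and therefore a real symmetric logarithm. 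Your remark that "the sign of the desired $g_1$ can be taken $+1$ throughout $(-1,1)$, and there is no parity obstruction to a bounded $\log g_1$" is the correct heuristic, but it does not by itself yield the pairing construction nor the verification that the resulting measures $\abs{\Delta V}\im z\,dxdy$ and $\abs{\partial V}\,dxdy$ are Carleson with intensity $C(\delta,\epsilon)$.

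A second, smaller omission: your gluing picture misses the interpolation step. In the paper, once $\kappa$ is in hand, one interpolates the now-bounded real symmetric $\log(f_1e^{-\kappa})$ at the zero set of $f_2$ by some $h\in H^\infty_\R(\C_+)$ with $\norm{h}_\infty\le C(\delta,\epsilon)$ (this is the symmetrized Carleson interpolation, Theorem~\ref{InterpolateLogReal_corrected}); then $e^h-f_1e^{-\kappa}$ is divisible by $f_2$ and $g_1:=e^{-(\kappa+h)}$ is automatically bounded, bounded below, and invertible in $H^\infty_\R$. This mechanism gives the divisibility of $g_1-\phi_1$ by $f_2$ that your parametrization $g_1=\phi_1+f_2h$ requires, and it is cleaner than trying to force a lower bound on $g_1$ directly from a glued target $u$: building $g_1$ as an exponential of a real symmetric function with bounded real part sidesteps the danger of $g_1$ passing through zero between the local models. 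In short, the architecture is right, but the proof lives in the two pieces you skipped: the symmetric (and parity-repaired) Carleson contour construction of $V$, and the interpolation step that turns boundedness of the corrected logarithm into the Bézout identity.
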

From now one, with out loss of generality, we will suppose that $f_1$ is positive on the set $\{x\in (-1,1):|f_2(x)|<\epsilon\}$.  
The original statement of the Theorem in \cite{W} did not take this parameter $\epsilon$ into consideration.

We remark that these results transfer immediately to analogous statements $H^\infty_\R(\C_+)$ via the standard conformal mapping between $\C_+$ and $\D$.  Throughout the paper, the adjective \textit{real symmetric} is used to indicate that the function in question satisfies the symmetry condition 
$$
f(z)=\overline{f(\overline{z})}\quad\forall z\in\D
$$
if the function is defined on the unit disc $\D$.  Or, 
$$
f(z)=\overline{f(-\overline{z})}\quad\forall z\in\C_+
$$
if the function is defined in $\C_+$.  This will always be clear from the context.  We also will work with either the upper half-plane or the disc, and will transfer the problem to either domain, depending upon where the problem is easiest to work with.  Finally, we remark that $C(\delta,\epsilon)$ will denote a constant depending on $\delta$ and $\epsilon$, which can change from line to line.\\

\noindent
\textbf{Acknowledgements}
The author thanks Raymond Mortini for providing interesting counterexamples to the original version  of Theorem which showed that, quite surprisingly, the constants $C(\delta)$ bounding the norms of the solutions  $(g_1,g_2)$ and the inverse $g_1^{-1}$ (in $H^\infty_\R(\D)$) to the Bezout equation $g_1f_1+g_2f_2 =1$ not only depends on $\delta$ as claimed, but also must depend on a stronger condition about the positivity, namely the parameter $\epsilon$.  The author also thanks Kalle Mikkola for a similar observation.  Finally, the author thanks Sergei Treil for numerous helpful discussions.

\section{Idea of the Proof}
\label{ideaofproof}

The method of proof is inspired by Treil's proof in \cite{TreilStable}, but must be suitably modified.  A key component of the modifications is to exploit the symmetry properties of $H^\infty_\R(\D)$ functions.  Additionally, it is important to include the condition about $f_1$ being positive on the set $\{x\in (-1,1):|f_2(x)|<\epsilon\}$ in an appropriate way.

It is straightforward to demonstrate that it is enough to prove Theorem \ref{Hinfty_Corrected} only in the case of real symmetric rational functions whose zeros satisfy the additional condition about positivity on certain zeros.  While not immediately clear, it is also true that one can show that it is sufficient to prove Theorem \ref{Hinfty_Corrected} in the case of real symmetric finite Blaschke products possessing this condition as well.  This can be seen from the appropriate modifications of the original proof by Carleson and the proof by Treil, \cite{TreilStable}.  Thus, we specialize to the situation where we have real symmetric simple Blaschke products with the positivity condition on real zeros.  

To prove Theorem \ref{Hinfty_Corrected} we begin by solving an interpolation problem.  We now work in the upper half plane where the problem is easier to explain.  Suppose that $f_1$ and $f_2$ are real symmetric finite simple Blaschke products, which satisfy the condition that $f_1$ is positive on the set $\{y\in\R :|f_2(iy)|<\epsilon\}$ and 
$$
\inf_{z\in\C_+}\left(\abs{f_1(z)}+\abs{f_2(z)}\right)=\delta>0.
$$
Note that the function $f_1(z)$ satisfies $\abs{f_1(z)}\geq\f{\delta}{2}$ on the set $\{z\in\C_+:\abs{f_2(z)}<\f{\delta}{2}\}$.  Further, by the maximum principle, observe that each component of this set is simply connected.  Since $f_1$ is rational, we have that $\log f_1$ is bounded on this set, and moreover, because  $f_1$ is positive on the set $\{y\in \R:|f_2(iy)|<\epsilon\}$ and the Corona condition holds there is a well defined branch of $\log f_1$ with respect to the symmetric set $\{z\in\C_+:\abs{f_2(z)}<\delta'\}$, where $\delta'$ is sufficiently small compared with $\delta$ and $\epsilon$, that can be chosen with the additional property that $\log f_1(z)=\overline{\log f_1(-\overline{z})}$.

With this in hand, we now use the following Theorem about interpolation of functions.  

\begin{thm}
\label{InterpolateLogReal_corrected}
Let $B$ be a real symmetric Blaschke product with simple zeros, and let $\sigma$ denote its zero set.  Given $0<\gamma<1$, let $\vf$ be a real symmetric analytic function on the set $\{z:\abs{B(z)}<\gamma\}$ and satisfying $\abs{\vf(z)}\leq 1$ there.  Then there exists a real symmetric function $h\in H_\R^\infty(\C_+)$ such that
$$
\vf(z)=h(z)\quad\forall z\in\sigma.
$$
Moreover, $\norm{h}_\infty\leq C(\gamma)\norm{\varphi}_\infty$.
\end{thm}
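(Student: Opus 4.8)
The key point is that interpolation on the zero set $\sigma$ of a Blaschke product $B$ at points where $|B|$ is small is governed by the standard $H^\infty$ interpolation theory — specifically, by Carleson's embedding theorem / the fact that $\sigma$ is an interpolating sequence (or a finite union of such, with uniformly bounded constants depending only on $\gamma$), since $\inf_n \prod_{k\neq n}\left|\frac{z_n - z_k}{z_n - \bar z_k}\right| \geq c(\gamma) > 0$ whenever $|B(z_n)| = 0$ and one controls things on $\{|B|<\gamma\}$. Actually the cleanest route: because $\varphi$ is analytic and bounded by $1$ on the open set $\Omega := \{|B| < \gamma\}$, and $\sigma \subset \Omega$, we may first produce \emph{some} $\tilde h \in H^\infty(\C_+)$ with $\tilde h|_\sigma = \varphi|_\sigma$ and $\|\tilde h\|_\infty \leq C(\gamma)$. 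This is exactly the content of the non-symmetric interpolation lemma used by Treil in \cite{TreilStable} (it follows from the Carleson interpolation theorem applied to $\sigma$, whose interpolation constant is bounded in terms of $\gamma$ alone, together with the observation that $\varphi$ restricted to $\sigma$ is a bounded sequence, and in fact one can do better and match $\varphi$ to first order if needed, but plain interpolation suffices here).

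The second — and genuinely new — step is the \emph{symmetrization}. We are given that $B$ is real symmetric and $\varphi$ is real symmetric on $\Omega$, so $\Omega$ is symmetric under $z \mapsto -\bar z$ and $\sigma$ is invariant under this involution. I would then set
\[
h(z) := \frac{\tilde h(z) + \overline{\tilde h(-\bar z)}}{2}.
\]
This is again in $H^\infty_\R(\C_+)$ with $\|h\|_\infty \leq \|\tilde h\|_\infty \leq C(\gamma)$, exactly as in the symmetrization argument sketched in the Introduction for the Corona solutions. The point is that $h$ still interpolates: if $w \in \sigma$ then $-\bar w \in \sigma$ as well, and $\tilde h(-\bar w) = \varphi(-\bar w) = \overline{\varphi(w)}$ (using real symmetry of $\varphi$ on $\sigma \subset \Omega$), so $\overline{\tilde h(-\bar w)} = \varphi(w)$; hence $h(w) = \tfrac12(\varphi(w) + \varphi(w)) = \varphi(w)$. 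Thus $h = \varphi$ on $\sigma$, $h$ is real symmetric, and the norm bound is preserved.

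So the structure is: (1) invoke the classical bounded interpolation result on $\sigma$ (interpolation constant controlled by $\gamma$ via Carleson) to get a non-symmetric solution $\tilde h$; (2) average with $\overline{\tilde h(-\bar\cdot)}$ to land in $H^\infty_\R(\C_+)$ without losing the interpolation property or the norm bound. I expect step (1) to be the only place requiring real work, and even there it is essentially a citation: the subtlety is making sure the interpolation constant for $\sigma$ depends only on $\gamma$ — this is where one uses that $|B| < \gamma$ forces the separation constant $\inf_n\prod_{k\neq n}\left|\tfrac{z_n-z_k}{z_n-\bar z_k}\right|$ to be bounded below by a function of $\gamma$, equivalently that $\sigma$ carries a Carleson measure with constant controlled by $\gamma$. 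The symmetrization in step (2) is formal once one notes that the involution $z\mapsto -\bar z$ preserves $\C_+$, preserves $\sigma$, and intertwines the real-symmetry conditions for $\varphi$ and for members of $H^\infty_\R(\C_+)$.
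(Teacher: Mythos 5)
Your proof is correct and follows exactly the paper's approach: invoke Carleson's interpolation theorem to obtain a non-symmetric $H^\infty(\C_+)$ interpolant with norm bound $C(\gamma)$, then symmetrize via $h(z)=\tfrac12\bigl(\tilde h(z)+\overline{\tilde h(-\bar z)}\bigr)$. You have in fact filled in the verification (that the symmetrization still interpolates, using invariance of $\sigma$ under $z\mapsto -\bar z$ and the real symmetry of $\varphi$) that the paper leaves to the reader.
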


We remark that it suffices to have $\varphi$ symmetric only on the zeros of the function $B$, however, we state the result in a slightly stronger form.  
To prove Theorem \ref{InterpolateLogReal_corrected}, one simply takes the resulting function that exists in $H^\infty(\C_+)$ (as given by Carleson in \cite{Carleson}), call it $l(z)$, and then symmetrize it by setting $h(z)=\frac{l(z)+\overline{l(-\overline{z})}}{2}$.  Since $l$ does the interpolation and everything is symmetric, the result then follows.

We apply this Theorem with $B=f_2$ and $\varphi=\log f_1$ and $\delta'$ a small number compared to both $\delta$ and $\epsilon$.  Since $f_1$ is rational, we have a bounded branch of the logarithm $\log f_1$ on the set 
$$
\{z:\abs{f_2(z)}<\delta'\}.
$$
Additionally, by choosing $\delta'=\delta'(\epsilon)>0$ small enough we have $f_1$ is positive on the set $\{y\in\R :|f_2(iy)|<\epsilon\}$ so we can interpolate the logarithm of $f_1$ with a function $h\in H^\infty_\R(\C_+)$ with $\norm{h}_\infty\leq C(\delta,\epsilon)\norm{\log f_1}_\infty$, and
$$
e^{h(z)}=f_1(z)\quad\textnormal{for all $z$ in the zero set of }f_2.
$$
The function $e^h$ is invertible in $H^\infty_\R(\C_+)$ and there is a function $G\in H^\infty_\R(\C_+)$ with $e^h=f_1+f_2G$.  

Unfortunately, this is not enough to conclude the proof of the theorem.  If $\log f_1$ were bounded on $\{z\in \C_+:\abs{f_2(z)}<\delta'\}$ by a constant only depending on $\delta$ and $\epsilon$ and \textit{not} on the degrees of $f_1$ and $f_2$, we would be done.  However, this is not generally true, so we need a method to overcome this difficulty.  To do this we will find an analytic function $\kappa$ that is real symmetric and  will ``correct'' the function $f_1$.  To correct the function $f_1$ it suffices to prove the following proposition, which has been corrected from \cite{W}.

To find the correcting function, we will prove the following propositions.  The point where the paper \cite{W} was incorrect in was the analogue of Proposition \ref{Correcting}, which was not strong enough to conclude that there was a bounded logarithm in $H^\infty_\R(\D)$. In that version of the proposition it was only assumed that $f_1$ is positive on the real zeros of $f_2$, i.e., setting $\epsilon=0$.  With this change the argument is now correct.

\begin{prop}
\label{Correcting}
Let $p,q\in H^\infty_\R(\C_+)$ be finite simple real symmetric Blaschke products with $\inf_{z\in\C_+}(\abs{p(z)}+\abs{q(z)})=\delta>0$ such that for some $\epsilon>0$ $p$ has the same sign on the set $\{y\in \R:|q(iy)|<\epsilon\}$.  Then there exists an analytic function $\kappa$ with the following properties:
\begin{itemize}
\item[(i)] $\abs{\re \kappa(z)}\leq C(\delta,\epsilon)\quad\forall z\in\C_+$;\\
\item[(ii)] $\abs{\log p(z)-\kappa(z)}\leq C(\delta,\epsilon)$ for all $z$ in $\{z\in\C_+:\abs{q(z)}<\delta'\}$ for some $0<\delta'$ sufficient small with respect to $\delta$ and $\epsilon$ and an appropriate branch of $\log p$ on the set $\{z\in\C_+:\abs{q(z)}<\delta'\}$;\\
\item[(iii)] $\kappa(z)=\overline{\kappa(-\overline{z})}\quad\forall z\in\C_+$.
\end{itemize}
\end{prop}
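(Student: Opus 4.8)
The plan is to reduce Proposition~\ref{Correcting} to the construction of one invertible, real symmetric function, and then to produce that function by making Treil's stable rank argument symmetric. I claim (i)--(iii) follow once we have a $g\in H^\infty_\R(\C_+)$ which is invertible with $\norm{g}_\infty,\norm{g^{-1}}_\infty\le C(\delta,\epsilon)$, is positive on $i\R$, and satisfies $g=p+qG$ for some $G\in H^\infty_\R(\C_+)$ with $\norm{G}_\infty\le C(\delta,\epsilon)$. Indeed $\C_+$ is simply connected and $g$ is zero free, so $\kappa:=\log g$ is well defined; the branch may be taken real symmetric since $g$ is real symmetric and $>0$ on $i\R$, and then $\re\kappa=\log\abs{g}$ is bounded by $C(\delta,\epsilon)$ and $\kappa(z)=\overline{\kappa(-\bar z)}$, giving (i) and (iii). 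On $\{\abs{q(z)}<\delta'\}$ the corona condition gives $\abs{p}\ge\delta-\delta'$, so $\abs{g-p}=\abs{q}\,\abs{G}<\delta'C(\delta,\epsilon)<\tfrac12(\delta-\delta')$ once $\delta'$ is small relative to $\delta,\epsilon$; hence $\abs{g/p-1}<\tfrac12$ there, $\log(g/p)$ is well defined of modulus $\le C$, and $\log p:=\kappa-\log(g/p)$ is a symmetric branch of $\log p$ on that set with $\abs{\log p-\kappa}\le C(\delta,\epsilon)$, which is (ii).

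To build $g$ I first record the geometry. As $q$ is a finite Blaschke product, $\abs{q}=1$ on $\R$, so for $0<\rho<1$ the set $\Omega_\rho:=\{z\in\C_+:\abs{q(z)}<\rho\}$ has compact closure in the open half plane; by the maximum principle each component of $\Omega_\rho$ is simply connected, and $\abs{p}\ge\delta-\rho$ there, so $p$ is zero free on each component. The set $\Omega_\rho$ is symmetric under $z\mapsto-\bar z$, and a connected symmetric open set meets $i\R$; hence each component is either symmetric (and meets $i\R$) or is mirror to a distinct component. Now apply the half plane form of Treil's Theorem~\ref{stableHinfty} to $p,q$: there are $h_1,h_1^{-1},h_2\in H^\infty(\C_+)$ of norm $\le C(\delta)$ with $ph_1+qh_2=1$, so $g_0:=h_1^{-1}$ is invertible with $\norm{g_0^{\pm1}}_\infty\le C(\delta)$ and $g_0=p+qG_0$, $\norm{G_0}_\infty\le C(\delta)$. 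This $g_0$ is not real symmetric, and replacing it by $\tfrac12(g_0(z)+\overline{g_0(-\bar z)})$ can destroy invertibility, so the symmetrization must be done differently.

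Put $\widetilde g_0(z):=\overline{g_0(-\bar z)}$; it is invertible with the same bounds and, since $p,q$ are real symmetric, $\widetilde g_0=p+q\widetilde G_0$. Then $g_0\widetilde g_0$ is real symmetric, invertible with controlled norms, equal to $\abs{g_0(iy)}^2>0$ on $i\R$, and $g_0\widetilde g_0=p^2+qR$ with $\norm{R}_\infty\le C(\delta)$. Being zero free on $\C_+$ it has a square root $h:=\exp\bigl(\tfrac12\log(g_0\widetilde g_0)\bigr)$, which (taking the branch of $\log(g_0\widetilde g_0)$ real on $i\R$) is real symmetric, positive on $i\R$, invertible with $\norm{h^{\pm1}}_\infty\le C(\delta)$, and on each component $\Omega_j$ of $\Omega_\eta$, with $\eta$ small relative to $\delta$, satisfies $h=\sigma_j\,p\,\sqrt{1+qR/p^2}$ for a sign $\sigma_j\in\{\pm1\}$ constant on $\Omega_j$ and equal on mirror pairs. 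The crucial point is that $\sigma_j=+1$ on every symmetric component: such a component meets $i\R$ in intervals where $\abs{q}<\eta<\epsilon$, hence where $p>0$ by the positivity hypothesis, while $h>0$ there. (This is exactly where positivity on all of $\{y:\abs{q(iy)}<\epsilon\}$, rather than merely on the zeros of $q$, is used---a symmetric component of $\Omega_\eta$ need not contain a zero of $q$ on $i\R$---and this is the gap in \cite{W}.) Finally correct the signs: let $\varphi$ be a real symmetric function on $\Omega_\gamma$, $\gamma:=\eta$, constant on each component, equal to $0$ wherever $\sigma_j=+1$ and to $i$, resp.\ $-i$, on the two members of each mirror pair with $\sigma_j=-1$, so $\abs{\varphi}\le1$; by Theorem~\ref{InterpolateLogReal_corrected} interpolate it by $\psi\in H^\infty_\R(\C_+)$ with $\psi=\varphi$ on the zero set of $q$ and $\norm{\psi}_\infty\le C(\gamma)=C(\delta,\epsilon)$; set $s:=e^{\pi\psi}\in H^\infty_\R(\C_+)$, invertible with $\norm{s^{\pm1}}_\infty\le C(\delta,\epsilon)$, positive on $i\R$, and with $s=\sigma_j$ at the zeros of $q$ in $\Omega_j$. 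Then $g:=hs$ has $g(z_k)=\sigma_j^2\,p(z_k)=p(z_k)$ at every zero $z_k$ of $q$, so $g-p$ is divisible by $q$ and $G:=(g-p)/q\in H^\infty_\R(\C_+)$ has $\norm{G}_\infty=\norm{g-p}_\infty\le C(\delta,\epsilon)$ since $\abs{q}=1$ on $\R$; and $g$ is invertible, real symmetric, positive on $i\R$, with controlled norms, so $\kappa:=\log g$ finishes the proof.

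The substantive classical input is Treil's construction of $g_0$ with estimates independent of the degrees of $p$ and $q$; the body of the paper reproves this by the $\db$ and interpolation methods of Carleson and Treil rather than quoting it. The genuinely new obstacle is compatibility with the symmetry: the square root device reduces it to the finitely many signs $\sigma_j$, and disposing of those is what forces the positivity hypothesis into its strong, $\epsilon$-dependent form, the constant $C(\delta,\epsilon)$ acquiring its $\epsilon$-dependence through the requirement $\eta<\epsilon$ and hence through the interpolation constant $C(\gamma)$.
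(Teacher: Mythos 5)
Your proposal is correct, but it takes a genuinely different route from the paper. The paper proves Proposition~\ref{Correcting} by way of Proposition~\ref{CorrectingV_new}: it constructs an explicit non-analytic function $V$ by a symmetrized version of the Bourgain--Treil Carleson-contour construction (Sections~\ref{mainconstruction}--\ref{Visgood}), then removes the non-analytic part by solving a $\pbar$-equation with Theorem~\ref{realdbaronH}, setting $\kappa=V-v$. You instead treat Treil's non-symmetric Theorem~\ref{stableHinfty} as a black box: starting from an invertible $g_0=p+qG_0$ in $H^\infty(\C_+)$ with controlled norms, you pass to the real symmetric product $g_0\widetilde g_0=p^2+qR$, take its real symmetric square root $h$ (using the simple connectivity of $\C_+$ and positivity of $g_0\widetilde g_0$ on $i\R$), observe that $h/p$ equals $\sigma_j\sqrt{1+qR/p^2}$ with $\sigma_j\in\{\pm1\}$ constant on each component $\Omega_j$ of $\{\abs{q}<\eta\}$, show that the $\epsilon$-positivity hypothesis forces $\sigma_j=+1$ on every symmetric $\Omega_j$ (since such a component must meet $i\R$ where $p>0$ and $h>0$), and correct the remaining $\sigma_j=-1$ on mirror pairs with $s=e^{\pi\psi}$ produced by Theorem~\ref{InterpolateLogReal_corrected}. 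Then $g=hs$ is invertible, real symmetric, positive on $i\R$, and $g=p+qG$ with $q\mid(g-p)$ because $g$ agrees with $p$ at every (simple) zero of $q$; the function $\kappa=\log g$ then satisfies (i)--(iii) by elementary estimates. Your argument is shorter and isolates cleanly exactly where the $\epsilon$-strengthened positivity hypothesis enters (ruling out $\sigma_j=-1$ on symmetric components, which cannot be fixed by the interpolation device), whereas the paper rebuilds the whole Treil machinery in symmetric form and hence is essentially self-contained modulo the $\pbar$-solution theorem. Your route, by contrast, imports the full non-symmetric Treil theorem; this is legitimate and non-circular since that result is proved independently, and in fact your construction yields the finite-Blaschke-product case of Theorem~\ref{Hinfty_Corrected} directly ($g_1=g^{-1}$, $g_2=Gg^{-1}$), so it also bypasses the logarithm-interpolation step that the paper uses in Section~\ref{ideaofproof} to pass from $\kappa$ to the Bezout solution.
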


To find $\kappa$ we will construct an auxiliary  function $V$.
\begin{prop}
\label{CorrectingV_new}
Let $p,q\in H^\infty_\R(\C_+)$ be finite simple real symmetric Blaschke products with $\inf_{z\in\C_+}(\abs{p(z)}+\abs{q(z)})=\delta>0$ such that for some $\epsilon>0$ $p$ has the same sign on the set $\{y\in \R:|q(iy)|<\epsilon\}$.  Then there exists a function $V$ with the following properties:
\begin{itemize}
\item[(i)] $\abs{\re V(z)}\leq C(\delta,\epsilon)\quad\forall z\in\C_+$;\\
\item[(ii)] $\abs{\log p(z)-V(z)}\leq C(\delta,\epsilon)$ for all $z$ in $\{z\in\C_+:\abs{q(z)}<\delta'\}$ for some $0<\delta'$ sufficient small with respect to $\delta$ and $\epsilon$ and an appropriate branch of $\log p$ on the set $\{z\in\C_+:\abs{q(z)}<\delta'\}$;\\
\item[(iii)] $V(z)=\overline{V(-\overline{z})} \quad\forall z\in\C_+$;\\
\item[(iv)] some conditions to guarantee the existence of a bounded solution $v$ on the entire upper half-plane $\C_+$ of the equation $\pbar v=\pbar V$, in particular:\\
\begin{itemize}
\item[(a)] $\abs{\Delta V(z)} \im z\,dxdy$ is a Carleson measure with intensity $C(\delta,\epsilon)$;\\
\item[(b)] $\abs{\p V(z)}\,dxdy$ is a Carleson measure with intensity $C(\delta,\epsilon)$;\\
\item[(c)] $\abs{\Delta V(z)}\leq\f{C(\delta,\epsilon)}{(\im z)^2}\quad\forall z\in\C_+$.
\end{itemize}
\end{itemize}
\end{prop}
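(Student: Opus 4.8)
The plan is to transfer to $\C_+$ and produce $V$ as a smooth cut-off of the branch of $\log p$ living on a sublevel set of $\abs q$, the cut-off taken along a carefully chosen symmetric ``Carleson contour''. For $0<t<1$ write $\Omega_t:=\{z\in\C_+:\abs{q(z)}<t\}$. Since $q$ is a finite Blaschke product, $\abs q=1$ on $\R$ and $\abs q\to1$ at infinity, so each $\Omega_t$ is compactly contained in $\C_+$; and the maximum principle for $q$ shows each component of $\Omega_t$ is simply connected. Fix $\delta'$ (and a slightly larger $\delta''$) small compared with $\delta$ and $\epsilon$; the Corona hypothesis then gives $\abs p\ge\delta/2$ on $\Omega_{\delta''}\supset\Omega_{\delta'}$, so on each component of $\Omega_{\delta''}$ the function $p$ is zero-free with $\log\abs p\in[\log(\delta/2),0]$. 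The first task is to fix a \emph{symmetric} branch of $\log p$ on $\Omega_{\delta''}$, that is $\log p(z)=\overline{\log p(-\overline z)}$: the set $\Omega_{\delta''}$ is invariant under $z\mapsto-\overline z$, the components not meeting $i\R^+$ are interchanged in pairs (so define $\log p$ on one member of each pair and reflect), and on a component meeting $i\R^+$ one needs $\log p$ real on $\Omega_{\delta''}\cap i\R^+$. This last requirement is exactly where the hypothesis that $p$ has a constant sign on the \emph{whole} set $\{y>0:\abs{q(iy)}<\epsilon\}$ enters — equivalently, in the reformulation given in the introduction, that there are an even number of real zeros of $p$ between consecutive real zeros of $q$ — and choosing $\delta''=\delta''(\delta,\epsilon)$ small enough that it can be carried out on every such component is a short but essential argument. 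This is precisely the point that requires $\epsilon>0$ rather than $\epsilon=0$, and where the proof of \cite{W} is repaired.

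With a symmetric branch of $\log p$ on $\Omega_{\delta''}$ in hand, $V$ will have the form $V:=\chi\cdot\log p$, extended by $0$ off $\operatorname{supp}\chi$, where $\chi$ is a smooth real symmetric cut-off with $\chi\equiv1$ on $\Omega_{\delta'}$ and $\operatorname{supp}\chi\subset\Omega_{\delta''}$, adapted to the contour below. Properties (i)--(iii) are then immediate: since $\chi$ is real, $\re V=\chi\log\abs p$ is supported in $\Omega_{\delta''}$ with $\abs{\re V}\le\log(2/\delta)\le C(\delta)$, giving (i); $V=\log p$ on $\Omega_{\delta'}$ gives (ii), with constant $0$ there; and (iii) follows from the symmetry of $\chi$ and of the branch. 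Since $\log p$ is analytic near $\operatorname{supp}\chi$,
$$
\pbar V=(\pbar\chi)\,\log p,\qquad \Delta V=(\Delta\chi)\,\log p+4(\pbar\chi)\,\f{p'}{p},
$$
both supported in the collar $\operatorname{supp}\chi\setminus\Omega_{\delta'}$. Hence (iv) reduces to Carleson estimates, with intensity depending only on $\delta$ and $\epsilon$, for $\bigl(\abs{\Delta\chi}\,\abs{\log p}+\abs{\nabla\chi}\,\abs{p'/p}\bigr)\im z\,dx\,dy$ and for $\abs{\nabla\chi}\,\abs{p'/p}\,dx\,dy$, together with the pointwise bound $\abs{\Delta\chi}\,\abs{\log p}+\abs{\nabla\chi}\,\abs{p'/p}\le C(\delta,\epsilon)(\im z)^{-2}$.

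The main obstacle is this verification, and a naive collar around $\p\Omega_{\delta'}$ fails: on a component of $\Omega_{\delta'}$ both $\abs{\log p}$ (through $\arg p$) and $(\im z)\abs{p'/p}$ can be as large as a multiple of $\deg p$, so $\chi$ cannot simply transition on the scale $\im z$ across $\p\Omega_{\delta'}$. Here I would follow, and symmetrize, the Carleson-contour constructions of \cite{Carleson} and \cite{TreilStable}: build a region $\Omega$ with $\Omega_{\delta'}\subset\Omega\subset\Omega_{\delta''}$, invariant under $z\mapsto-\overline z$, whose boundary $\p\Omega$ lies at bounded hyperbolic distance from $\Omega_{\delta'}$ but on a fixed hyperbolic neighbourhood of which $\abs{\log p}\le C(\delta,\epsilon)$ — whence, by Cauchy's estimate, $(\im z)\abs{p'/p}\le C(\delta,\epsilon)$ there as well — and such that the tube of width $\sim\im z$ about $\p\Omega$ meets each Carleson box over an interval $I$ in a set of ``one-dimensional mass'' $\le C(\delta,\epsilon)\abs I$. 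Such an $\Omega$ is produced by a stopping-time argument on the components of $\Omega_t$ for $t\in[\delta',\delta'']$, using the geometry of the sublevel sets of the Blaschke product $q$ and the Corona inequality $\abs p+\abs q\ge\delta$ to transfer control from $q$ to $p$ along the contour. One then takes $\chi\equiv1$ on $\Omega$, $\chi\equiv0$ outside a width-$\sim\im z$ tube about $\p\Omega$, and $\chi$ symmetric; for this $\chi$, (iv)(c) is the pointwise bound above, and (iv)(a),(b) follow by summing the collar contributions over the arcs of $\p\Omega$. I expect the contour construction together with this Carleson summation, carried out so as to respect the symmetry $z\mapsto-\overline z$ throughout, to be the crux; (i)--(iii) and the branch construction are comparatively routine. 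Finally, granted $V$ as in (i)--(iv), Proposition \ref{Correcting} follows: by (iv) the equation $\pbar v=\pbar V$ has a solution with $\norm v_\infty\le C(\delta,\epsilon)$ (a $\pbar$-estimate of Wolff type), which we symmetrize by replacing $v$ with $\tfrac12\bigl(v(z)+\overline{v(-\overline z)}\bigr)$, and $\kappa:=V-v$ is analytic and satisfies (i)--(iii) of Proposition \ref{Correcting}.
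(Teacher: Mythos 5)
Your branch construction on $\Omega_{\delta''}\cap i\R$ using the positivity hypothesis, and the final symmetrization argument, are on the right track and align with the paper. But the central architecture $V:=\chi\cdot\log p$, with $\chi$ a single cut-off along a contour $\p\Omega$, has a genuine gap that cannot be repaired by a better choice of $\Omega$, and this is precisely the obstacle that forces the very different construction in the paper and in \cite{TreilStable}.

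The problem is the imaginary part of the branch. On a connected component $\mathcal O$ of $\Omega$ you fix a branch of $\log p$; the real part $\log\abs p$ is indeed bounded by $\log(2/\delta)$, but the imaginary part $\arg p$ can oscillate along $\p\mathcal O$ by an amount that grows with $\deg p$, regardless of how the contour is chosen inside $\Omega_{\delta''}$. The reason is a mismatch in decay rates: for $z\in\p\mathcal O$ a Blaschke factor $b_a$ with $\abs{b_a(z)}\geq\delta/2$ contributes $\sim\f{\im a\,\im z}{\abs{z-\overline a}^2}$ to $-\log\abs{p(z)}$ (quadratic decay, summable to $\log(2/\delta)$), but contributes $\sim\f{\im a}{\abs{z-\overline a}}$ to $\arg p(z)$ (linear decay, which is not summable). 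Concretely, with $a_k=10k+i$ for $k=1,\dots,n$ and $z=x+i/10$ on the segment $0\le x\le 10n$ one has $\abs{p(z)}\geq c>0$ uniformly in $n$, yet $\arg p$ runs from $\sim+\ln n$ at $x=0$ to $\sim-\ln n$ at $x=10n$. A $q$ can be cooked up so that $\Omega_{\delta''}$ is a thin neighbourhood of this segment, so no stopping time on sublevel sets of $q$ can move $\p\Omega$ away from the zeros of $p$. Thus you cannot achieve $\abs{\log p}\le C(\delta,\epsilon)$ on any fixed hyperbolic neighbourhood of $\p\Omega$; consequently $\pbar V=(\pbar\chi)\log p$ has magnitude $\sim\ln(\deg p)/\im z$ on a non-negligible set, and no Carleson bound independent of $\deg p$ can hold for (iv).

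The paper does \emph{not} cut off a single global branch. Instead it runs the stopping time on $p$ (not $q$), decomposing $p$ into sub-Blaschke products $B_{\mathcal R}$ and isolated factors $b_a$, and sets $V=\sum V_{\mathcal R}+\sum V_a+\cdots$, where each summand is a mollified, locally defined branch of $\log$ of one sub-product, vanishing inside its own small disc or Carleson region and equal to that branch outside a thin system of slits. Each summand individually has $\abs{\re V_\bullet}\le\min\{\log(1/\delta'),\log\abs{B_\bullet}^{-1}\}$ \emph{and} bounded imaginary part; the real parts are summed using Treil's $\sum(1-\abs{\Theta_n}^2)\le C$ lemma to get (i), and the counting lemmas on slits and regions (Lemmas \ref{lm2}--\ref{lm4}) give the Carleson bounds in (iv) because the $\pbar$-supports of the summands have bounded overlap at every scale. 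This decomposition of $\log p$ into locally bounded pieces is the essential ingredient your proposal is missing, and it is exactly the point that the paper identifies when it explains why ``$\log f_1$ bounded'' would finish the proof but is ``not generally true.''

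Two smaller remarks. First, the positivity hypothesis is used in the paper not only to make the branch real on $i\R$ inside a single component, but to \emph{pair} components $\mathcal R$ (and isolated real zeros $a\in\sigma_1\cap i\R$) that carry an odd number of zeros of $p$ on $i\R$, so that each composite sub-Blaschke product has an even number of real zeros and therefore admits a real-symmetric branch of its logarithm on each side; a single cut-off $\chi$ obscures that this pairing is necessary. Second, the target estimate in (ii) is $\abs{\log p - V}\le C(\delta,\epsilon)$, not exact equality; the paper needs this slack precisely because $V$ is assembled from independently chosen branches, and proving it requires the rank-counting argument with Hall's lemma (Lemmas \ref{HallLemma} and \ref{lm7}), not just $\chi\equiv 1$ on $\Omega_{\delta'}$.
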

\noindent
For simplicity, we will assume that $p$ is positive on the set $\{y\in\R:|q(iy)|<\epsilon\}$.  We can always reduce to this case by multiplying by $-1$.

We remark that it is always possible to construct a symmetric function $V$ from the problem.  This simply follows from the symmetrization of the proof in Treil \cite{TreilStable} and that the data $p$ and $q$ has this symmetry property.  However, we remark that the positivity condition on $p$ and $q$ is necessary to  to construct the symmetric branch of logarithm this correcting function $V$ will approximate.  It was here that the author made a mistake in the previous version of \cite{W}.

Proposition \ref{CorrectingV_new} immediately implies Proposition \ref{Correcting}.  To see this, once we have constructed $V$,  set $\kappa=V-v$.  Trivially, we have that $\kappa$ is analytic because the $\pbar$-derivatives of $V$ and $v$ agree.  In Section \ref{dbarsection} it will be shown that the solution $v$ to $\overline{\partial} v=\overline{\partial} V$ will also possess the property $v(z)=\overline{v(-\overline{z})}$ since the data $V$ is symmetric.  So $\kappa$ will also have the symmetry property as well.  Condition (i) on $\kappa$ then follows from the corresponding condition on $V$ and the boundedness of the solution $v$ (condition (iv) above).  Finally, we have 
$$
\abs{\log p -\kappa}=\abs{\log p-V+v}\leq\abs{\log p-V}+\abs{v}.
$$
So, the boundedness of $v$ and condition (ii) on $V$ imply the corresponding condition on $\kappa$.  We will prove Proposition \ref{CorrectingV_new} in Sections \ref{mainconstruction} and \ref{Visgood}.

Now, consider the function $e^{-\kappa}f_1$.   Conditions (i) and (iii) of Proposition \ref{Correcting} imply that $e^{\kappa}\in H^\infty_\R(\C_+)$.  Note that $e^{-\kappa}f_1$ will be positive on the set $\{y\in \R:|f_2(iy)|<\epsilon\}$.  Also, observe that Condition (ii) implies that $e^{-\kappa}f_1$ has a bounded branch of logarithm that is real symmetric on 
$$
\left\{z\in\C_+:\abs{f_2(z)}<\delta'\right\}.
$$  
Applying Theorem \ref{InterpolateLogReal_corrected} and the argument that followed, we obtain $e^h=f_1e^{-\kappa}+f_2G
_1$ with $h,G_1\in H^\infty_\R(\C_+)$ with $\norm{h}_\infty\leq C(\delta,\epsilon)$.  Set $g_2:=G_1e^{-h}$ and $g_1:=e^{-(\kappa+h)}$.  Then we have that $g_1, g_2, g_1^{-1}\in H^\infty_\R(\C_+)$ such that $\norm{g_1}_\infty$, $\norm{g_2}_\infty$, $\norm{g_1^{-1}}_\infty$ is controlled by $C(\delta,\epsilon)$ and 
$$
f_1(z)g_1(z)+f_2(z)g_2(z)=1\quad\forall z\in\C_+.
$$

This argument then shows that to prove Theorem \ref{Hinfty_Corrected}, we need to establish Proposition \ref{CorrectingV_new}.

\section{\texorpdfstring{Construction of Bounded Solutions to the $\pbar$-Equation with $H_{\R}^\infty(\D)$ Data}{Construction of Bounded Solutions to the d-bar equation with real symmetric data}}
\label{dbarsection}
As is well known, solutions to the $\pbar$-equation on the disc are intimately connected with solutions to the Corona Problem because of connections between $\pbar$-equations and Carleson measures.  We now recall the definition of Carleson measures.  Let $I$ be an interval in $\R$ and form the Carleson square $Q=Q(I)$ over $I$,
$$
Q(I):=\left\{z\in\C_+:\re z\in I,\ \im z\leq\abs{I}\right\}.
$$
Then we say a non-negative measure $\mu$ in the upper half-plane $\C_+$ is  a Carleson measure if
$$
\sup_{I}\f{\mu(Q(I))}{\abs{I}}:=K<\infty,
$$
with the supremum taken over all intervals $I$ in $\R$.  The constant $K$ will be called the \textit{intensity} of the Carleson measure.  It is immediate to transfer these notions to the disc $\D$.  We have the following well known theorem, which can be found in \cite{TreilStable}.
\begin{thm}
\label{dbaronD}
Let $V$ be a $\mathcal{C}^2$ function on the unit disc $\D$ which is continuous up to the boundary $\T$.  Suppose that 
\begin{enumerate}
\item $\abs{\pbar V(z)}\,dxdy$ is a Carleson measure with intensity $K_1$;\\
\item $\abs{\Delta V(z)}(1-\abs{z}^2)\,dxdy$ is a Carleson measure with intensity $K_2$;\\
\item $\abs{\Delta V(z)}\leq\f{K_3}{(1-\abs{z}^2)^2}$.
\end{enumerate}
Then the equation 
$$
\pbar v=\pbar V
$$
has a bounded solution $v$ on all of $\D$ (not only the boundary $\T$) with
$$
\abs{v(z)}\leq C(K_1,K_2,K_3)\quad\forall z\in\D.
$$
\end{thm}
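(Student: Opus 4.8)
The plan is to reduce everything to a single linear estimate against the Hardy space $H^1(\D)$, and then to establish that estimate by Green's theorem together with Carleson's embedding theorem. By the standard duality between the $\pbar$-equation and $H^1$ (see \cite{TreilStable}; this is also what lets one pass from a solution on $\T$ to a genuine bounded solution on all of $\D$), it suffices to prove that
$$
\abs{\,\int_\D \pbar V(z)\,\varphi(z)\,dx\,dy\,}\leq C(K_1,K_2,K_3)\,\norm{\varphi}_{H^1}
$$
for every $\varphi$ in a dense subclass of $H^1(\D)$ — polynomials, say, where all the integrations by parts below are legitimate, the passage to general $\varphi$ being by density. The resulting constant bounds $\operatorname{dist}_{L^\infty(\D)}(v_0,H^\infty(\D))$, where $v_0$ is the Cauchy transform of $\pbar V$, and a bounded solution with $\norm{v}_\infty\leq C(K_1,K_2,K_3)$ is then produced.

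The first step is to insert the logarithmic Green kernel of $\D$ with pole at the origin: apply Green's representation formula to $\Phi(z):=z\,V(z)\,\varphi(z)$, which is $\mathcal C^2$ on $\D$, continuous on $\overline{\D}$, and vanishes at $0$. Since $\varphi$ is analytic, writing $\psi:=z\varphi$ one has $\Delta(V\psi)=\psi\,\Delta V+4\,\psi'\,\pbar V$, so Green's formula gives, up to absolute constants,
$$
\int_\D \pbar V\,\varphi\,dx\,dy=c_1\!\int_\D \psi(z)\,\Delta V(z)\,\log\tfrac1{\abs z}\,dx\,dy+c_2\!\int_\D \psi'(z)\,\pbar V(z)\,\log\tfrac1{\abs z}\,dx\,dy .
$$
Away from $0$ the weight $\log\frac1{\abs z}$ is comparable to $1-\abs z^2$, the exact weight in hypotheses (1) and (2). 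For the first term, observe that $\abs{\Delta V(z)}\log\frac1{\abs z}\,dx\,dy$ is a Carleson measure of intensity $\lesssim K_2+K_3$: hypothesis (2) handles the part near $\T$, while the pointwise bound (3), together with interior elliptic estimates for $\pbar$ (note $\partial\pbar V=\tfrac14\Delta V$), controls the behaviour near the origin, where $\log\frac1{\abs z}$ is unbounded. Carleson's embedding theorem for $H^1$ then yields $\big|\int_\D \psi\,\Delta V\,\log\frac1{\abs z}\,dx\,dy\big|\leq C(K_2+K_3)\norm{\psi}_{H^1}=C(K_2+K_3)\norm{\varphi}_{H^1}$, using $\norm{\psi}_{H^1}=\norm{\varphi}_{H^1}$.

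The main obstacle is the second (``gradient'') term, since $\psi'=\varphi+z\varphi'$ carries a derivative of the $H^1$ function. Here I would factor $\varphi=B\,H^2$ with $B$ a Blaschke product and $H\in H^2$ outer, $\norm{H}_{H^2}^2=\norm{\varphi}_{H^1}$, expand $\psi'$ by the product rule, use the Schwarz--Pick inequality $\abs{B'(z)}(1-\abs z^2)\leq 1$ and Cauchy--Schwarz, and so reduce to Littlewood--Paley-type estimates against the Carleson measure $\nu:=\abs{\pbar V}\,dx\,dy$ of intensity $K_1$. The terms not involving $H'$ are controlled by the ordinary Carleson embedding theorem on $H^2$ (e.g.\ $\int_\D\abs{H}^2\,d\nu\lesssim K_1\norm{H}_{H^2}^2$). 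The one genuinely new point is that $d\rho:=(1-\abs z^2)^2\,d\nu$ satisfies the box condition $\rho(Q(I))\lesssim\abs I^2\,\nu(Q(I))\lesssim K_1\abs I^3$, which is exactly the condition making $\rho$ a Carleson measure for the weighted Bergman space that the derivatives of $H^2$ functions live in; hence $\int_\D\abs{H'(z)}^2(1-\abs z^2)^2\,d\nu(z)\lesssim K_1\norm{H}_{H^2}^2$. The contribution of $\{\abs z<1/2\}$, where $\log\frac1{\abs z}$ blows up, is absorbed using interior estimates for $H^2$ functions and again the pointwise bound (3). Collecting all the pieces gives the testing inequality with constant $C(K_1,K_2,K_3)$, and the duality invoked at the outset converts it into the desired bounded solution. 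The bulk of the real work is this gradient-term estimate, together with verifying that the duality reduction (solvability in $L^\infty(\D)$ $\Longleftrightarrow$ the testing inequality) applies under the present hypotheses on $V$.
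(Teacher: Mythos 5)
The paper does not actually prove Theorem~\ref{dbaronD}; it states it as a known result and cites Treil~\cite{TreilStable} (Section~5 there) for the proof, then uses it as a black box in Theorem~\ref{realdbaronD}.  So there is nothing in the source to compare your argument against line by line.  Judged on its own, your sketch follows the expected Wolff-type duality route, but it has one real gap and a couple of imprecisions that are worth being explicit about.

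The substantive gap concerns the passage from the boundary to the interior.  Duality against $H^1$ --- that is, an estimate of the form $\bigl|\int_{\T} V\varphi\,d\theta\bigr|\leq C(K_1,K_2)\norm{\varphi}_{H^1}$ (note that this, not $\int_{\D}\pbar V\,\varphi\,dxdy$, is the quantity the Green's-formula manipulation actually produces; the two are not the same) --- controls only $\operatorname{dist}_{L^\infty(\T)}(V|_{\T},H^\infty)$.  Writing $v=V+g$ with $g$ analytic and $\norm{v|_\T}_\infty\leq C$, the maximum principle bounds $g$ on all of $\D$, but the interior bound on $v$ you then read off depends on $\norm{V}_{L^\infty(\D)}$, which the hypotheses do not control.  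The whole point of hypothesis~(3), and of stating the theorem as ``bounded on all of $\D$, not only on $\T$,'' is precisely to remove this dependence: in the applications $V$ approximates $\log p$ and $\norm{V}_\infty$ grows with the degree of the Blaschke product, so a bound that secretly involves $\norm{V}_\infty$ is useless.  Your sketch invokes (3) only as a technical device near the origin (to tame $\log\tfrac1{|z|}$), but its real role is in the interior estimate, and you never explain how the duality bound on $\T$ gets promoted to a bound on $\D$ with constant $C(K_1,K_2,K_3)$.  This is the step Treil actually has to work for, and as written your argument simply asserts it.

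Smaller points: the identity $\Delta(V\psi)=\psi\,\Delta V+4\psi'\pbar V$ and the observation $\norm{z\varphi}_{H^1}=\norm{\varphi}_{H^1}$ are fine, and so is the $H^1=H^2\cdot H^2$ reduction with the Carleson-embedding / weighted-Bergman estimate $\int_{\D}|H'|^2(1-|z|^2)^2\,d\nu\lesssim K_1\norm{H}_{H^2}^2$ for a Carleson $\nu$.  But near the origin you also need an a priori pointwise bound on $\pbar V$ itself (not just on $\Delta V$), which you should extract from (1) and (3) together (e.g.\ $\iint_{\D}|\pbar V|\lesssim K_1$ gives a small value somewhere, and $|\partial\pbar V|=\tfrac14|\Delta V|\lesssim K_3$ on $|z|\leq 1/2$ propagates it); as phrased this is waved at rather than done.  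The structure of your argument is the right one, but until the boundary-to-interior step is filled in with an honest use of (3), the proof does not establish the stated conclusion.
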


Now we want to show that if the function $V$ has the property that $V(z)=\overline{V(\overline{z})}$ for all $z\in\D$, then this property is inherited by the solution $v$.  This leads to the following theorem.

\begin{thm}
\label{realdbaronD}
Let $V$ be a $\mathcal{C}^2$ function on the unit disc $\D$ which is continuous up to the boundary $\T$.  Suppose that 
\begin{enumerate}
\item $V(z)=\overline{V(\overline{z})}$ for all $z\in\D$;\\
\item $\abs{\pbar V(z)}\,dxdy$ is a Carleson measure with intensity $K_1$;\\
\item $\abs{\Delta V(z)}(1-\abs{z}^2)\,dxdy$ is a Carleson measure with intensity $K_2$;\\
\item $\abs{\Delta V(z)}\leq\f{K_3}{(1-\abs{z}^2)^2}$.
\end{enumerate}
Then the equation 
$$
\pbar v=\pbar V
$$
has a bounded solution $v$ on all of $\D$ (not only the boundary $\T$) with
$$
\abs{v(z)}\leq C(K_1,K_2,K_3)\ \textnormal{and}\ v(z)=\overline{v(\overline{z})}\quad\forall z\in\D.
$$
\end{thm}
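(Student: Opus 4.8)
The plan is to construct the symmetric solution by symmetrizing an arbitrary bounded solution furnished by Theorem~\ref{dbaronD}. First I would apply Theorem~\ref{dbaronD} to $V$: hypotheses (2)--(4) of Theorem~\ref{realdbaronD} are precisely its hypotheses, so we obtain a function $v_0$ on $\D$ with $\pbar v_0=\pbar V$ and $\abs{v_0(z)}\le C(K_1,K_2,K_3)$ for all $z\in\D$. Since $\pbar(v_0-V)=0$, the function $F:=v_0-V$ is holomorphic on $\D$, and because $V$ is continuous on $\overline\D$ while $v_0$ is bounded, $F\in H^\infty(\D)$.

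Next, I would define the conjugate-reflection $\tilde v(z):=\f12\bigl(v_0(z)+\overline{v_0(\overline z)}\bigr)$, which is legitimate since $\overline z\in\D$ whenever $z\in\D$. By construction $\tilde v(z)=\overline{\tilde v(\overline z)}$ for all $z\in\D$, and $\abs{\tilde v(z)}\le\f12\bigl(\abs{v_0(z)}+\abs{v_0(\overline z)}\bigr)\le C(K_1,K_2,K_3)$, so the desired bound and symmetry are immediate. The only remaining point is to check that $\tilde v$ still solves $\pbar\tilde v=\pbar V$, and by linearity of $\pbar$ this reduces to showing that $z\mapsto\overline{v_0(\overline z)}$ solves the same equation. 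Writing $\overline{v_0(\overline z)}=\overline{V(\overline z)}+\overline{F(\overline z)}$, the map $z\mapsto\overline{F(\overline z)}$ is holomorphic on $\D$ (Schwarz reflection of the holomorphic $F$) and hence killed by $\pbar$, while the hypothesis $V(z)=\overline{V(\overline z)}$ says $\overline{V(\overline z)}=V(z)$; therefore $\pbar\bigl(\overline{v_0(\overline z)}\bigr)=\pbar V$. Equivalently, the Wirtinger calculus gives $\pbar\bigl(\overline{v_0(\overline z)}\bigr)=\overline{(\pbar v_0)(\overline z)}=\overline{(\pbar V)(\overline z)}=\pbar V(z)$, the last equality being the infinitesimal form of the symmetry of $V$.

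I do not anticipate a genuine obstacle: the content is exactly that the operations ``produce a bounded solution of the $\pbar$-equation with the given Carleson bounds'' and ``symmetrize'' commute, which works because conjugate-reflection preserves holomorphy and fixes the already-symmetric datum $\pbar V$. The one place deserving a little care is the short computation in the previous paragraph identifying $z\mapsto\overline{v_0(\overline z)}$ as another bounded solution; once that is in place, the boundedness estimate and the symmetry of $\tilde v$ follow with no further work, and the theorem is proved.
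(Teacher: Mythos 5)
Your proposal is correct and follows essentially the same route as the paper: apply Theorem~\ref{dbaronD} to obtain a bounded solution, symmetrize via $\tilde v(z)=\tfrac12\bigl(v_0(z)+\overline{v_0(\overline z)}\bigr)$, and verify that $\pbar\tilde v=\pbar V$ using the identity $\pbar\bigl(\overline{f(\overline z)}\bigr)=\overline{(\pbar f)(\overline z)}$ together with the symmetry of $V$. The detour through $F=v_0-V$ and Schwarz reflection is a harmless alternative way to see the same computation (and boundedness of $F$ is not actually needed, only its holomorphy), but your closing Wirtinger-calculus line is exactly the paper's argument.
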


\begin{proof}[Proof of \ref{realdbaronD}]
We first apply Theorem \ref{dbaronD} to find a solution $v$ which is bounded on all of $\D$.  Then we replace it with the following function
$$
\tilde{v}(z):=\f{v(z)+\overline{v(\overline{z})}}{2}.
$$
Note that we have $\tilde{v}(z)=\overline{\tilde{v}(\overline{z})}$ and that $\norm{\tilde{v}}_{H^\infty(\D)}\leq\norm{v}_{H^\infty(\D)}\leq C(K_1,K_2,K_3)$.  We only need that $\pbar\tilde v=\pbar V$ for all $z\in\D$.  But, this follows from direct application of the chain rule.  Indeed, set $f^{\dagger}(z):=\overline{f(\overline{z})}$ it suffices to show that
$$
\pbar f^{\dagger}(z)=(\pbar f)^{\dagger}(z).
$$
Once we have this, then
\begin{eqnarray*}
\pbar V(z) & = & \pbar\tilde{v}(z)
\end{eqnarray*}
follows immediately.  Now, by definitions and the chain rule we have

\begin{eqnarray*}
\pbar f^{\dagger}(z) & = & \f{\p}{\p\overline{z}} \overline{f(\overline{z})}\\
 & = & \overline{\f{\p}{\p z} f(\overline{z})}\\
 & = & \overline{\f{\p f}{\p z}(\overline{z})\f{\p \overline{z}}{\p z}+\f{\p f}{\p \overline{z}}(\overline{z})\f{\p z}{\p z}}\\
 & = & \overline{\f{\p f}{\p \overline{z}}(\overline{z})}=(\pbar f)^{\dagger}(z).
\end{eqnarray*}
\end{proof}

Using the conformal equivalence between $\D$ and $\C_+$, it is possible to translate the above theorem, leading to the following.

\begin{thm}
\label{realdbaronH}
Let $V$ be a $\mathcal{C}^2$ function on the upper half-plane $\C_+$ which is continuous up to the boundary $\R$ and at the point $z=\infty$.  Suppose further that 
\begin{enumerate}
\item $V(z)=\overline{V(-\overline{z})}$ for all $z\in\C_+$;\\
\item $\abs{\pbar V(z)}\,dxdy$ is a Carleson measure with intensity $K_1$;\\
\item $\abs{\Delta V(z)}\im z\,dxdy$ is a Carleson measure with intensity $K_2$;\\
\item $\abs{\Delta V(z)}\leq\f{K_3}{(\im z)^2}$.
\end{enumerate}
Then the equation 
$$
\pbar v=\pbar V
$$
has a bounded solution $v$ on all of $\C_+$ (not only the boundary $\R$) with
$$
\abs{v(z)}\leq C(K_1,K_2,K_3)\ \textnormal{and}\ v(z)=\overline{v(-\overline{z})}\quad\forall z\in\C_+.
$$
\end{thm}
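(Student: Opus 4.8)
The plan is to reduce Theorem \ref{realdbaronH} to the disc version already established, Theorem \ref{realdbaronD}, while keeping track of the symmetry. There are two essentially equivalent ways to do this, and I will carry out the more economical one, which runs the symmetrization argument directly on $\C_+$ and uses the disc material only through the (non-symmetric) upper half-plane analogue of Theorem \ref{dbaronD}. That analogue --- if $V$ is $\mathcal C^2$ on $\C_+$, continuous up to $\R$ and at $\infty$, with $\abs{\pbar V}\,dxdy$ and $\abs{\Delta V}\,\im z\,dxdy$ Carleson of intensities $K_1,K_2$ and $\abs{\Delta V}\leq K_3/(\im z)^2$, then $\pbar v=\pbar V$ has a solution on all of $\C_+$ with $\abs{v}\leq C(K_1,K_2,K_3)$ --- follows from Theorem \ref{dbaronD} by the standard conformal change of variables $z=i\f{1+w}{1-w}$, under which the three hypotheses and the continuity requirement are matched term by term; this is exactly the fact implicit in the way Proposition \ref{CorrectingV_new}(iv) is phrased. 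So, fixing $V$ as in the statement, I first obtain such a solution $v$.

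Next I symmetrize: set $\widetilde v(z):=\f{v(z)+\overline{v(-\overline z)}}{2}$. Then $\widetilde v(-\overline z)=\overline{\widetilde v(z)}$ and $\abs{\widetilde v(z)}\leq\norm{v}_\infty\leq C(K_1,K_2,K_3)$, so it remains only to check $\pbar\widetilde v=\pbar V$. As in the proof of Theorem \ref{realdbaronD}, set $f^{\ddagger}(z):=\overline{f(-\overline z)}$; a direct chain-rule computation (the inner map $z\mapsto -\overline z$ contributes $\p(-z)/\p z=-1$ in the place of the $\p z/\p z=1$ appearing in the $\dagger$-identity of Theorem \ref{realdbaronD}) gives $\pbar(f^{\ddagger})=-(\pbar f)^{\ddagger}$. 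Applying this to $V$ and using hypothesis (1), i.e. $V=V^{\ddagger}$, yields $\pbar V=\pbar(V^{\ddagger})=-(\pbar V)^{\ddagger}$, so $(\pbar V)^{\ddagger}=-\pbar V$. Hence
$$
\pbar\widetilde v=\tfrac12\big(\pbar v+\pbar(v^{\ddagger})\big)=\tfrac12\big(\pbar v-(\pbar v)^{\ddagger}\big)=\tfrac12\big(\pbar V-(\pbar V)^{\ddagger}\big)=\tfrac12\big(\pbar V+\pbar V\big)=\pbar V,
$$
which finishes the argument.

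Rather than a genuine obstacle, there are two points needing care. The first is the sign in the chain rule for $f^{\ddagger}$: here two minus signs conspire --- the one in $\pbar(f^{\ddagger})=-(\pbar f)^{\ddagger}$ and the one forced on $(\pbar V)^{\ddagger}$ by the symmetry of $V$ --- so that $\widetilde v$ solves the \emph{same} $\pbar$-equation exactly rather than acquiring a spurious term. The second is the one-time passage from Theorem \ref{dbaronD} on $\D$ to its $\C_+$ form, which is the classical conformal invariance of Carleson measures and of the growth bound $\abs{\Delta V}\leq K_3/(\im z)^2$, and is entirely independent of the symmetry. If one prefers to invoke Theorem \ref{realdbaronD} as a black box, the alternative route is: put $\widetilde V:=V\circ\sigma$ with $\sigma(w)=i\f{1+w}{1-w}$ the conformal map $\D\to\C_+$, observe $\sigma(\overline w)=-\overline{\sigma(w)}$ (equivalently $\tau(-\overline z)=\overline{\tau(z)}$ for $\tau=\sigma^{-1}$, which holds because both reflections fix a hyperbolic geodesic), verify that $\widetilde V$ inherits all four hypotheses of Theorem \ref{realdbaronD} under the change of variables (using $\pbar\widetilde V=(\pbar V\circ\sigma)\overline{\sigma'}$, $\Delta\widetilde V=\abs{\sigma'}^2(\Delta V\circ\sigma)$, $dxdy=\abs{\sigma'}^2\,dudv$, and $1-\abs w^2=2\,\im\sigma(w)/\abs{\sigma'(w)}$, together with the boundedness of $\abs{\tau'}$ on $\C_+$), obtain the bounded symmetric solution $\widetilde v$ on $\D$, and pull it back by $v:=\widetilde v\circ\tau$; then $\pbar v=\pbar V$ because $\widetilde V\circ\tau=V$, and $v(-\overline z)=\overline{v(z)}$ because $\tau$ intertwines the reflections.
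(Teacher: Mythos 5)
Your proposal is correct, and both of your routes are valid. The paper itself offers only a one-line proof (``Using the conformal equivalence between $\D$ and $\C_+$, it is possible to translate the above theorem''), which amounts to your alternative route: transfer Theorem~\ref{realdbaronD} wholesale to $\C_+$. Your main route is a minor but genuine variation: you first transfer only the non-symmetric Theorem~\ref{dbaronD} to $\C_+$, obtain a bounded solution $v$ there, and then carry out the symmetrization directly in the half-plane via $\widetilde v=\tfrac12\big(v+v^\ddagger\big)$. What makes this work --- and what you rightly emphasize --- is that the half-plane involution $f^\ddagger(z)=\overline{f(-\overline z)}$ satisfies $\pbar(f^\ddagger)=-(\pbar f)^\ddagger$ (a sign that is \emph{absent} in the disc computation, where $\pbar(f^\dagger)=(\pbar f)^\dagger$), while the hypothesis $V=V^\ddagger$ forces $(\pbar V)^\ddagger=-\pbar V$; the two minus signs cancel and $\widetilde v$ solves the same equation. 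I verified this sign computation independently and it is correct. Your route has the modest advantage of isolating the symmetrization as a purely real-variable identity on $\C_+$ independent of the conformal transfer; the paper's (and your alternative) route has the advantage of invoking the symmetric disc theorem as a black box. Either is acceptable; the content is the same.
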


\noindent
These theorems will be used to find $H_\R^\infty(\D)$ solutions to certain $\pbar$-equations.

\section{Main Construction}
\label{mainconstruction}
\noindent
Recall that it only remains to prove Proposition \ref{CorrectingV_new}.\\

\noindent
\textbf{Proposition \ref{CorrectingV_new}}
\textit{Let $p,q\in H^\infty_\R(\C_+)$ be finite simple real symmetric Blaschke products with $\inf_{z\in\C_+}(\abs{p(z)}+\abs{q(z)})=\delta>0$ such that for some $\epsilon>0$ $p$ is positive on the set $\{y\in\R :|q(iy)|<\epsilon\}$.  Then there exists a function $V$ with the following properties:
\begin{itemize}
\item[(i)] $\abs{\re V(z)}\leq C(\delta,\epsilon)\quad\forall z\in\C_+$;\\
\item[(ii)] $\abs{\log p(z)-V(z)}\leq C(\delta,\epsilon)$ for all $z$ in $\{z\in\C_+:\abs{q(z)}<\delta'\}$ for some $0<\delta'$ sufficient small with respect to $\delta$ and $\epsilon$ and an appropriate branch of $\log p$ on the set $\{z\in\C_+:\abs{q(z)}<\delta'\}$;\\
\item[(iii)] $V(z)=\overline{V(-\overline{z})} \quad\forall z\in\C_+$;\\
\item[(iv)] some conditions to guarantee the existence of a bounded solution $v$ on the entire upper half-plane $\C_+$ of the equation $\pbar v=\pbar V$, in particular:\\
\begin{itemize}
\item[(a)] $\abs{\Delta V(z)} \im z\,dxdy$ is a Carleson measure with intensity $C(\delta,\epsilon)$;\\
\item[(b)] $\abs{\p V(z)}\,dxdy$ is a Carleson measure with intensity $C(\delta,\epsilon)$;\\
\item[(c)] $\abs{\Delta V(z)}\leq\f{C(\delta,\epsilon)}{(\im z)^2}\quad\forall z\in\C_+$.
\end{itemize}
\end{itemize}
}

The construction of $V$ is inspired by the construction given by S. Treil in \cite{TreilStable}, however we need to appropriately modify it to take advantage of the symmetry that $H^\infty_\R(\C_+)$ functions possess.  Parts of the construction below could be given by simply taking the construction of Treil in \cite{TreilStable} and reflecting in the imaginary axis.  However, in the interest of making the paper as self contained as possible, instead of just pointing to the construction in \cite{TreilStable} we instead will describe the necessary details of the construction.

The main approach to this Proposition is the construction of a symmetric Carleson contour.  We use the modification developed by Bourgain in \cite{Bourgain} and exploited by Treil in \cite{TreilStable}.  We further modify the method to force symmetry into the Carleson regions, which is possible since we are working with the algebra $H^\infty_\R(\C_+)$.  This is an essential point in the argument.

We let $b_a(z)$ denote the elementary Blaschke factor in $H^\infty(\C_+)$ with zero at $a\in\C_+$, i.e., $b_a(z):=\f{z-a}{z-\overline{a}}$.  The following lemmas will be of use.  

\begin{lm}
\label{BlasEst}
Let $B=\prod_{a\in\sigma}b_a$ be a finite Blaschke product with simple zeros.  Suppose that for a given $z\in\C_+$ and $\gamma>0$ we have
$$
\abs{b_a(z)}\geq\gamma\quad\forall a\in\sigma.
$$
Then
$$
\sum_{a\in\sigma}\f{2\im z \im a}{\abs{z-\overline{a}}^2}\leq\log\f{1}{\abs{B(z)}}\leq\f{1}{\gamma}\sum_{a\in\sigma}\f{2\im z\im a}{\abs{z-\overline{a}}^2}.
$$
\end{lm}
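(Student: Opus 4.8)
The plan is to reduce everything to the single-factor identity and then sum. For one elementary factor $b_a$, a direct computation gives
$$
\abs{b_a(z)}^2 = \abs{\f{z-a}{z-\overline{a}}}^2 = 1 - \f{\abs{z-\overline{a}}^2 - \abs{z-a}^2}{\abs{z-\overline{a}}^2} = 1 - \f{4\im z\,\im a}{\abs{z-\overline{a}}^2},
$$
so that
$$
\log\f{1}{\abs{b_a(z)}^2} = -\log\!\left(1 - \f{4\im z\,\im a}{\abs{z-\overline{a}}^2}\right).
$$
Writing $t_a := \f{2\im z\,\im a}{\abs{z-\overline{a}}^2}\in[0,1)$, this says $\log\f{1}{\abs{b_a(z)}} = -\tfrac12\log(1-2t_a)$. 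The hypothesis $\abs{b_a(z)}\ge\gamma$ for all $a$ translates into $1-2t_a\ge\gamma^2$, i.e. $0\le t_a \le \tfrac12(1-\gamma^2)$, so each $t_a$ is bounded away from $\tfrac12$.

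Next I would invoke the elementary real-variable inequality that for $0\le s\le s_0<1$ one has
$$
s \le -\tfrac12\log(1-2s) \le \f{-\tfrac12\log(1-2s_0)}{s_0}\, s
$$
wait — more cleanly: the function $\phi(s) = -\tfrac12\log(1-2s)$ satisfies $\phi(s)\ge s$ for $s\ge 0$ (since $\phi(0)=0$, $\phi'(0)=1$, $\phi$ convex) and $\phi(s)\le c(\gamma)\,s$ for $0\le s\le\tfrac12(1-\gamma^2)$, where $c(\gamma)$ is the slope of the chord, $c(\gamma)=\phi(s_0)/s_0$ with $s_0=\tfrac12(1-\gamma^2)$; one checks $c(\gamma)=\tfrac{-\log\gamma^2}{1-\gamma^2}\le \tfrac1\gamma$ for $0<\gamma<1$ by a routine one-variable estimate (equivalently $\gamma\log\tfrac1{\gamma^2}\le 1-\gamma^2$, which follows from $\gamma\le 1$ and $\log x\le x-1$). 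Applying the lower bound $\phi(t_a)\ge t_a$ and the upper bound $\phi(t_a)\le \tfrac1\gamma t_a$ termwise and summing over $a\in\sigma$ gives
$$
\sum_{a\in\sigma} t_a \;\le\; \sum_{a\in\sigma}\log\f{1}{\abs{b_a(z)}} \;=\; \log\f{1}{\abs{B(z)}} \;\le\; \f{1}{\gamma}\sum_{a\in\sigma} t_a,
$$
which is exactly the claimed chain of inequalities once one recalls $t_a = \f{2\im z\,\im a}{\abs{z-\overline a}^2}$ and that $\log\f1{\abs{B(z)}} = \sum_a \log\f1{\abs{b_a(z)}}$ since $B$ is a product.

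I do not expect any genuine obstacle here; the only mild care needed is the scalar comparison $\f{-\log\gamma^2}{1-\gamma^2}\le\f1\gamma$ (so that the stated constant $\tfrac1\gamma$ is valid, not just some $c(\gamma)$), and keeping the factor of $2$ versus $4$ straight between $\abs{b_a}^2$ and $\log\f1{\abs{b_a}}$. Since $\sigma$ is finite all sums converge trivially, so no convergence issue arises; the finiteness of the Blaschke product is used only to avoid that bookkeeping.
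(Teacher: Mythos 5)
Your reduction to the single-factor identity
$\abs{b_a(z)}^2 = 1 - \tfrac{4\im z\,\im a}{\abs{z-\overline a}^2}$
and the convexity argument for $\phi(s)=-\tfrac12\log(1-2s)$ is the right way to prove this lemma; the paper does not give a proof (it is quoted from Treil), so there is nothing to compare against, but your structure — lower bound from $\phi(s)\ge s$, upper bound from the chord of a convex function over $[0,s_0]$ with $s_0=\tfrac12(1-\gamma^2)$, then termwise summation — is exactly the standard argument and yields the claimed constants.

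There is, however, one genuine slip in the justification of the scalar inequality $c(\gamma)=\tfrac{-\log\gamma^2}{1-\gamma^2}\le\tfrac1\gamma$, which is the step you yourself flag as needing ``mild care.'' You claim it ``follows from $\gamma\le1$ and $\log x\le x-1$,'' but that route does not close: applying $\log x\le x-1$ with $x=\tfrac1\gamma$ gives $\gamma\log\tfrac1{\gamma^2}\le 2(1-\gamma)$, and comparing with $1-\gamma^2=(1-\gamma)(1+\gamma)$ requires $2\le 1+\gamma$, which fails for $\gamma<1$. The inequality you want is nevertheless true; a clean proof is to set
$h(\gamma):=\tfrac1\gamma-\gamma+2\log\gamma$
and observe $h'(\gamma)=-\bigl(\tfrac1\gamma-1\bigr)^2\le 0$ with $h(1)=0$, so $h\ge 0$ on $(0,1]$, which after multiplying by $\gamma$ is precisely $\gamma\log\tfrac1{\gamma^2}\le 1-\gamma^2$. (Equivalently, one can bypass the chord entirely: from $-\log u\le\tfrac{1-u}{\sqrt u}$ for $0<u\le1$ — the same inequality in disguise — applied with $u=\abs{b_a(z)}^2$ and $\sqrt u=\abs{b_a(z)}\ge\gamma$, one gets $\log\tfrac1{\abs{b_a(z)}}\le\tfrac1{2\gamma}(1-\abs{b_a(z)}^2)=\tfrac1\gamma t_a$ directly.) With that replacement the proof is complete.

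Two small cosmetic notes: it is worth saying explicitly that the hypothesis forces $z\notin\sigma$, so each $t_a<\tfrac12$ and $\log\tfrac1{\abs{B(z)}}$ is finite; and the clause you started (``wait — more cleanly'') should be deleted before this goes into print.
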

The next lemma will be used to construct the Carleson regions appropriately adapted to our functions.

\begin{lm}
\label{CarlRegion}
Let $B$ be a finite Blaschke product with simple zeros with $\sigma$ denoting its zero set.  Let $Q=Q(I)$ be a square with the base $I$ and suppose that there is a point $z_0$ in the top half of $Q$ with $\abs{B(z_0)}\geq\eta>0$.  Then, given $M<\infty$, there exists a collection of disjoint closed subinterval $\{I_k\}$ of $I$ with the following properties:
\begin{enumerate}
\item[(i)] $\sum\abs{I_k}\leq 20\log\f{1}{\eta}M^{-1}\abs{I}$;\\
\item[(ii)] $\sum_{a\in\sigma\cap Q(3I_k)}\im a\geq M\abs{I_k}\quad\forall k$;\\
\item[(iii)] If $z\in Q\setminus\cup_kQ(I_k)$, then $\sum_{a\in\sigma}\f{\im z \im a}{\abs{z-\overline{a}}^2}\leq C(M+\log\f{1}{\eta})$ with $C$ an absolute constant;\\
\item[(iv)] The measure $\mu:=\sum_{a\in\sigma\cap Q\setminus\cup_kQ(I_k)}\im a\  \delta_a$ is a Carleson measure with intensity at most $5M$.
\end{enumerate}
\end{lm}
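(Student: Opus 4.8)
The plan is to carry out the Calderón--Zygmund / Bourgain--Treil stopping-time construction. The first move is to turn the hypothesis at $z_0$ into a global bound on the mass $\nu:=\sum_{a\in\sigma}\im a\,\delta_a$. Since $|B(z_0)|=\prod_{a\in\sigma}|b_a(z_0)|\le|b_a(z_0)|$ for every $a$, each elementary factor obeys $|b_a(z_0)|\ge|B(z_0)|\ge\eta$, so Lemma~\ref{BlasEst} with $\gamma=\eta$ gives
$$
\sum_{a\in\sigma}\frac{2\,\im z_0\,\im a}{|z_0-\bar a|^2}\ \le\ \log\frac{1}{|B(z_0)|}\ \le\ \log\frac1\eta .
$$
Because $z_0$ is in the top half of $Q=Q(I)$ one has $\im z_0\ge\tfrac12|I|$ and $|z_0-\bar a|^2\le 20|I|^2$ for $a\in\sigma\cap Q(3I)$, so each such $a$ contributes at least $\tfrac{\im a}{20|I|}$ to the sum; hence $\nu(Q(3I))\le 20|I|\log\tfrac1\eta$, and the same estimate over any fixed dilate $cI$ gives $\nu(Q(cI))\le C_c|I|\log\tfrac1\eta$. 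Every ``global'' bound below will be of this form.

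Next comes the selection. Let $\{I_k\}$ be the maximal dyadic subintervals $J$ of $I$ for which $\nu(Q(3J))\ge M|J|$; the dilation by $3$ here is essential. These are pairwise disjoint and satisfy (ii) by construction, and since each dyadic parent $\widetilde I_k$ fails the stopping condition while $Q(3I_k)\subseteq Q(3\widetilde I_k)$ we get the two-sided bound
$$
M|I_k|\ \le\ \nu(Q(3I_k))\ \le\ \nu(Q(3\widetilde I_k))\ <\ M|\widetilde I_k|\ =\ 2M|I_k| .
$$
For (iv): given an interval $L$, cover $L\cap I$ by at most three dyadic intervals $J$ of length $\le|L|$, discarding those contained in some $I_k$ (they carry no $\mu$-mass); each surviving $J$ lies in no $I_k$, hence is not selected, so $\mu(Q(J))\le\nu(Q(J))\le\nu(Q(3J))<M|J|$, and summing yields $\mu(Q(L))\le 3M|L|\le 5M|L|$. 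For (i) one has $M\sum_k|I_k|\le\sum_k\nu(Q(3I_k))$, and the Step~1 bound finishes the job once the multiplicity with which a zero is counted by the dilated cores is controlled --- the packing/overlap estimate of the Bourgain--Treil construction, which is also the reason the $I_k$ should be chosen by a Vitali-type disjointification (so that $\{3I_k\}$ have bounded overlap) rather than carelessly.

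The last conclusion, (iii), is where the dilation in the stopping rule is used. Fix $z=x+it\in Q\setminus\bigcup_kQ(I_k)$, write $S(z):=\sum_{a\in\sigma}\frac{\im z\,\im a}{|z-\bar a|^2}$ and set $L_j:=(x-2^jt,\,x+2^jt)$. Splitting $\sigma$ into $Q(L_0)$ and the shells $Q(L_j)\setminus Q(L_{j-1})$ ($j\ge1$), on which $|z-\bar a|\gtrsim 2^jt$, gives $S(z)\lesssim\sum_{j\ge0}4^{-j}t^{-1}\nu(Q(L_j))$. The zeros with $|z-\bar a|>2|I|$ are peeled off first: since $z,z_0\in Q$ one has $|z_0-\bar a|\le 2|z-\bar a|$ and $\im z\le 2\im z_0$, so their total contribution is $\lesssim\sum_{a\in\sigma}\frac{\im z_0\,\im a}{|z_0-\bar a|^2}\lesssim\log\tfrac1\eta$. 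For the remaining shells: when $2^jt\le|I|$, let $J^{(j)}$ be the smallest dyadic interval containing $x$ of length $\ge 2^jt$; then $z\in Q(J^{(j)})$, so $z\notin\bigcup_kQ(I_k)$ forces $J^{(j)}$ to be un-selected, i.e. $\nu(Q(3J^{(j)}))<M|J^{(j)}|$, and since $Q(L_j)\subseteq Q(3J^{(j)})$ we obtain $\nu(Q(L_j))\le M|L_j|$; these terms sum to $\lesssim M$. The $O(1)$ shells with $2^jt\approx|I|$ are controlled by $\nu(Q(L_j))\lesssim|I|\log\tfrac1\eta$ and contribute $\lesssim\log\tfrac1\eta$. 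Altogether $S(z)\le C(M+\log\tfrac1\eta)$.

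I expect the real difficulty to be property (i): (ii), (iii) and (iv) each come out fairly directly once the selection is set up with the tripled stopping condition, whereas controlling the total length $\sum_k|I_k|$ needs the delicate packing estimate that is the technical core of the Bourgain--Treil Carleson-contour construction, and it is this estimate that dictates how the selection must be organized so that all four conclusions hold at once.
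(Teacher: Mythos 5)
The paper does not itself prove this lemma—it says the argument is a stopping-time construction and cites Bourgain and Treil—so your reconstruction is being compared against those references, not an in-text proof. The skeleton you set up is the right one: the global mass bound from Lemma~\ref{BlasEst}, maximal dyadic stopping with the tripled condition $\nu(Q(3J))\ge M|J|$ (where $\nu:=\sum_{a\in\sigma}\im a\,\delta_a$), and the shell decomposition for (iii). Your verifications of (ii), (iii), (iv) under that selection go through (you should also record the easy case $|L|\ge|I|$ in (iv), and be a bit careful in (iii) with the largest scales where $J^{(j)}$ may protrude past $I$—that regime is what the peeling of far zeros is for).

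The genuine gap is in (i), and the repair you hint at would not work. You correctly observe that the dilated boxes $Q(3I_k)$ can overlap with unbounded multiplicity, so $M\sum_k|I_k|\le\sum_k\nu(Q(3I_k))$ does not by itself give (i). But re-choosing the $I_k$ by a Vitali-type disjointification so that $\{3I_k\}$ have bounded overlap destroys exactly what (ii), (iii) and (iv) rely on: that $\{I_k\}$ is the family of \emph{maximal} dyadic intervals passing the stopping test, so that any dyadic $J$ with $z\in Q(J)\setminus\bigcup_k Q(I_k)$ automatically fails the test and gives $\nu(Q(3J))<M|J|$. A Vitali subfamily throws away some passing intervals and that dichotomy is gone. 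The correct, and in fact elementary, route to (i) leaves the $I_k$ alone and notes that each point of $\bigcup_k I_k$ lies where the maximal function $x\mapsto\sup_{J\ni x}\nu(Q(3J))/|J|$ is at least $M$; the weak $(1,1)$ inequality for this Hardy--Littlewood-type maximal function of (the horizontal projection of) $\nu$, whose relevant mass sits in $Q(3I)$ where you already proved $\nu(Q(3I))\le 20|I|\log\frac1\eta$, yields
$$
\sum_k|I_k|=\Bigl|\bigcup_k I_k\Bigr|\;\le\;\frac{C}{M}\,\nu(Q(3I))\;\le\;\frac{C\log\frac1\eta}{M}\,|I|.
$$
Vitali does appear, but only inside the standard proof of the weak $(1,1)$ bound, not as a re-selection of the $I_k$. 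So, contrary to your parting remark, (i) is the easy conclusion once the selection is dyadic-maximal; the delicate packing work lies downstream, in the Carleson-contour and slit construction that uses this lemma.
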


The proof of this lemma is a stopping time argument. See Bourgain \cite{Bourgain} for a version of this lemma or Treil \cite{TreilStable} for the version indicated above.  Since the functions we have possess additional symmetry, we will apply the above lemmas to ``half'' of our function.  This is a key difference between the result found in \cite{TreilStable}.  With these Lemmas, we now construct generations of closed intervals and regions in the following manner.  First, note that for functions in $H^\infty_\R(\C_+)$ we have the following symmetry property
$$
f(z)=\overline{f(-\overline{z})}\quad\forall z\in\C_+.
$$
Recall that a function is \textit{real symmetric} if it satisfies this symmetry condition.  Note that this symmetry is interchanging the left and right halves of $\C_+$.  What is important in our case is that for finite Blaschke products with this symmetry property, a point $a$ is a zero if and only if $-\overline{a}$ is a zero.  We will use this symmetry in the selection of generations of intervals.  Let 
$$
\sigma_R:=\{a\in\sigma :\re a\geq 0\}\quad\sigma_L:=\{a\in\sigma :\re a< 0\}.
$$
This splits the zero set $\sigma$ of $p$ into its left and right halves.  Choose an interval such that the real part of all zeros of the function are contained in this interval.  Since we are working with $H^\infty_\R(\C_+)$ functions, then the zeros will be symmetric and it will be possible to choose a symmetric interval, i.e. $I=(-L,L)$ for some $L$.  Now, take a square $Q=Q(I)$ where $(I=[0,L))$, which contains the zero set $\sigma_R$ and we have $\abs{p(z_0)}\geq\delta'$ for some point $z_0$ in the top half of $Q$ and $\abs{p(z)}\geq\delta'$ for all $z\notin Q(I)\cup Q(-I)$.  One should observe that a rectangle is all that is required to contain the zeros of the function.

Choose $M=M(\delta)>2\cdot100\log\f{1}{\delta'}$, where $\delta'$ is smaller than both $\delta$ and $\epsilon$.  We apply the discussion from the previous paragraph and Lemma \ref{CarlRegion} with the choice $M$, $\eta=\delta'$ and $B=p$ and $Q=Q(I)\supset\sigma_R$.  We thus obtain a sequence of disjoint closed sub-intervals of $I$, $\{I_k\}$, such that:
\begin{enumerate}
\item[(i)] $\sum\abs{I_k}\leq\f{1}{2\cdot 5}\abs{I}$;\\
\item[(ii)] $\sum_{a\in\sigma\cap Q(3I_k)}\im a\geq 2\cdot 100\log\f{1}{\delta'}\abs{I_k}\quad\forall k$;\\
\item[(iii)] If $z\in Q\setminus\cup_kQ(I_k)$, then $\sum_{a\in\sigma}\f{\im z \im a}{\abs{z-\overline{a}}^2}\leq C\log\f{1}{\delta'}$ with $C$ an absolute constant;\\
\item[(iv)] The measure $\mu:=\sum_{a\in\sigma\cap Q\setminus\cup_kQ(I_k)}\im a\,  \delta_a$ is a Carleson measure with intensity at most $5M\geq250\log\f{1}{\delta'}$.
\end{enumerate}

Given this collection of intervals, we now form a new collection of intervals in the following manner.  It is here that we exploit the symmetry of $H^\infty_\R(\C_+)$ functions.  This is a point in the proof where the construction of Treil must be modified, and the symmetry allows this.  Set $\mathcal{K}_1:=\{I_k,-I_k\}=\{I_k'\}$.  Then these intervals are closed disjoint sub-intervals (after a possible union of two of them) of an interval of the form $(-L,L)$.  These intervals also possess the property that $I_k', -I_k'\in\mathcal{K}_1$ (again there could be the possibility that $I_k'=-I_k'$ for some $k$).  Set $\tilde{Q}:=Q(I)\cup Q(-I)$.  Furthermore, they have the property that:
\begin{enumerate}
\item[(i)] $\sum\abs{I_k'}\leq \f{1}{5}\abs{I}$;\\
\item[(ii)] $\sum_{a\in\sigma\cap Q(3I_k')}\im a\geq 2\cdot 100\log\f{1}{\delta'}\abs{I_k'}\quad\forall k$;\\
\item[(iii)] If $z\in\tilde{Q}\setminus\cup_kQ(I_k')$, then $\sum_{a\in\sigma}\f{\im z \im a}{\abs{z-\overline{a}}^2}\leq C\log\f{1}{\delta'}$ with $C$ an absolute constant;\\
\item[(iv)] The measure $\mu:=\sum_{a\in\sigma\cap\tilde{Q}\setminus\cup_kQ(I_k')}\im a\, \delta_a$ is a Carleson measure with intensity at most $10M\geq 500\log\f{1}{\delta'}$.
\end{enumerate}

This is a straightforward application of the symmetry that $H^\infty_\R(\C_+)$ functions possess.  We indicate some of this now.  Since Property (i) holds for the collection $\{I_k\}$, by disjointness and symmetry it will hold for $\{-I_k\}$ and hence for $\mathcal{K}_1$.   Property (ii) also holds by symmetry.  Since $\mathcal{K}_1=\{I_k'\}=\{I_k,-I_k\}$, and we know that Property (ii) holds for the collection $\{I_k\}$ by reflection in the y-axis and the symmetry of the zero set of $H^\infty_\R(\C_+)$ functions, Property (ii) holds for $\{-I_k\}$ as well.  Property (iii) and Property (iv) also follow immediately by the symmetry of $H^\infty_\R(\C_+)$ functions

Note that for any interval $J\in\mathcal{K}_1$, we have $\abs{p(z)}<\delta'$ for any $z$ in the top half of $Q(J)$.  This follows by Lemma \ref{BlasEst} and the above construction.  We have
$$
\log\f{1}{\abs{p(z)}}\geq\sum_{a\in\sigma}\f{2\im a\im z}{\abs{z-\overline{a}}^2}\geq\sum_{a\in\sigma\cap Q(3J)}\f{2\im a\im z}{\abs{z-\overline{a}}^2}.
$$
But, for $z$ in the top half of $Q(J)$ and $z\in Q(3J)$, we have $\im z\geq\f{\abs{J}}{2}$ and $\abs{z-\overline{a}}\leq 2\sqrt{2}\abs{J}$.  So by the construction of the intervals in $\mathcal{K}_1$ and the properties that they possess, we have
$$
\log\f{1}{\abs{p(z)}}\geq\sum_{a\in\sigma\cap Q(3J)}\f{1}{8\abs{J}}\im a\geq\f{1}{8}M>\log\f{1}{\delta'}.
$$

We iterate the above construction of construct generations of intervals and corresponding Carleson regions.  Fix an interval $J\in\mathcal{K}_1$ and let $\mathcal{D}(J)$ be the maximal dyadic sub-intervals $J'\subset J$ such that the top half of each $Q(J')$ contains a point $z_0$ where $\abs{p(z_0)}>\delta'$.  Note that, by the symmetry of the function $p$, we will obtain a symmetric selection of intervals.  Since $p$ is a finite Blaschke product, then $\mathcal{D}(J)$ is finite as well.  Moreover, 
$$
J=\bigcup_{J'\in\mathcal{D}(J)}J'.
$$
For each $J\in\mathcal{K}_1$, we set $\mathcal{U}(J):=\textnormal{clos}\left(Q(J)\setminus\cup_{J'\in\mathcal{D}(J)}Q(J')\right)$ and we set $\mathcal{R}_1:=\{\mathcal{U}(J):J\in\mathcal{K}_1\}$.  The set $\mathcal{R}_1$ is the first generation of Carleson regions.  We should note that by symmetry, since $J, -J\in\mathcal{K}_1$, then $\mathcal{U}(J)$ and $\mathcal{U}(-J)$ are symmetric with respect to reflection in the imaginary axis.  Also, it is easy to see that the sets $\mathcal{D}(J)$ and $\mathcal{D}(-J)$ will be symmetric in this manner as well.

For each $J'\in\mathcal{D}(J)$ with $J\in\mathcal{K}_1$, we apply the above construction to obtain a second generation of intervals $\mathcal{K}_2$.  Note that we only need to perform the construction for half the intervals; the other half is obtained by symmetry, i.e., reflection in the imaginary axis.  For each $J\in\mathcal{K}_2$, we form $\mathcal{U}(J)$ and $\mathcal{R}_2:=\{\mathcal{U}(J):J\in\mathcal{K}_2\}$.

Finally, we define $\mathfrak{U}:=\cup_{j\geq 1}\cup_{J\in\mathcal{K}_j}\mathcal{U}(J)$ and $\mathfrak{R}$ as the collection of all connected components of $\mathfrak{U}$.  We also set $\sigma_1=\sigma\setminus\mathfrak{U}$.  Since we have preserved symmetry throughout the construction, we will have the property that $J,-J\in\mathcal{K}_j$.  Also,  $\mathcal{U}(J)$ and $\mathcal{U}(-J)$ will be symmetric with respect to reflection in the imaginary axis.  This implies that the set $\sigma_1$ will also be symmetric, i.e., $a\in\sigma_1$ if and only if $-\overline{a}\in\sigma_1$.  Additionally, by construction, we have the property that if $z\in\mathfrak{U}$ then $\abs{p(z)}\leq\delta'$.

Letting $l_{\partial\mathfrak{U}}$ denote the arc length on the boundary of $\partial\mathfrak{U}$ of the region $\mathfrak{U}$, and letting $\delta_a$ denote the unit mass at the point $a$, the following result is straightforward.

\begin{prop}
\label{CarlesonMeasures}
Let $\mathfrak{U}$ and $\sigma_1$ be as above.  Then
\begin{enumerate}
\item[(i)] The measure $l_{\partial\mathfrak{U}}$ is a Carleson measure with intensity at most $C(\delta)$;\\
\item[(ii)] The measure $\sum_{a\in\sigma_1}\im a\ \delta_a$ is a Carleson measure with intensity at most $C(\delta)$.
\end{enumerate}
\end{prop}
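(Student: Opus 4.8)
Both assertions are of the form ``this measure charges every Carleson square $Q(I_0)$ by at most $C(\delta)\abs{I_0}$'', and in both cases the engine is the $\tfrac15$-contraction of total interval length per generation that is built into the construction: by property (i) of each generation, $\sum_{J\in\mathcal{K}_{j+1},\,J\subseteq\tilde J}\abs{J}\leq\tfrac15\abs{\tilde J}$ for $\tilde J\in\mathcal{K}_j$ (using also that $\mathcal{D}(\tilde J)$ partitions $\tilde J$, so the length does not grow when passing $\mathcal{K}_j\to\mathcal{D}(\cdot)\to\mathcal{K}_{j+1}$), whence $\sum_{j\geq 1}\sum_{J\in\mathcal{K}_j}\abs{J}\leq\tfrac14\abs{I}$. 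The localized version I will use repeatedly is: for any interval $I_0$, if one collects the maximal intervals among $\bigcup_j(\mathcal{K}_j\cup\bigcup_{J\in\mathcal{K}_j}\mathcal{D}(J))$ that are contained in $3I_0$, these are disjoint subintervals of $3I_0$, and below each of them the total length of all $\mathcal{K}$- and $\mathcal{D}$-descendants is a geometric series with ratio $\tfrac15$, so altogether $\sum_{j}\sum_{J\in\mathcal{K}_j,\,J\subseteq 3I_0}\bigl(\abs{J}+\sum_{J'\in\mathcal{D}(J)}\abs{J'}\bigr)\leq C\abs{I_0}$.

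For (i): first observe that the boundary of a single region $\mathcal{U}(J)=\operatorname{clos}\bigl(Q(J)\setminus\bigcup_{J'\in\mathcal{D}(J)}Q(J')\bigr)$ consists of the top edge of $Q(J)$, its (at most two) vertical edges, and, for each $J'\in\mathcal{D}(J)$, the top edge and vertical edges of $Q(J')$; since all these heights are bounded by the corresponding base lengths and $\mathcal{D}(J)$ partitions $J$, we get $l(\partial\mathcal{U}(J))\leq 6\bigl(\abs{J}+\sum_{J'\in\mathcal{D}(J)}\abs{J'}\bigr)=12\abs{J}$. Now fix $I_0$. A region $\mathcal{U}(J)$ can contribute positive length to $Q(I_0)$ only if $J\cap I_0\neq\emptyset$. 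If $\abs{J}\leq\abs{I_0}$ then $J\subseteq 3I_0$, and the total contribution of all such $J$ over all generations is $\leq 12\sum_{j}\sum_{J\in\mathcal{K}_j,\,J\subseteq 3I_0}\bigl(\abs{J}+\sum_{J'\in\mathcal{D}(J)}\abs{J'}\bigr)\leq C\abs{I_0}$ by the localized bound. The remaining $J$'s with $J\cap I_0\neq\emptyset$ and $\abs{J}>\abs{I_0}$ form a nested chain $\tilde J_1\supsetneq\tilde J_2\supsetneq\cdots$ each containing $I_0$, and for these I claim $\partial\mathcal{U}(\tilde J_m)$ never reaches down into $Q(I_0)$: the top edge of $Q(\tilde J_m)$ sits at height $\abs{\tilde J_m}>\abs{I_0}$, above $Q(I_0)$; the vertical edges of $Q(\tilde J_m)$ lie over the endpoints of $\tilde J_m$, which are outside $I_0$; the unique $J'\in\mathcal{D}(\tilde J_m)$ meeting $I_0$ contains $I_0$, so $\abs{J'}\geq\abs{I_0}$ and the top of $Q(J')$ is again above $Q(I_0)$, while every other $J'\in\mathcal{D}(\tilde J_m)$ is disjoint from $I_0$ and so $Q(J')$ misses $Q(I_0)$. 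Hence $l_{\partial\mathfrak{U}}(Q(I_0))\leq C(\delta)\abs{I_0}$.

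For (ii): every zero $a\in\sigma$ that is not absorbed into some $\mathcal{U}(J)$ ``escapes'' at exactly one node of the construction tree — either in $\tilde Q\setminus\bigcup_k Q(I_k')$ at the top level, or in $Q(J')\setminus\bigcup\{\text{$\mathcal{K}_{j+1}$-intervals inside }J'\}$ for some $J'\in\mathcal{D}(J)$, $J\in\mathcal{K}_j$ — and in each case property (iv) of the construction (intensity $\leq 10M=C(\delta)$) says the escaped mass at that node forms a Carleson measure relative to the node's square with intensity $C(\delta)$; in particular the escaped $\sum\im a$ at a node with base $J$ is $\leq C(\delta)\abs{J}$. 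Fix $I_0$. Escapes at nodes with base $J\subseteq 3I_0$ contribute $\leq C(\delta)\sum_{j}\sum_{J\in\mathcal{K}_j,\,J\subseteq 3I_0}\abs{J}\leq C(\delta)\abs{I_0}$ by the localized bound. A node whose base $\tilde J_m$ strictly contains $I_0$ (again a nested chain) contributes \emph{no} mass to $Q(I_0)$, except possibly the innermost one: indeed if the next chain member $\tilde J_{m+1}$ — a $\mathcal{K}_{m+1}$-interval inside some $J'_m\in\mathcal{D}(\tilde J_m)$ — exists and still contains $I_0$, then $Q(I_0)\subseteq Q(\tilde J_{m+1})$, whereas the zeros escaping at node $\tilde J_m$ lie outside $\bigcup\{\text{$\mathcal{K}_{m+1}$-intervals inside }J'_m\}\supseteq Q(\tilde J_{m+1})$, hence outside $Q(I_0)$. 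The innermost large node together with the top-level escape $\tilde Q\setminus\bigcup_k Q(I_k')$ each contribute $\leq C(\delta)\abs{I_0}$ by their Carleson property. Summing, $\sum_{a\in\sigma_1\cap Q(I_0)}\im a\leq C(\delta)\abs{I_0}$.

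The only place genuine care is needed is the bookkeeping of the ``large'' intervals containing $I_0$: for (i) one must check that the boundary of $\mathcal{U}(\tilde J_m)$ stays above and to the sides of $Q(I_0)$, and for (ii) that the escaped zeros of every large node but the innermost lie outside $Q(I_0)$ precisely because they are excluded from the next generation's selected squares, which already engulf $Q(I_0)$. Everything else is the geometric summation over generations powered by the $\tfrac15$-contraction, exactly as in the symmetrized Bourgain--Treil construction.
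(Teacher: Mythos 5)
Your proposal correctly identifies the two engines that drive both estimates — the geometric contraction $\sum_{J\in\mathcal K_{j+1}}|J|\le\tfrac15\sum_{J\in\mathcal K_j}|J|$ (using that $\mathcal D$ partitions) and the node-wise Carleson bound from Lemma~\ref{CarlRegion}(iv) — and the overall split of contributions into ``scales below $|I_0|$'' and ``scales above $|I_0|$'' is the right skeleton. The paper itself only asserts the result and points to the Bourgain--Treil stopping-time construction, so a self-contained argument is welcome. However, the treatment of the large scales is not correct as written.

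The specific problem is the ``nested chain'' claim and its consequences. It is not true that the intervals $J$ with $J\cap I_0\neq\emptyset$ and $|J|>|I_0|$ form a chain each containing $I_0$: at a fixed generation there can be two such intervals straddling $I_0$ from opposite ends without either containing it, and this configuration can recur across generations. More damagingly, even when $\tilde J_m\supsetneq I_0$, the assertion that ``the unique $J'\in\mathcal D(\tilde J_m)$ meeting $I_0$ contains $I_0$'' is unjustified: $\mathcal D(\tilde J_m)$ is a dyadic stopping-time partition of $\tilde J_m$, so $I_0$ can straddle two (or several) of its pieces. When that happens, some $J'\in\mathcal D(\tilde J_m)$ that meet $I_0$ have $|J'|<|I_0|$, and the top edges of $Q(J')$ then sit at height $|J'|<|I_0|$ \emph{inside} $Q(I_0)$; likewise, vertical risers between $\mathcal D$-pieces with differing heights enter $Q(I_0)$. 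So $\partial\mathcal U(\tilde J_m)$ \emph{does} reach into $Q(I_0)$, contrary to what you claim, and these contributions occur at each generation $m$ along the chain. The same straddling phenomenon undermines your argument for (ii): the clinching step ``$Q(I_0)\subseteq Q(\tilde J_{m+1})$, hence the level-$m$ escape misses $Q(I_0)$'' requires $\tilde J_{m+1}\supseteq I_0$, which can fail, in which case the level-$m$ escape region does intersect $Q(I_0)$. Your closing paragraph acknowledges that ``genuine care is needed'' here, but the justification offered (nothing except the innermost node contributes) does not survive the straddling configurations, so the estimate $l_{\partial\mathfrak U}(Q(I_0))\le C(\delta)|I_0|$, resp.\ $\sum_{a\in\sigma_1\cap Q(I_0)}\im a\le C(\delta)|I_0|$, has not actually been established.

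To close the gap one must either (a) restrict to a fixed dyadic grid — so that for dyadic $I_0$ no straddling can occur, and then handle an arbitrary $I_0$ by covering it with two dyadic intervals of length at most $2|I_0|$; this requires knowing that the stopping intervals produced by Lemma~\ref{CarlRegion} are themselves dyadic, which the lemma as stated does not guarantee — or (b) give an explicit bookkeeping of the at most two straddling intervals per generation and show their cumulative contribution inside $Q(I_0)$ telescopes (e.g.\ by tracking the rightmost $\mathcal D$-piece at each level and noting the successive straddling pieces are pairwise disjoint subintervals of $I_0$). Either route requires work that the proposal currently omits, so as written the argument is incomplete.
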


Using the regions $\mathfrak{U}$ and $\sigma_1$, we will construct the function $V$.  The function $V$ is constructed as a finite sum of summands of two types, with each type having two sub-types.  Here again, the construction of Treil must be appropriately modified.  This is necessary because we need to make sure that the function $V$ possesses the symmetry property $V(z)=\overline{V(-\overline{z})}$ for all $z\in\C_+$.

We have to now split the construction of the function $V$ into the cases when the zeros of $p$ are off the imaginary axis and when the zeros are on the imaginary axis.  This is necessary so that we can define the appropriate branch of the logarithm.  When the zeros are off the axis, the construction is straightforward and one can just take the construction in Treil \cite{TreilStable} and symmetrize it appropriately.  However, when the zeros are on the imaginary axis, we need a different construction, different than what appears in \cite{W}.  This is a place where the original paper did not have the appropriate construction.

If the region in question had an even number of real zeros, then we would be fine.  In this case we have that above and below the region, a certain Blaschke product will be positive, and so we can select a branch of the logarithm so that it is real symmetric.  The difficulty in the construction when the zeros are on the imaginary axis is that they could occur in ``groups'' with an odd multiplicity.  If this happens, then it will be impossible to define the branch of the logarithm that is real symmetric, since above the region we are interested a related Blaschke factor will be positive while below the region it will be negative, and so we can not define a branch of the logarithm that is real symmetric.  This difficulty will \textit{not} allow us to define the approximating function $V$ in a simple manner, and instead we will have to ``pair'' certain zeros to overcome this difficulty.  The idea will be to pair regions with an odd number of real zeros to make regions with an even number of real zeros so that we can then appeal to the construction when there are an even number of real zeros.  

Recall that
\begin{equation}
\label{finiteUnion}
\left\{iy:|q(iy)|<\delta'\right\}=\bigcup_{l=1}^N Z_l
\end{equation}
is a finite union of disjoint open intervals $Z_l$.  Note that in this case, since we are supposing that $p$ is positive on the set $\{y\in\R :|q(iy)|<\epsilon\}$ then, between any two consecutive intervals $Z_l$ we will have an \textit{even} number of disjoint open intervals, $\{W_k\}$ where $p$ must change sign.  In particular, we see that between any two consecutive intervals $Z_{l_1}$ and $Z_{l_2}$ the function $p$ must have an even (or none) number of real zeros.  

We must split the construction now into the collection of zeros that are either on the imaginary axis, or those off the imaginary axis.  The two constructions will be similar, though we have to augment the construction when the zeros are on the imaginary axis.  The construction when the zeros are off the imaginary axis is as appears in \cite{W}, while the construction for the zeros on the imaginary axis is new.

\subsection{Zeros off the Imaginary Axis}

\subsubsection{Summands of the First Type: Zeros off the Imaginary Axis}

In this subsection, we do the construction of the summands corresponding to the zeros in $\sigma_1$.  Recall that $b_a(z):=\f{z-a}{z-\overline{a}}$ is the simple Blaschke factor with zero at the point $a\in\C_+$.  One immediately observes that $b_a\in H^\infty_\R(\C_+)$ if and only if $a=-\overline{a}$, namely $a=iy$ lies on the imaginary axis.  We also comment that $\overline{b_a(-\overline{z})}=b_{-\overline{a}}(z)$. This can be applied to a product of terms, and one sees $b_ab_{-\overline{a}}\in H^\infty_\R(\C_+)$.  These will be the elementary building blocks used.

We first deal with the case where when the zeros of $p$ are off the imaginary axis.  Let $D_a$ denote the disc with center at the point $a$ of radius $\delta'\im a$, and let $T_a=\p D_a$.    Let $I_a$ be the vertical slit which connects the circle $T_a$ with the real axis at the point $\re a$, i.e., 
$$
I_a=\{z\in\C_+:\re z=\re a, 0\leq \im z\leq(1-\delta')\im a\}.
$$  
By construction, we have that the symmetric point $-\overline{a}\in\sigma_1$.  We also construct the corresponding disc $D_{-\overline{a}}$ and $T_{-\overline{a}}$ and the corresponding slit $I_{-\overline{a}}$.  The above construction is explained in Figure 1.

\begin{figure}[htbp]
\label{Circle}
\begin{center}
\begin{picture}(400,200)
\put(0,0){\includegraphics{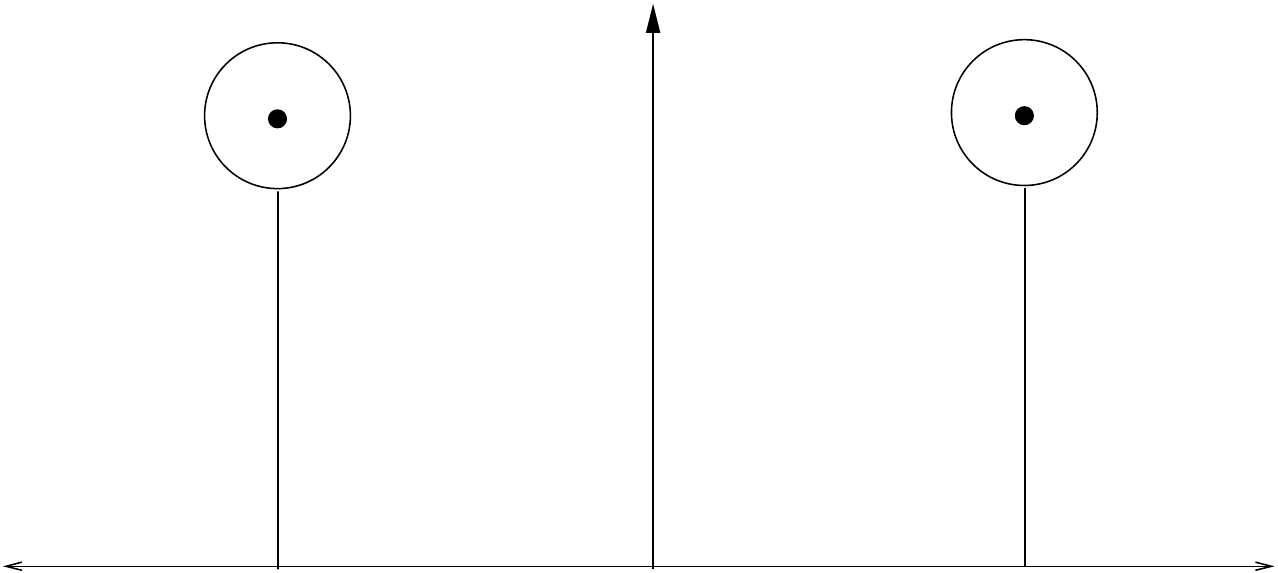}}
\put(375,10){$\mathbb{R}$}
\put(190,170){$i\mathbb{\R}$}
\put(295,120){$a$}
\put(75,120){$-\overline{a}$}
\put(60,80){$I_{-\overline{a}}$}
\put(280,80){$I_a$}
\put(320,150){$T_a$}
\put(100,150){$T_{-\overline{a}}$}
\end{picture}
\end{center}
\caption{}
\end{figure}

Then we have that $\abs{b_a b_{-\overline{a}}}\leq C(\delta,\epsilon)$ on the set $D_a\cup D_{-\overline{a}}$ and $\abs{b_a b_{-\overline{a}}}\geq C(\delta,\epsilon)$ on the compliment in $\C_+$.  We then define a function $\vf:\C_+\setminus(T_a\cup I_a\cup T_{-\overline{a}}\cup I_{-\overline{a}})$ in the following manner
\begin{displaymath}
\vf(z):=\left\{
\begin{array}{lll} 
0 & : & z\in D_a\cup D_{-\overline{a}}\\
\log (b_a b_{-\overline{a}}) & : & \textnormal{otherwise}.
\end{array}\right. 
\end{displaymath}
Here we use the principal branch of the logarithm.  Note that the term $b_ab_{-\overline{a}}\in H^\infty_\R(\C_+)$, and moreover it is positive on the imaginary axis $i\R$, which allows us to define the branch of the logarithm so that it has imaginary part zero.  Thus, we have that $\vf$ is bounded analytic function, with bounded depending on $\delta$ and $\epsilon$, that is real symmetric on the set $\C_+\setminus(T_a\cup I_a\cup T_{-\overline{a}}\cup I_{-\overline{a}})$.    We change $\vf$ in the $\f{\delta'}{100}\im a$--neighborhood of $T_a\cup I_a\cup T_{-\overline{a}}\cup I_{-\overline{a}}$ to obtain a smooth function $V_a$ on $\C_+$ such that 
\begin{itemize}
\item[(i)] $\abs{\pbar V_a(z)}\leq\f{C(\delta,\epsilon)}{\im a}$;\\
\item[(ii)] $\abs{\Delta V_a(z)}\leq\f{C(\delta,\epsilon)}{(\im a)^2}$;\\
\item[(iii)] $V_a(z)=\vf(z)$ if $\textnormal{dist}\left(z, T_a\cup I_a\cup T_{-\overline{a}}\cup I_{-\overline{a}}\right)>\f{\delta'}{100}\im a $;\\
\item[(iv)] $V_a(z)=\overline{V_a(-\overline{z})}$.
\end{itemize}

The function $V_a$ is obtained by the convolution of $\vf$ with a smooth kernel possessing the symmetry property.  Properties (i) and (ii) follow from well known estimates for $H^\infty(\C_+)$ functions.  Property (iii) and (iv) are a simple construction and verification that, if you convolve with a function that possesses the real symmetry property then the function $V_a$ will possess this property as well.  The construction of the mollifying function is straightforward.

We remark that with this construction, when the zeros off the imaginary axis get close to the axis these discs $D_a$ will end up overlapping.  This doesn't present a problem since there is only finite overlap which makes the constants slightly worse.  

One could instead replace the construction with the following one that avoids the finite overlap.  If we used the above construction, then in certain instances, the two circles would intersect, so instead we take one circle about the two points.  The construction is almost identical then.  We let $D_{a,-\overline{a}}$ denote the disc with center $\im a i$ and radius $\delta'\im a$.  When the points are close enough to the imaginary axis $i\R$ the disc $D_{a,-\overline{a}}$ contains the points $a$ and $-\overline{a}$.  We let $T_{a,-\overline{a}}:=\partial D_{a,-\overline{a}}$ denote the boundary of the circle.  We let $I_{a,-\overline{a}}$ denote the vertical slit which connects the boundary $T_{a,-\overline{a}}$ with the real line.  Namely, $I_{a,-\overline{a}}=\{z\in\C_+: \re z=0, 0\leq \im z\leq (1-\delta')\im a\}$.  The construction is explained in Figure 2.

\begin{figure}[htbp]
\label{ellipse}
\begin{center}
\begin{picture}(400,200)
\put(0,0){\includegraphics{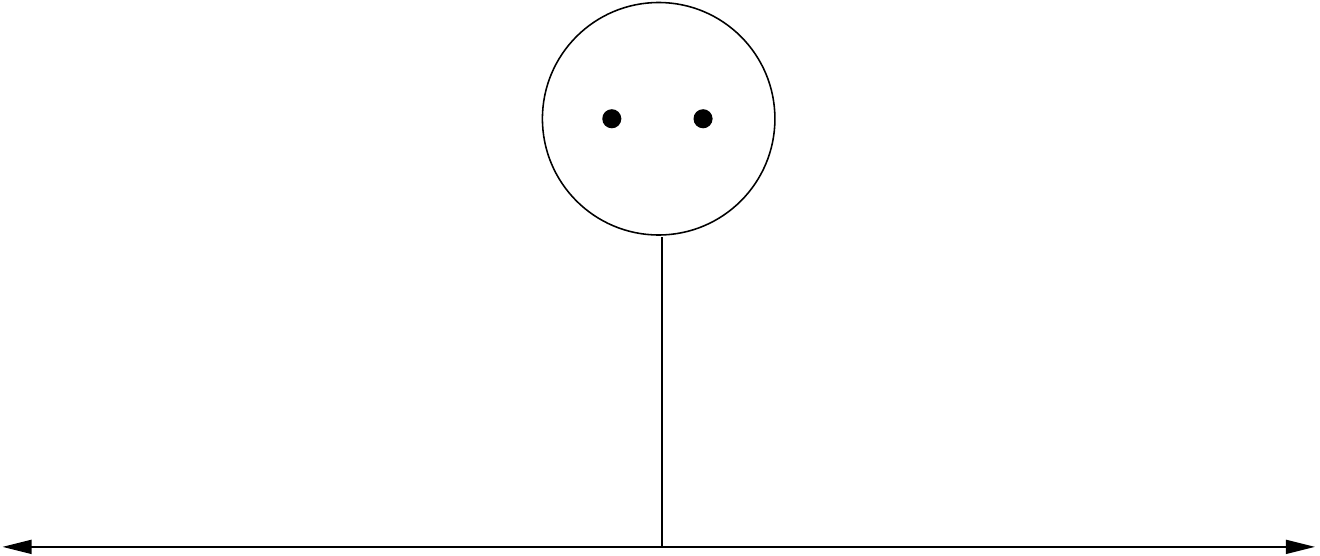}}
\put(375,10){$\mathbb{R}$}
\put(208,120){$a$}
\put(158,120){$-\overline{a}$}
\put(200,50){$I_{a,-\overline{a}}$}
\put(200,170){$T_{a,-\overline{a}}$}
\end{picture}
\end{center}
\caption{}
\end{figure}

Then we will have that $\abs{b_a b_{-\overline{a}}}\leq C(\delta,\epsilon)$ on $D_{a,-\overline{a}}$ and $\abs{b_a b_{-\overline{a}}}\geq C(\delta,\epsilon)$ for points in the compliment.  We then define $\vf$ in the following manner,
\begin{displaymath}
\vf(z):=\left\{
\begin{array}{lll} 
0 & : & z\in D_{a,-\overline{a}}\\
\log (b_a b_{-\overline{a}}) & : & \textnormal{otherwise}.
\end{array}\right. 
\end{displaymath}
Clearly we have that $\varphi$ is analytic, bounded (with a bound depending on $\delta$ and $\epsilon$) and real symmetric because of a similar argument in the case when there are two discs.  We again smooth $\vf$ to find our function $V$.  Namely, we change $\vf$ in a $\f{\delta'}{100}\im a$ neighborhood of $T_{a,-\overline{a}}\cup I_{a,-\overline{a}}$ to obtain a smooth function on $\C_+$ such that:
\begin{itemize}
\item[(i)] $\abs{\pbar V_a(z)}\leq\f{C(\delta,\epsilon)}{\im a}$;\\
\item[(ii)] $\abs{\Delta V_a(z)}\leq\f{C(\delta,\epsilon)}{(\im a)^2}$;\\
\item[(iii)] $V_a(z)=\vf(z)$ if $\textnormal{dist}\left(z, T_{a,-\overline{a}}\cup I_{a,-\overline{a}}\right)>\f{\delta'}{100}\im a$;\\
\item[(iv)] $V_a(z)=\overline{V_a(-\overline{z})}$.
\end{itemize}

\subsubsection{Summands of the Second Type:  Zeros off the Imaginary Axis}

The construction of these summands is slightly more involved.  Let $\mathcal{R}$ be a connected component of $\mathfrak{U}$.  Because the construction of the Carleson contour was done in a symmetric manner, we have two possibilities, either $\mathcal{R}=-\overline{\mathcal{R}}$ or $\mathcal{R}\neq -\overline{\mathcal{R}}$.  In the second case, due to the symmetry of the construction, we must have had that there is a related region that is symmetric with $\mathcal{R}$.  We will pair these together.

Let 
$$
\mathcal{R}_{\delta'}:=\left\{z\in\C_+:\inf_{a\in\mathcal{R}}\abs{b_a(z)}<\f{\delta'}{100}\right\}.
$$
Set $B_\mathcal{R}:=\prod_{a\in\sigma\cap\mathcal{R}}b_a$.  By Lemma \ref{CarlRegion} for any $z\in\partial\mathcal{R}$ we have
$$
\sum_{a\in\sigma\cap\mathcal{R}}\f{\im a\im z}{\abs{z-\overline{a}}^2}\leq C(\delta,\epsilon).
$$
It is straightforward to demonstrate that for any $z\in\p\mathcal{R}_{\delta'}$, we have an identical estimate only with a larger constant.  So, by Lemma \ref{BlasEst} we have $\abs{B_\mathcal{R}(z)}\geq\gamma(\delta,\epsilon)$, which by the maximum modulus principle holds for all $z\notin\p\mathcal{R}_{\delta'}$.

If $\mathcal{R}\neq-\overline{\mathcal{R}}$, then the domains $\mathcal{R}$ contain no real zeros by the construction.  Note, that $B_\mathcal{R}$ is not in $H^\infty_\R(\C_+)$, but $B_\mathcal{R}B_{-\overline{\mathcal{R}}}$ is.  We define 
\begin{displaymath}
\vf(z):=\left\{
\begin{array}{lll} 
0 & : & z\in \mathcal{R}_{\delta'}\cup-\overline{\mathcal{R}}_{\delta'}\\
\log (B_\mathcal{R}B_{-\overline{\mathcal{R}}}) & : & \C_+\setminus(\mathcal{R}_{\delta'}\cup-\overline{\mathcal{R}}_{\delta'}),
\end{array}\right.
\end{displaymath}
for some branch of the logarithm.  We also split the set $\C_+\setminus(\mathcal{R}_{\delta'}\cup-\overline{\mathcal{R}}_{\delta'})$ into connected components and technically define $\vf$ as an appropriate branch of logarithm on each such component.  Again, the function $\vf$ is real symmetric since we can chose a branch of the logarithm that has imaginary part being zero because we have that .  We arrive at a smooth function by convolving $\vf$ with an appropriate real symmetric function.  Then let $V_\mathcal{R}$ be the mollification of the function $\vf$ constructed using information from the region $\mathcal{R}$.  

The splitting is obtained in a similar manner to what appears in Treil's construction.  We recall the splitting that is used in \cite{TreilStable}.  Each component $\mathcal{R}$ is a union of Carleson regions $\mathcal{U}$.  Each region was constructed as 
$$
\mathcal{U}(I)=\textnormal{clos}\left(Q(I)\setminus\bigcup_{J\in\mathcal{D}(I)}Q(J)\right)
$$
where $\mathcal{D}(I)$ was a family of dyadic subintervals of $I$ with $I=\cup_{J\in\mathcal{D}(I)}J$.  For each such dyadic sub-interval $J$ with center $c$ at the point $c$, we draw a vertical interval (slit) $[c,c+i\abs{J}]$.  In addition to these vertical slits, we also need $\Gamma$-slits.  To construct the $\Gamma$-slits, consider a vertical sub-interval of $\p\mathcal{R}$ which is maximal with respect to inclusion.  Let $I=[a+ib,a+ic]$ $a\in\R$ $c>b>0$.  For any integer $k$, $k\geq 1$, we do nothing if $c2^{-k-2}\leq b$.  If $c2^{-k-2}>b$ we draw in $\C_+\setminus\mathcal{R}$ a horizontal interval of length $2\cdot 2^{-k-1}$ with the endpoint $a+ic2^{-k}$.  Then we draw the vertical interval which connects the other endpoint to the real axis.  Note that we can do this construction for one region $\mathcal{R}$ and then symmetry will deal with the corresponding region on the other side of $i\R$.

We need the following proposition from Treil's paper.
\begin{prop}
All slits (vertical and $\Gamma$-slits) corresponding to a component $\mathcal{R}$ are disjoint and the origin of each slit is the only point of its intersection with the component of $\mathcal{R}$.  Moreover, if we consider for each slit $S$ of altitude $d$, its $\f{\delta'}{100}$--neighborhood $S_{\delta'}$ (here we take the usual, not hyperbolic neighborhood), all $S_{\delta'}$ are also disjoint.
\end{prop}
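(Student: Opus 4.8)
The statement is a purely geometric (and combinatorial) fact about the cut system, and the plan is to verify it exactly as in \cite{TreilStable} and \cite{Bourgain}, adding only the bookkeeping forced by the extra reflection symmetry built into the construction of this section. Recall that every component $\mathcal{R}$ of $\mathfrak{U}$ either satisfies $\mathcal{R}=-\overline{\mathcal{R}}$ or is genuinely distinct from $-\overline{\mathcal{R}}$, and that all slits were drawn so that the slits attached to $-\overline{\mathcal{R}}$ are the reflections across $i\R$ of the slits attached to $\mathcal{R}$. Consequently it suffices to prove the three assertions for the slits attached to a single component $\mathcal{R}$ lying (weakly) to the right of $i\R$; the remaining slits, and the interaction between a slit and its mirror image, are then handled by reflection, the last being immediate since a slit and its reflection lie in opposite closed half-planes relative to $i\R$ (or coincide) and their $\frac{\delta'}{100}$-neighborhoods are separated by $i\R$. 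I would first record the elementary dyadic facts to be used repeatedly: within each $\mathcal{U}(I)$ the family $\mathcal{D}(I)$ partitions $I$ into pairwise disjoint dyadic subintervals, each hole $Q(J)$ with $J\in\mathcal{D}(I)$ is strictly smaller than $Q(I)$, later generations refine $Q(J)$ only by strictly smaller dyadic squares, and the centers of two distinct dyadic intervals are always distinct — in fact they differ by at least half the length of the smaller one whenever one interval contains the other.

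\emph{Vertical slits.} The vertical slit over $J\in\mathcal{D}(I)$ lies on the line $\{\re z=c_J\}$, where $c_J$ is the center of $J$; since distinct dyadic intervals have distinct centers, any two vertical slits sit on distinct vertical lines and hence are disjoint. For the claim that such a slit meets $\mathcal{R}$ only at its origin, note that its open part $[c_J,c_J+i|J|)$ runs inside the excised square $Q(J)$, and the dyadic squares used at all later generations inside $Q(J)$ have bases not containing $c_J$ in their interior (again the center of $J$ is not interior to any strictly smaller dyadic subinterval), so the open slit avoids $\mathfrak{U}$ entirely; its top endpoint $c_J+i|J|$, however, lies on the portion of $\partial Q(J)$ that was cut out of $Q(I)$, i.e.\ on $\partial\mathcal{U}(I)\subset\mathcal{R}$, and this is the declared origin and the unique point of the slit on $\mathcal{R}$.

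\emph{$\Gamma$-slits.} Along a maximal vertical subinterval $[a+ib,a+ic]$ of $\partial\mathcal{R}$ the $\Gamma$-slits are placed only at the finitely many heights $c2^{-k}$ with $c2^{-k-2}>b$; the horizontal run of length $2\cdot2^{-k-1}$ is taken in $\C_+\setminus\mathcal{R}$, so each such slit meets $\mathcal{R}$ exactly at the point $a+ic2^{-k}$ where the run begins — which lies in the interior of the vertical piece since $c2^{-k}\in(4b,c/2]\subset(b,c)$ — and the subsequent vertical drop lands in a dyadic column to the side of the throat and never returns to $\mathcal{R}$. Because the heights $c2^{-k}$ form a geometric progression, two $\Gamma$-slits attached to the same vertical piece are vertically separated by at least half the smaller altitude and are thus disjoint; disjointness of $\Gamma$-slits from vertical slits, and of $\Gamma$-slits attached to different maximal vertical pieces of $\partial\mathcal{R}$, is the bulk of the bookkeeping, and uses that distinct maximal vertical pieces of $\partial\mathcal{R}$, and the dyadic columns carrying the vertical slits, are themselves separated by a definite multiple of the relevant local scale — precisely the Whitney/stopping-time structure produced by Lemma \ref{CarlRegion}.

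The main obstacle is the last clause: the $\frac{\delta'}{100}$-neighborhoods $S_{\delta'}$, understood as the points within Euclidean distance $\frac{\delta'}{100}d$ of a slit $S$ of altitude $d$, must be pairwise disjoint, and mere disjointness of the slits is not enough because $\delta'$ is small — one must show that any two slits are separated by at least a fixed positive multiple, depending only on $\delta'$, of \emph{each} of their altitudes. Within a single component this reduces to checking that every slit has altitude comparable to the local scale of the contour $\partial\mathcal{R}$ near it: two vertical slits over adjacent dyadic intervals have carrying lines separated by half the smaller length, which is exactly the smaller altitude; the $\Gamma$-slits were given horizontal length matched to their height $c2^{-k}$ and were drawn on the $\mathcal{R}$-free side; and the mixed cases combine the two estimates. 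The delicate point — and where I would invoke rather than reprove the geometry of Treil's contour — is ruling out degenerate-looking configurations (a tiny hole-interval sitting near the center of a much larger one, with both carrying slits \emph{of the same component}, or a $\Gamma$-slit whose vertical drop skims a vertical slit of a neighboring column); here one uses that a component of $\mathfrak{U}$ is built from Whitney-chained Carleson regions, so holes carrying slits of a single component cannot have wildly incommensurable scales, and the $\Gamma$-slits are exactly the device interpolating across the throats where the scale would otherwise jump. Finally, the reflection symmetry of the whole construction guarantees that all these bounds are inherited verbatim by the mirrored configurations, which closes the remaining cases.
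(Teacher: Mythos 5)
The paper's proof of this proposition consists of a single sentence deferring entirely to Treil's argument in \cite{TreilStable}, remarking only that the forced symmetry of the Carleson regions does not disturb the geometry. Your proof takes essentially that same route --- reduce by reflection symmetry to a single component, then run the dyadic/Whitney bookkeeping for vertical and $\Gamma$-slits --- and at the one genuinely quantitative step (that distinct slits are separated by a definite multiple of each of their altitudes, so the $\frac{\delta'}{100}$-neighborhoods stay disjoint) you explicitly invoke Treil's contour geometry, exactly as the paper does.
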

The proof of this Proposition is a direct repeat of what appears in Treil's work \cite{TreilStable}, so we omit it.  We simply remark that we have forced additional symmetry upon our Carleson regions so that we can arrive at functions with certain symmetry properties.  The figure below explains the general construction.

\begin{figure}[htbp]
\label{region1}
\begin{center}
\begin{picture}(400,200)
\put(0,0){\includegraphics{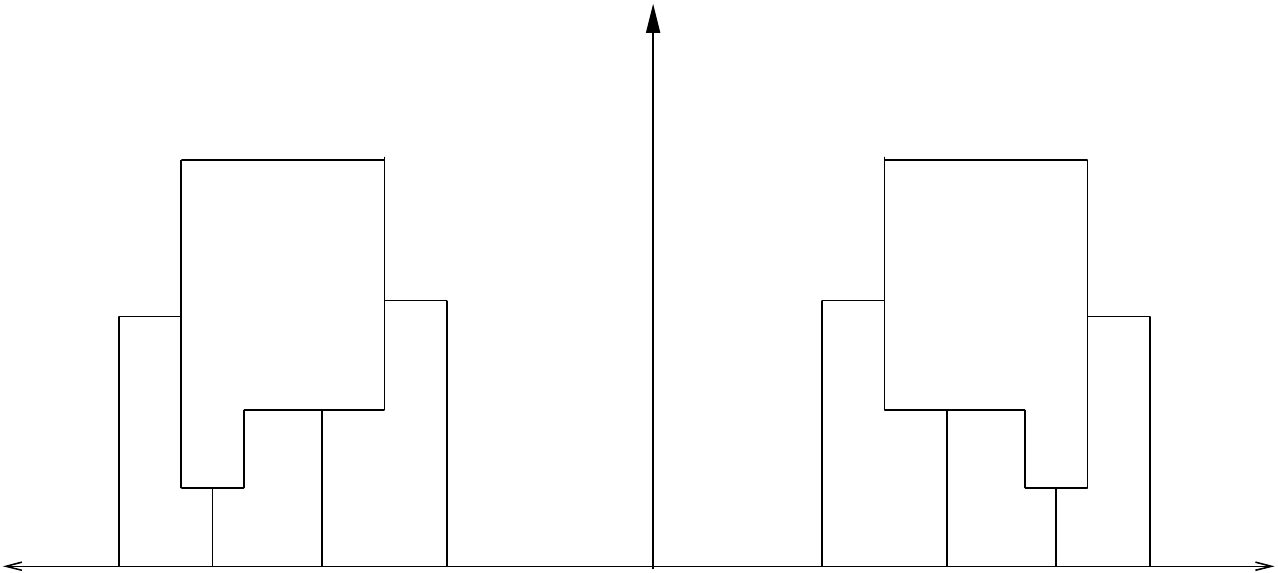}}
\put(205, 165){$i\mathbb{R}$}
\put(375,10){$\mathbb{R}$}
\put(75,68){$\mathcal{R}$}
\put(275,68){$-\overline{\mathcal{R}}$}
\put(220,68){$\Gamma$}
\put(260,16){$S$}
\end{picture}
\end{center}
\caption{}
\end{figure}

The precise construction of the function $V$ in this case, can be obtained by simply symmetrizing the construction appearing in \cite{TreilStable}.  For ease of the reader, we include the details.

The slits constructed divide $\partial\mathcal{R}$ (or, equivalently, the boundary $\p\mathcal{R}_{\delta'}$ of $\mathcal{R}_{\delta'}$) into arcs with hyperbolic length bounded by some constant, depending only on an absolute constant.  Note that we have a similar decomposition for the boundary of $-\overline{\mathcal{R}}$ due to the symmetry of the construction above.

Next, recall that for $H^\infty(\C_+)$ functions we have $\abs{f'(z)}\leq\f{\norm{f}_\infty}{\im z}$, and since $(\log f)'=\f{f'}{f}$, we will have 
$$
\abs{\log B_{\mathcal{R}}(z)B_{-\overline{\mathcal{R}}}(z)-\log B_{\mathcal{R}}(\xi)B_{-\overline{\mathcal{R}}}(\xi)}\leq C(\delta,\epsilon)
$$
for any $z,\xi\in\p\mathcal{R}_{\delta'}$.  The slits split the set $\C_+\setminus(\mathcal{R}_{\delta'}\cup-\overline{\mathcal{R}}_{\delta'})$ into connected components.  We then define, in each such domain $E$, the function $\vf$ as a branch of $\log B_{\mathcal{R}}B_{-\overline{\mathcal{R}}}$ for which $0=\textnormal{Im} \log B_\mathcal{R}(z)B_{-\overline{\mathcal{R}}}(z)$ if $z\in\textnormal{clos} E\cup\p\mathcal{R}_{\delta'}$.  Further note that on $i\R$ both above and below the domain, we will have the property that $B_\mathcal{R}B_{-\overline{\mathcal{R}}}$ is positive since there are an even number of zeros.  Note that by construction the function $\varphi$ will be real symmetric.  The jumps of $\vf$ on the slits, and on the boundary of $\mathcal{R}_{\delta'}\cup -\overline{\mathcal{R}}_{\delta'}$, are bounded by a constant $C(\delta,\epsilon)$.  Let $\Gamma^{\delta'}_{\mathcal{R}}$ denote the hyperbolic $\f{\delta'}{100}$--neighborhood of $\partial\mathcal{R}_{\delta'}\cup \partial\left(-\overline{\mathcal{R}}_{\delta'}\right)$, i.e., 
$$
\Gamma^{\delta'}_{\mathcal{R}}:=\left\{z\in\C_+:\inf_{a\in\partial\mathcal{R}_{\delta'}\cup \partial\left(-\overline{\mathcal{R}}_{\delta'}\right)}\abs{b_a(z)}<\f{\delta'}{100}\right\}.
$$
Also, for a slit $S$ of altitude $d$ let $S_{\delta'}$ be its $\f{\delta'}{100}$--neighborhood (usual \textit{not} hyperbolic) of $S$, i.e.,
$$
S_{\delta'}:=\left\{z\in\C_+:\textnormal{dist}(z,S)<\f{\delta'}{100}\right\}.
$$

Because of the trivial estimate $\abs{f'(z)}\leq\f{\norm{f}_{\infty}}{\im z}$ for $f\in H^\infty(\C_+)$, and since $\vf'=(\log B_{\mathcal{R}}B_{-\overline{\mathcal{R}}})'$, if $z\in\Gamma^{\delta'}_{\mathcal{R}}$ we have
$$
\abs{\vf'(z)}\leq\f{C(\delta,\epsilon)}{\im z}\quad\textnormal{for }z\in\Gamma^{\delta'}\setminus\left(\p\mathcal{R}_{\delta'}\cup\p \left(-\overline{\mathcal{R}}_{\delta'}\right)\right).
$$

Since the Blaschke product $B_\mathcal{R}B_{-\mathcal{R}}$ has no zeros is $\C_+\setminus(\mathcal{R}\cup-\overline{\mathcal{R}})$ has no zeros is $\C_+\setminus\left(\mathcal{R}\cup -\overline{\mathcal{R}}\right)$, it is analytic on the set $\C\setminus\overline{\mathcal{R}}$ (the bar denotes complex conjugation), and therefore, 
$$
\abs{(B_{\mathcal{R}}B_{-\overline{\mathcal{R}}})'(z)}\leq\f{C}{\textnormal{dist}(z,\mathcal{R}\cup-\overline{\mathcal{R}})}.
$$
Hence, for any slit $S$ we have
$$
\abs{\vf'(z)}\leq\f{C(\delta,\epsilon)}{d}\quad\textnormal{for } z\in S_{\delta'}\setminus S.
$$
These estimates and the boundedness of the jumps of $\vf$ allow one to change the function $\vf$ on the set $\Gamma^{\delta'}_{\mathcal{R}}\cup\bigcup_{S\in\mathcal{S}}S_{\delta'}$, where $\mathcal{S}$ denotes the collection of all slits for the component $\mathcal{R}$, to obtain a function $V_\mathcal{R}$ satisfying
\begin{displaymath}
V_\mathcal{R}=\vf(z)\quad\forall z\notin\Gamma^{\delta'}_{\mathcal{R}}\cup\left(\bigcup_{S\in\mathcal{S}}S_{\delta'}\right),
\end{displaymath}
\begin{displaymath}
\abs{V_\mathcal{R}'(z)}\leq\f{C(\delta,\epsilon)}{\im z},\qquad \abs{\Delta V_\mathcal{R}(z)}\leq\f{C(\delta,\epsilon)}{(\im z)^2}\quad\forall z\in\Gamma^{\delta'}_{\mathcal{R}},
\end{displaymath}
\begin{displaymath}
\abs{V_\mathcal{R}'(z)}\leq\f{C(\delta,\epsilon)}{d},\qquad \abs{\Delta V_\mathcal{R}(z)}\leq\f{C(\delta,\epsilon)}{d^2}\quad\forall z\in S_{\delta'}.
\end{displaymath}
The function $V_\mathcal{R}$ will be smooth by taking the convolution of $\vf$ with an appropriate smooth kernel.

\begin{rem}
Note that the construction in this case handles the situation when $\mathcal{R}$ is `far' from the imaginary axis.  If the region $\mathcal{R}$ is very close to the imaginary axis, then when we consider the regions $\mathcal{R}_{\delta'}$ and $-\overline{\mathcal{R}}_{\delta'}$ then it could be the case they over lap.  If that is the situation, then the argument used to construct the approximating function $V_{\mathcal{R}}$ will actually be given by the case when there are an even number of zeros in the domain $\mathcal{R}\cap i\R$ (since there are actually none).  In that case the argument given below will handle the construction.
\end{rem}

\subsection{Zeros on the Imaginary Axis}

We are left handling the case where there are zeros in $\sigma_1\cap i\R$ and when $\mathcal{R}=-\overline{\mathcal{R}}$.  In these cases we have to pair them appropriately to guarantee that the appropriate branch of the logarithm will exist.  This was overlooked in the original version \cite{W}.

Note that if we had $\mathcal{R}=-\overline{\mathcal{R}}$ and there were an \textit{even} number of zeros of the function $p$ in $i\R\cap\mathcal{R}$, then the argument given above works to handle this case.  Indeed, we have that $B_{\mathcal{R}}\in H^\infty_{\R}(\C_+)$ due to the symmetry of the domain $\mathcal{R}$.  Also, since the domain $\mathcal{R}$ has an even number of zeros in $i\R$, it is obvious that the function $B_{\mathcal{R}}$ is positive on $i\R$ both above and below the region $\mathcal{R}$.  Then one repeats the construction given above, or more precisely the one found in \cite{TreilStable}, and due to the symmetry of $\mathcal{R}$ and the even number of zeros in $i\R$, we obtain the function $V_{\mathcal{R}}$ satisfying the necessary properties.

We defined 
\begin{displaymath}
\vf(z):=\left\{
\begin{array}{lll} 
0 & : & z\in \mathcal{R}_{\delta'}\\
\log (B_\mathcal{R}) & : & \C_+\setminus\mathcal{R}_{\delta'},
\end{array}\right.
\end{displaymath}
for some branch of the logarithm that is real symmetric.  Repeating the argument from above, it is possible to then split the domain $\C_+\setminus\mathcal{R}$ into connected components via slits and $\Gamma$-slits, and enlarge the slits by hyperbolic neighborhoods so that they are disjoint.  We can then define in each such domain a branch of logarithm that is analytic, bounded and real symmetric since $B_{\mathcal{R}}$ has an even number of zeros.

These estimates and the boundedness of the jumps of $\vf$ allow one to change the function $\vf$ on the set $\Gamma^{\delta'}_{\mathcal{R}}\cup\bigcup_{S\in\mathcal{S}}S_{\delta'}$, where $\mathcal{S}$ denotes the collection of all slits for the component $\mathcal{R}$, to obtain a function $V_\mathcal{R}$ satisfying
\begin{displaymath}
V_\mathcal{R}=\vf(z)\quad\forall z\notin\Gamma^{\delta'}_{\mathcal{R}}\cup\left(\bigcup_{S\in\mathcal{S}}S_{\delta'}\right),
\end{displaymath}
\begin{displaymath}
\abs{V_\mathcal{R}'(z)}\leq\f{C(\delta,\epsilon)}{\im z},\qquad \abs{\Delta V_\mathcal{R}(z)}\leq\f{C(\delta,\epsilon)}{(\im z)^2}\quad\forall z\in\Gamma^{\delta'}_{\mathcal{R}},
\end{displaymath}
\begin{displaymath}
\abs{V_\mathcal{R}'(z)}\leq\f{C(\delta,\epsilon)}{d},\qquad \abs{\Delta V_\mathcal{R}(z)}\leq\f{C(\delta,\epsilon)}{d^2}\quad\forall z\in S_{\delta'}.
\end{displaymath}
The function $V_\mathcal{R}$ will be smooth by taking the convolution of $\vf$ with an appropriate smooth kernel.

However, in the case where the region $\mathcal{R}$ has an odd number of zeros in $i\R$ this construction fails to produce a real symmetric branch of the logarithm.  The reason for this is the following the corresponding Blaschke product $B_\mathcal{R}$ will have a different sign on $i\R$ above and below $\mathcal{R}$.   Where it is negative, the imaginary part of logarithm should be $\pi +2\pi k$, while any continuous symmetric branch of logarithm should have imaginary part being zero.  To remedy this, we must appropriately pair the domains $\mathcal{R}$ and the zeros from $\sigma_1\cap i\R$ to produce regions with an even number of zeros in $i\R$.  Once we have constructed these new regions, then we can (essentially) apply the observation about constructing the approximating function when there were an even number of zeros.

Recall that \eqref{finiteUnion} gives us
$$
\{iy:|q(iy)|<\delta'\}=\bigcup_{l=1}^N Z_l
$$
where $Z_l$ is are open disjoint intervals contained in $i\R$.  Now, a simple observation, given a symmetric connected region $\mathcal{R}$ then the region must intersect the imaginary axis $i\R$ in an interval, and the interval must lie between the two consecutive intervals $Z_{l_1}$ and $Z_{l_2}$.  Indeed, must have that $p(iy)\geq C(\delta, \epsilon)$ on each of the intervals $Z_{l_1}$ and $Z_{l_2}$, but by the construction of the region $\mathcal{R}$, we had to have that for $iy\in\mathcal{R}$ that $|p(iy)|<\delta'$.  Thus, if the region $\mathcal{R}$ contains an odd number of zeros in $i\R$, there must be another ``region'' $\mathcal{R'}$ that contains an odd number of zeros that also intersects $i\R$ between the intervals $Z_{l_1}$ and $Z_{l_2}$ (here a ``region'' is either an $\mathcal{R}$ or a point in $\sigma_1$).  Note that this implies that between consecutive intervals $Z_{l_1}$ and $Z_{l_2}$ there must be an \textit{even} number of these regions containing an \textit{odd} number of zeros in $i\R$.  This implies that we can pair these consecutive regions with an odd number of zeros, producing regions with even numbers of zeros in $i\R$.  In doing this, we will be able to construct the the logarithm that is real symmetric.

We now describe how to pair these regions so that we can define the appropriate logarithm.  

\subsubsection{Connecting regions \texorpdfstring{$\mathcal{R}$}{R} and \texorpdfstring{$\mathcal{R}'$}{R'} with an ``odd'' number of zeros}
First, suppose that we have to pair two regions of the form $\mathcal{R}$ and $\mathcal{R}'$ that are symmetric connected components between the intervals $Z_{l_1}$ and $Z_{l_2}$.  Now by the construction each of these domains is symmetric, and the projection of one of these domains onto $\R$ is wholely contained in the other.  So, without loss of generality, we can assume that $\mathcal{R}$ lies above $\mathcal{R}'$. 

Now note that the function $B_{\mathcal{R}}B_{\mathcal{R}'}\in H^\infty_\R(\C_+)$ since the domains $\mathcal{R}$ and $\mathcal{R}'$ are symmetric.  We also have that the function $B_{\mathcal{R}}B_{\mathcal{R}'}$ is positive on $i\R$ above the domain $\mathcal{R}$ and below the domain $\mathcal{R}'$.  We then connect these two domains via a slit $I$ on the imaginary axis $i\R$.  We can then take all the corresponding slits from the region $\mathcal{R}$, except the slit on the imaginary axis connecting $\mathcal{R}$ and $\R$.  We can also then take the slits for the domain $\mathcal{R}'$ and this decomposes the domain $\C_+\setminus\left(\mathcal{R}\cup\mathcal{R}'\cup I\right)$ into connected components.  By construction of the slits and the domains $\mathcal{R}$ and $\mathcal{R}'$ we have that the slits are disjoint, and similarly the $\delta'$ neighborhoods of the slits will be disjoint.  We are now in the setup where we have a symmetric region with an even number of zeros and so the discussion above holds.

We then define, in a similar manner as above, 
\begin{displaymath}
\vf(z):=\left\{
\begin{array}{lll} 
0 & : & z\in \mathcal{R}_{\delta'}\cup\mathcal{R}'_{\delta'}\cup I_{\delta'}\\
\log (B_\mathcal{R}B_{\mathcal{R}'}) & : & \C_+\setminus\left(\mathcal{R}_{\delta'}\cup\mathcal{R}'_{\delta'}\cup I_{\delta'}\right),
\end{array}\right.
\end{displaymath}
for an appropriate branch of the logarithm that is real symmetric.  Repeating the argument from above, it is possible to then split the domain $\C_+\setminus(\mathcal{R}_{\delta'}\cup\mathcal{R}'_{\delta'}\cup I_{\delta'})$ into connected components via slits and $\Gamma$-slits, and enlarge the slits by hyperbolic neighborhoods so that they are disjoint.  We can then define in each such domain a branch of logarithm that is analytic, bounded and real symmetric since $B_{\mathcal{R}}B_{\mathcal{R}'}$ has an even number of zeros on $i\R$.  The construction is explained in the figure below.\\

\begin{figure}[htbp]
\label{region1_2}
\begin{center}
\begin{picture}(400,200)
\put(0,0){\includegraphics{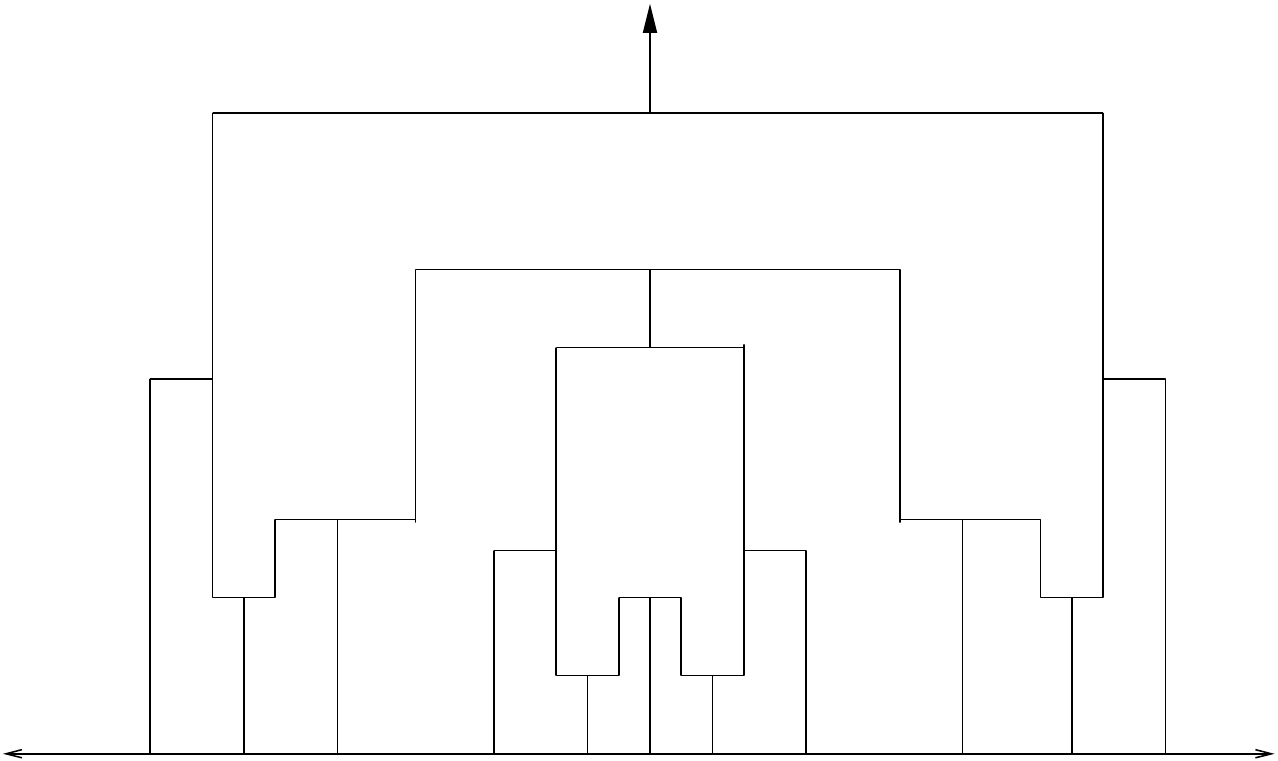}}
\put(205, 205){$i\mathbb{R}$}
\put(375,10){$\mathbb{R}$}
\put(185,168){$\mathcal{R}$}
\put(185,68){$\mathcal{R}'$}
\put(195,128){$I$}
\put(220,68){$\Gamma$}
\put(265,16){$S$}
\end{picture}
\end{center}
\caption{}
\end{figure}

As before, let $\mathcal{S}$ denotes the collection of all slits for the component $\mathcal{R}$ and $\mathcal{R}'$ and the slit joining these regions together (except the slit connecting $\mathcal{R}$ to $\R$) and let $\Gamma^{\delta'}_{\mathcal{R},\mathcal{R}'}$ denote the $\frac{\delta'}{100}$ hyperbolic neighborhood of the region $\mathcal{R}\cup\mathcal{R}'\cup I$.  These estimates and the boundedness of the jumps of $\vf$ allow one to change the function $\vf$ on the set $\Gamma^{\delta'}_{\mathcal{R},\mathcal{R}'}\cup\bigcup_{S\in\mathcal{S}}S_{\delta'}$ to obtain a function $V_{\mathcal{R},\mathcal{R}'}$ satisfying
\begin{displaymath}
V_{\mathcal{R},\mathcal{R}'}=\vf(z)\quad\forall z\notin\Gamma^{\delta'}_{\mathcal{R},\mathcal{R}'}\cup\left(\bigcup_{S\in\mathcal{S}}S_{\delta'}\right),
\end{displaymath}
\begin{displaymath}
\abs{V_{\mathcal{R},\mathcal{R}'}'(z)}\leq\f{C(\delta,\epsilon)}{\im z},\qquad \abs{\Delta V_{\mathcal{R},\mathcal{R}'}(z)}\leq\f{C(\delta,\epsilon)}{(\im z)^2}\quad\forall z\in\Gamma^{\delta'}_{\mathcal{R},\mathcal{R}'},
\end{displaymath}
\begin{displaymath}
\abs{V_{\mathcal{R},\mathcal{R}'}'(z)}\leq\f{C(\delta,\epsilon)}{d},\qquad \abs{\Delta V_{\mathcal{R},\mathcal{R}'}(z)}\leq\f{C(\delta,\epsilon)}{d^2}\quad\forall z\in S_{\delta'}.
\end{displaymath}
As before, to obtain the desired function $V_{\mathcal{R},\mathcal{R}'}$, simply convolve the function $\vf$ with an appropriate symmetric smooth kernel.

\subsubsection{Connecting a region $\mathcal{R}$ with a zero in $\sigma_1$}

The next case is when we have to pair a region $\mathcal{R}$ with and odd number of zeros in $i\R$ and a zero $a\in\sigma_1\cap i\R$.  We describe the case when the zero in $\sigma_1\cap i\R$ lies below the domain $\mathcal{R}$ since the opposite situation is similar.  
We connect the region $\mathcal{R}_{\delta'}$ and the region $D_{a}$ by a slit $I_{\mathcal{R},a}$ on the imaginary axis.  We then take the slits corresponding to the region $\mathcal{R}$, except for the slit on the imaginary axis connecting the region $\mathcal{R}$ with $\R$.  We finally connect the boundary of $D_a$ to the real axis by a slit $I$ on the imaginary axis.

Now consider the function $B_{\mathcal{R}}b_{a}\in H^\infty_\R(\C_+)$ which has an even number of zeros, and so, when we consider the simply connected regions given by the splitting of $\C_+\setminus(\mathcal{R}_{\delta'}\cup D_a\cup I_{\delta'})$  using the appropriate slits, we can define a real symmetric branch of the logarithm.    

In this situation, we now define 
\begin{displaymath}
\vf(z):=\left\{
\begin{array}{lll} 
0 & : & z\in \mathcal{R}_{\delta'}\cup D_a\cup I_{\delta'}\\
\log (B_\mathcal{R}b_a) & : & \C_+\setminus\left(\mathcal{R}_{\delta'}\cup D_a \cup I_{\delta'}\right),
\end{array}\right.
\end{displaymath}
where we choose a branch of the logarithm that is real symmetric.  A repetition of the argument from above, shows that it is possible to then split the domain $\C_+\setminus(\mathcal{R}_{\delta'}\cup D_a\cup I_{\delta'})$ into connected components via slits and $\Gamma$-slits, and enlarge the slits by hyperbolic neighborhoods so that they remain disjoint.  We can then define in each such domain a branch of logarithm that is analytic, bounded and real symmetric since $B_{\mathcal{R}}b_a$ has an even number of zeros on $i\R$.

As above we let $\mathcal{S}$ denotes the collection of all slits for the component $\mathcal{R}$ (with out the slit joining $\mathcal{R}$ to $\R$) and the slit joining $\mathcal{R}$ and $D_a$ and the slit joining together $D_a$ and $\R$.  Let $\Gamma^{\delta'}_{\mathcal{R}}$ denote the $\frac{\delta'}{100}$ hyperbolic neighborhood of the region $\mathcal{R}$.  Let $T_a^{\delta'}$ denote the $\delta'\im a$ neighborhood of $T_a$.  The estimates from above and the boundedness of the jumps of $\vf$ allow one to change the function $\vf$ on the set $\Gamma^{\delta'}_{\mathcal{R},\mathcal{R}'}\cup\bigcup_{S\in\mathcal{S}}S_{\delta'}\cup T_a^{\delta'}$ to obtain a function $V_{\mathcal{R},a}$ satisfying
\begin{displaymath}
V_{\mathcal{R},a}=\vf(z)\quad\forall z\notin\Gamma^{\delta'}_{\mathcal{R}}\cup\left(\bigcup_{S\in\mathcal{S}}S_{\delta'}\right)\cup T_a^{\delta'},
\end{displaymath}
\begin{displaymath}
\abs{V_{\mathcal{R},a}'(z)}\leq\f{C(\delta,\epsilon)}{\im z},\qquad \abs{\Delta V_{\mathcal{R},a}(z)}\leq\f{C(\delta,\epsilon)}{(\im z)^2}\quad\forall z\in\Gamma^{\delta'}_{\mathcal{R}},
\end{displaymath}
\begin{displaymath}
\abs{V_{\mathcal{R},a}'(z)}\leq\f{C(\delta,\epsilon)}{d},\qquad \abs{\Delta V_{\mathcal{R},a}(z)}\leq\f{C(\delta,\epsilon)}{d^2}\quad\forall z\in S_{\delta'}.
\end{displaymath}
Convolution of $\vf$ produces the smooth function $V_{\mathcal{R},a}$ that we seek.

\subsubsection*{Connecting two regions corresponding to zeros in $\sigma_1$}

The final case is where we have to connect two real zeros in $\sigma_1$.  Let $a, a'$ denote the zeros labeled so that $a$ is the point closest to the real axis.  About $a$ let $D_{a}$ denote the disc with center at the point $a$ of radius $\delta'\im a$, and let $T_{a}=\p D_{a}$.  Repeat this construction for the point $a'$.  Then connect these discs together by drawing a vertical slit on the imaginary axis denoted $I_{a, a'}$ along the imaginary axis connecting $D_{a}$ to $D_{a'}$.  Finally, draw the slit $I_{0,a}$ connecting the boundary of the disc $D_{a}$ with the real axis.  The construction is explained in Figure \hyperref[tworealzeros]{5} below.\\

\begin{figure}[htbp]
\label{tworealzeros}
\begin{center}
\begin{picture}(400,200)
\put(0,0){\includegraphics{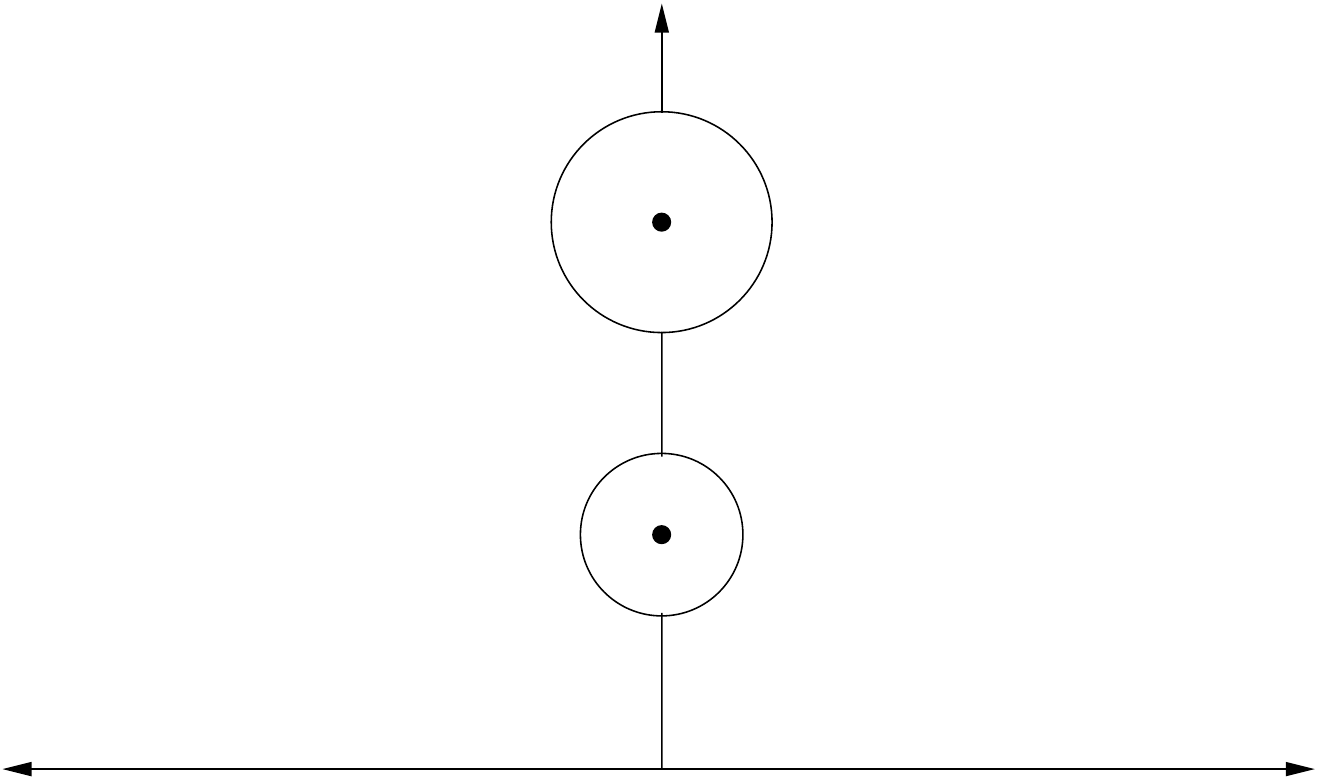}}
\put(205, 225){$i\mathbb{R}$}
\put(375,10){$\mathbb{R}$}
\put(200,68){$a$}
\put(200,160){$a'$}
\put(200,108){$I_{a,a'}$}
\put(230,68){$T_{a}$}
\put(230,160){$T_{a'}$}
\end{picture}
\end{center}
\caption{}
\end{figure}

Then, one notes that $\abs{b_{a}b_{a'}}\leq C(\delta,\epsilon)$ on $D_{a}\cup D_{a'}\cup I_{a,a'}$ and $\abs{b_{a}b_{a'}}\geq C(\delta,\epsilon)$ for points in the compliment.  We then define $\vf$ in the following manner,
\begin{displaymath}
\vf(z):=\left\{
\begin{array}{lll} 
0 & : & z\in D_{a}\cup D_{a'}\cup I_{a,a'}\\
\log \left(b_{a}b_{a'}\right) & : & \textnormal{otherwise}.
\end{array}\right. 
\end{displaymath}
Note that we have $b_ab_{a'}$ is real and positive on $i\R$ above the point $a'+\delta'\im a'$ and below $a-\delta'\im a$.  Clearly we have that $\varphi$ is analytic, bounded (with a bound depending on $\delta$ and $\epsilon$) and real symmetric.  We again smooth $\vf$ to find our function $V$.  Namely, we change $\vf$ in a $\f{\delta'}{100}\min\{\im a, \im a'\}$ neighborhood of $I_{0,a}\cup T_{a}\cup I_{a,a'}\cup T_{a'}$ to obtain a smooth function on $\C_+$ such that:
\begin{itemize}
\item[(i)] $\abs{\pbar V_{a,a'}(z)}\leq C(\delta,\epsilon)\min\left\{\frac{1}{\im a'},\frac{1}{\im a}\right\}$;\\
\item[(ii)] $\abs{\Delta V_{a,a'}(z)}\leq C(\delta,\epsilon)\min\left\{\frac{1}{(\im a')^2},\frac{1}{(\im a)^2}\right\}$;\\
\item[(iii)] $V_{a_1,a_2}(z)=\vf(z)$ if $\textnormal{dist}\left(z, T_{a}\cup I_{a,a'}\cup T_{a'}\right)>\f{\delta'}{100}\min\{\im a, \im a'\}$;\\
\item[(iv)] $V_{a,a'}(z)=\overline{V_{a,a'}(-\overline{z})}$.
\end{itemize}

\begin{rem}
We remark that the constructions in the cases above, in certain situations the construction could be greatly simplifed.  For example, in the case of of $a,a'\in\sigma_1\cap i\R$ it could be the case that the discs $D_a$ and $D_{a'}$ could overlap, and so the resulting slit connecting those discs on the imaginary axis $i\R$ could be ignored.  The construction would then proceed in a similar manner as the original construction of $V_{a}$ with $a\in\sigma_1\cap(\C_+\setminus i\R)$.  A similar remark applies in the case of $\mathcal{R}=-\overline{\mathcal{R}}$, $\mathcal{R}'=-\overline{\mathcal{R}'}$ with $\mathcal{R}\cap\mathcal{R}'\cap i\R$ having an even number of zeros, and when we have $\mathcal{R}=-\overline{\mathcal{R}}$ with $\mathcal{R}\cap i\R$ having an odd number of zeros and $a\in\sigma_1\cap i\R$.  

The constructions in these cases are easier since they are simply repeats of the ideas and arguments given above.  For brevity, we have omitted these construction in these cases and focused on the more ``general'' situation.
\end{rem}

\subsection{Definition of the Function $V$}

The function $V$ is then defined as the sum of all summands of the the zeros on the imaginary axis, and the zeros off the imaginary axis
$$
V=\sum_{a\in\sigma_1\cap\left(\C_+\setminus i\R\right)} V_a+\sum_{\stackrel{\mathcal{R}\in\mathfrak{R}}{\mathcal{R}\neq -\overline{\mathcal{R}}}} V_\mathcal{R}+\sum_{a,a'\in\sigma_1\cap i\R} V_{a,a'}+\sum_{\stackrel{\mathcal{R}\in\mathfrak{R}}{\mathcal{R}=-\overline{\mathcal{R}}}} V_{\mathcal{R},\mathcal{R}'}+\sum_{\stackrel{\mathcal{R}\in\mathfrak{R},\mathcal{R}=-\overline{\mathcal{R}}}{a\in\sigma_1\cap i\R}} V_{\mathcal{R},a}.
$$
For simplicity in this display we have suppressed the distinction between $\mathcal{R}$ having an even or odd number of zeros in $\mathcal{R}\cap i\R$.  In the construction, there were two estimates that arose, those from terms like $V_a$ and those from terms like $V_{\mathcal{R}}$.  When we prove the remaining properties below, we can just appeal to the appropriate estimates.

By construction, $V$ will be real symmetric.  Therefore, it only remains to show that $V$ satisfies the required conditions (i), (ii) and (iv) from Section \ref{ideaofproof}.

\section{\texorpdfstring{Verification of the Properties of the Function $V$}{Verification of the Properties of the Function V}}
\label{Visgood}

With $V$ now constructed, we need only show that it possesses all the required properties.  First, we need an auxiliary definition and some propositions.  These lemmas and propositions are taken from Treil's paper \cite{TreilStable}.  We omit the proofs.

Let $S$ be a slit corresponding to a component $\mathcal{R}$ (if $\mathcal{R}\neq-\overline{\mathcal{R}}$ or $\mathcal{R}\cup\mathcal{R}'$ if $\mathcal{R}=-\overline{\mathcal{R}}$, $\mathcal{R}=-\overline{\mathcal{R}'}$ and $\mathcal{R}\cap\mathcal{R'}\cap i\R$ has an even number of zeros.  A common point of $S$ and $\p\mathcal{R}$ will be called an \textit{origin} of $S$.  For a slit corresponding to a point $a\in\sigma_1$, we shall call the origin of $S$ simply the point $a$.  In the case when $a,a'\in\sigma_1\cap i\R$, the origin of the slit corresponding to $a$ will be the point $a$, while the origin of slit that joined the two circles $D_a$ and $D_{a'}$ together will be $a'$.

The following lemma is a straightforward application of Proposition \ref{CarlesonMeasures}.

\begin{lm}
\label{lm1}
Let $A$ denote the set of origins of all the slits constructed above.  Then the measure $\sum_{a\in A}\im a\, \delta_a$ is a Carleson measure with intensity at most $C(\delta,\epsilon)$.
\end{lm}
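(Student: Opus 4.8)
Proof plan for Lemma \ref{lm1}. The plan is to split the set of origins $A$ into two pieces according to the type of object each slit was attached to, and to bound each piece by one of the two parts of Proposition \ref{CarlesonMeasures}. The key structural observation is that every slit constructed above has each of its relevant endpoints either at a point of $\sigma_1$ or on the boundary $\p\mathfrak{U}$ (or, harmlessly, on a curve $\p\mathcal{R}_{\delta'}$ lying within hyperbolic distance $\delta'/100$ of $\p\mathfrak{U}$, which has comparable imaginary part). Indeed, the slits attached to an isolated zero $a\in\sigma_1$ — the vertical slit from $T_a$ down to $\R$, and, in the two--real--zero case $a,a'\in\sigma_1\cap i\R$, the slit $I_{a,a'}$ on $i\R$ joining $D_a$ and $D_{a'}$ whose origin we declared to be $a'$, as well as the slit $I$ joining $D_a$ to $\R$ in the $\mathcal{R},a$--pairing — all have origins in $\sigma_1$; the vertical slits from dyadic subintervals $J$, the $\Gamma$--slits, and the imaginary--axis slits introduced in the pairing constructions (joining $\mathcal{R}$ to $\mathcal{R}'$, or $\mathcal{R}_{\delta'}$ to $D_a$) all have an origin lying on some $\p\mathcal{R}\subset\p\mathfrak{U}$. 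Thus I would write $A=A_1\cup A_2$ with $A_1:=A\cap\sigma_1$ and $A_2:=A\setminus A_1$.

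For $A_1$ there is nothing to do: since $A_1\subseteq\sigma_1$,
$$
\sum_{a\in A_1}\im a\,\delta_a\leq\sum_{a\in\sigma_1}\im a\,\delta_a,
$$
and the right--hand side is a Carleson measure of intensity $\leq C(\delta)$ by Proposition \ref{CarlesonMeasures}(ii), hence so is the left--hand side. The content of the lemma is therefore the bound for $A_2$, and here the point is the geometric packing already built into Treil's slit construction. I would prove that to each origin $a\in A_2$ one can attach a subarc $\gamma_a\subseteq\p\mathfrak{U}$ with $l_{\p\mathfrak{U}}(\gamma_a)\geq c\,\im a$ for an absolute constant $c>0$, lying within distance $C\im a$ of $a$, such that the family $\{\gamma_a\}_{a\in A_2}$ has bounded overlap. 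Concretely: if $a=c_J+i\abs{J}$ is the origin of the vertical slit $[c_J,c_J+i\abs{J}]$ attached to a dyadic subinterval $J$ with center $c_J$, take $\gamma_a$ to be the top edge of $Q(J)$, a horizontal segment of $\p\mathfrak{U}$ of length exactly $\abs{J}=\im a$; distinct $J$'s (within a region and across generations) give disjoint such edges. If $a$ is the corner $w+id$ of a $\Gamma$--slit sitting on a maximal vertical subinterval of some $\p\mathcal{R}$, take $\gamma_a$ to be the piece of that vertical segment from $w+i\tfrac d2$ to $w+id$, of length $\tfrac12 d=\tfrac12\im a$ on $\p\mathcal{R}\subset\p\mathfrak{U}$; the geometric spacing of the $\Gamma$--slit altitudes makes these disjoint along a fixed maximal vertical subinterval, and they are disjoint from the horizontal edges used above. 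The remaining (pairing) slits each contribute at most one origin on some $\p\mathcal{R}$ and are handled the same way. That the neighbourhoods $S_{\delta'}$ of the slits for a component are pairwise disjoint (the disjointness proposition stated above Figure 3) forces origins at a common scale to be separated, which yields the bounded overlap. Granting this, for any interval $I$,
$$
\sum_{\substack{a\in A_2\\ a\in Q(I)}}\im a\ \leq\ \frac1c\sum_{\substack{a\in A_2\\ a\in Q(I)}}l_{\p\mathfrak{U}}(\gamma_a)\ \leq\ \frac{C}{c}\,l_{\p\mathfrak{U}}\bigl(Q(C'I)\bigr)\ \leq\ C(\delta)\,\abs{I},
$$
the middle step by bounded overlap and $\gamma_a\subseteq Q(C'I)$ whenever $a\in Q(I)$, the last step by Proposition \ref{CarlesonMeasures}(i) (a Carleson measure of intensity $K$ gives mass $\leq KC'\abs{I}$ to $Q(C'I)$). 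Hence $\sum_{a\in A_2}\im a\,\delta_a$ is Carleson with intensity $\leq C(\delta)$.

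Adding the two estimates, $\sum_{a\in A}\im a\,\delta_a$ is a Carleson measure of intensity at most $C(\delta,\epsilon)$, the dependence on $\epsilon$ entering only through the choice of $\delta'=\delta'(\delta,\epsilon)$ sitting inside the construction. The only real work is the packing claim for $A_2$; I expect that to be the main obstacle, though it is essentially bookkeeping — it repeats Treil's verification of the slit geometry, and the new imaginary--axis pairing slits introduced for the zeros on $i\R$ create no new kind of origin, so they require no additional argument beyond the remark that their endpoints lie either in $\sigma_1$ or on some $\p\mathcal{R}$.
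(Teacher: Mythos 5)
Your argument is correct and is precisely the fleshing-out of what the paper leaves as ``a straightforward application of Proposition \ref{CarlesonMeasures}'' (the paper omits the proof, referring to Treil). Splitting $A$ into origins lying in $\sigma_1$ (handled by Prop.\ \ref{CarlesonMeasures}(ii)) and origins lying on $\p\mathfrak{U}$ (handled by attaching comparable-length, boundedly-overlapping subarcs of $\p\mathfrak{U}$ and invoking Prop.\ \ref{CarlesonMeasures}(i)) is exactly the intended route, and your packing construction for the vertical and $\Gamma$-slit origins is the standard verification one would import from Treil's construction.
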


Let $S$ be a slit constructed above, and let $d=d(S)$ be its altitude.  An integer $k$ will be called the \textit{rank} of $S$ and denoted $\rk(S)$ if $2^k\leq d< 2^{k+1}$.  Of course the rank of a slit can be negative.

\begin{lm}
\label{lm2}
For a given $z\in\C_+$ and $k\in\Z$ the number of slits of rank $k$ for which $z\in S_{\delta'}$ is at most $C(\delta,\epsilon)$.
\end{lm}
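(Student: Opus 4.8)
The plan is to bound the number of \emph{origins} of rank-$k$ slits that are relevant to $z$, and then to observe that only boundedly many slits of a fixed rank can share an origin. Fixing the rank $k$ is essential here: it pins down the scale of the slit, and without it one could have arbitrarily many slits — one at every scale below $\im z$ — whose neighborhoods contain $z$. The starting point is the geometric observation that for each slit $S$ built in Section \ref{mainconstruction} its altitude $d(S)$ is comparable, up to absolute constants, to $\im a$, where $a=a(S)$ is the origin of $S$. For the vertical slits and the $\Gamma$-slits this is read off directly from the construction, since the origin lies on $\partial\mathcal R$ at a height comparable to the altitude; for a slit attached to a point $a\in\sigma_1\cap i\R$ or to a circle $T_a$, the altitude equals $(1-\delta')\im a$ or $\delta'\im a$, again comparable to $\im a$. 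Hence $\rk(S)=k$ forces $c\,2^{k}\le \im a(S)\le C\,2^{k}$ with constants depending only on $\delta'$, and therefore only on $\delta$ and $\epsilon$.

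Next I would localize. Suppose $z\in S_{\delta'}$ with $\rk(S)=k$, so $d(S)\in[2^k,2^{k+1})$. The slit $S$ has horizontal extent $\lesssim 2^k$ — the vertical slits have negligible width, and the horizontal part of a $\Gamma$-slit has length comparable to its altitude — while $S_{\delta'}$, being a $\tfrac{\delta'}{100}d(S)$-neighborhood of $S$ (or, for the hyperbolic-type neighborhoods $\Gamma^{\delta'}_{\mathcal R}$, a pseudohyperbolic $\tfrac{\delta'}{100}$-neighborhood), is contained in a Euclidean $C(\delta,\epsilon)\,2^k$-neighborhood of $S$. Consequently $\re z$ lies within distance $C(\delta,\epsilon)\,2^k$ of $\re a(S)$ and $\im z\le C(\delta,\epsilon)\,2^k$. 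Applying this to \emph{every} rank-$k$ slit $S'$ with $z\in S'_{\delta'}$, all the origins $a(S')$ land in a single Carleson box $Q(I)$ whose base $I$ is centered at $\re z$ with $|I|=C(\delta,\epsilon)\,2^k$, the absolute factor chosen large enough that also $\im a(S')\le|I|$.

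The count is then immediate from Lemma \ref{lm1}: since $\sum_{a\in A}\im a\,\delta_a$ is a Carleson measure of intensity $C(\delta,\epsilon)$,
$$
\sum_{a(S')\in Q(I)}\im a(S')\ \le\ \sum_{a\in A\cap Q(I)}\im a\ \le\ C(\delta,\epsilon)\,|I|\ \le\ C(\delta,\epsilon)\,2^k ,
$$
and since each summand $\im a(S')$ is at least $c\,2^k$, there are at most $C(\delta,\epsilon)$ such origins. Finally, distinct slits attached to one component $\mathcal R$ are disjoint (the disjointness proposition of Section \ref{mainconstruction}), hence have distinct origins, and likewise the $\tfrac{\delta'}{100}$-neighborhoods of the slits attached to points of $\sigma_1$ are disjoint; so at most an absolute number of slits of a given rank share one origin. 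Multiplying the two bounds yields the claim.

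I expect the only real work to be in the localization step: checking, uniformly across the five families of summands used to assemble $V$ (isolated zeros off $i\R$; paired regions $\mathcal R\ne-\overline{\mathcal R}$; pairs of zeros of $\sigma_1$ on $i\R$; paired regions $\mathcal R=-\overline{\mathcal R}$; and region-plus-zero pairs), that ``$z\in S_{\delta'}$ and $\rk(S)=k$'' genuinely confines the origin of $S$ to a box of size comparable to $2^k$ about $z$, and tracking the dependence of all the implied constants on $\delta'$, and thus on $\delta$ and $\epsilon$, throughout. Once that is in place, the remainder is just Lemma \ref{lm1} applied to one Carleson box.
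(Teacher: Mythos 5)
The paper itself omits the proof of Lemma \ref{lm2}, deferring to Treil \cite{TreilStable}, so there is no in-paper argument to compare against; but your reconstruction is correct and is exactly the Carleson-measure counting that Treil's proof rests on. The three pillars — (1) the altitude $d(S)$ of every slit is comparable, with constants depending only on $\delta'$ (hence on $\delta,\epsilon$), to $\im a(S)$ of its origin; (2) $z\in S_{\delta'}$ with $\rk S=k$ confines $a(S)$ to a Carleson box about $\re z$ of side $\lesssim 2^k$; (3) feed this into Lemma \ref{lm1} and use $\im a(S)\gtrsim 2^k$ to convert the mass bound into a cardinality bound — are each in order. The final ``multiply by slits per origin'' step is actually vacuous once you have used the disjointness proposition (pairwise disjoint slits cannot share the common point that would be their common origin), so the factor there is $1$; being conservative costs nothing. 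Two small caveats worth naming rather than leaving implicit: the paper's displayed definition of $S_{\delta'}$ drops the scale factor $d$ (it should read a $\tfrac{\delta'}{100}d$-neighborhood, as in Treil), and your localization step tacitly uses this corrected reading; and the comparison $d(S)\asymp\im a(S)$ has to be checked for the new slit types introduced in the pairing constructions (the connector $I_{\mathcal R,\mathcal R'}$, $I_{\mathcal R,a}$, $I_{a,a'}$), where the paper's choice of declaring the \emph{upper} endpoint to be the origin is precisely what makes the comparability hold — you flag this checking, and it does go through.
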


\begin{lm}
\label{lm3}
For a given $z\in\C_+$, there exists at most $C(\delta,\epsilon)$ components $\mathcal{R}\in\mathfrak{R}$ such that the hyperbolic $\f{\delta'}{100}$--neighborhood $\Gamma^{\delta'}_\mathcal{R}$ of $\p\mathcal{R}_{\delta'}$ contains the point $z$.
\end{lm}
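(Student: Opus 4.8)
The plan is to deduce Lemma~\ref{lm3} from the Carleson measure bound of Proposition~\ref{CarlesonMeasures}(i), using the principle that the boundary of a component of $\mathfrak{U}$, once it comes hyperbolically close to a point $z$, must carry a definite amount of arclength at the scale $\im z$. Write $\beta(\cdot,\cdot)$ for the hyperbolic distance in $\C_+$. First I would convert the hypothesis $z\in\Gamma^{\delta'}_{\mathcal{R}}$ into the statement that $z$ lies within \emph{bounded} hyperbolic distance of $\partial\mathcal{R}$: by definition $z$ is at pseudohyperbolic distance $<\delta'/100$ from $\partial\mathcal{R}_{\delta'}$ (or from $\partial(-\overline{\mathcal{R}})_{\delta'}$, in which case replace $\mathcal{R}$ by $-\overline{\mathcal{R}}$, which costs only a harmless factor $2$ since the components occur in symmetric pairs); and since $\mathcal{R}$ is compact, a nearest point of $\mathcal{R}$ to a point of $\partial\mathcal{R}_{\delta'}$ must lie on $\partial\mathcal{R}$, so $\partial\mathcal{R}_{\delta'}$ is within pseudohyperbolic distance $\delta'/100$ of $\partial\mathcal{R}$. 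Hence there is a point $\zeta=\zeta(\mathcal{R})\in\partial\mathcal{R}$ with $\beta(z,\zeta)\leq\eta=\eta(\delta,\epsilon)$, and in particular $\im\zeta$ is comparable to $\im z$ with constants depending only on $\delta,\epsilon$.

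The heart of the argument is the estimate that there exist an absolute constant $r_0$ and a constant $c_0>0$ such that, for every component $\mathcal{R}$ of $\mathfrak{U}$ and every $\zeta\in\partial\mathcal{R}$ with $\im\zeta>0$,
$$
l_{\partial\mathfrak{U}}\bigl(\partial\mathcal{R}\cap\{w\in\C_+:\beta(w,\zeta)<r_0\}\bigr)\ \geq\ c_0\,\im\zeta .
$$
To establish this I would use that each component $\mathcal{R}$ is simply connected and that $\partial\mathfrak{U}$ is a union of horizontal and vertical segments, so that the component of $\partial\mathcal{R}$ through $\zeta$ is a rectilinear Jordan curve $\Lambda$, and then distinguish two cases according to the hyperbolic diameter of $\mathcal{R}$. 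If that diameter is $\leq r_0/2$, then $\Lambda$ lies entirely in $\{w:\beta(w,\zeta)<r_0\}$, all heights in $\mathcal{R}$ are comparable to $\im\zeta$, and since $\mathcal{R}$ contains some nonempty region $\mathcal{U}(I)$ — which itself contains the rectangle $I\times(\tfrac{1}{2}|I|,|I|]$, the removed squares over $I$ having side $\leq|I|/2$ — the set $\mathcal{R}$ contains a rectangle of Euclidean size comparable to $\im\zeta$, forcing $l(\Lambda)\gtrsim\im\zeta$. If the hyperbolic diameter of $\mathcal{R}$ exceeds $r_0/2$, then, a Jordan curve having length at least twice its diameter, the hyperbolic length of $\Lambda$ exceeds $r_0$; running along $\Lambda$ from $\zeta$ a hyperbolic distance $r_0/2$ yields a subarc contained in $\{w:\beta(w,\zeta)<r_0\}$ along which the height stays within a factor $e^{r_0/2}$ of $\im\zeta$, so this subarc alone has Euclidean length $\gtrsim\im\zeta$. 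Either way the displayed bound follows.

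Putting these together, suppose $z\in\Gamma^{\delta'}_{\mathcal{R}_1},\dots,\Gamma^{\delta'}_{\mathcal{R}_m}$ for distinct components $\mathcal{R}_i$. Choosing $\zeta_i\in\partial\mathcal{R}_i$ as in the first paragraph and setting $E_i:=\partial\mathcal{R}_i\cap\{w:\beta(w,\zeta_i)<r_0\}$, the $E_i$ are pairwise disjoint (they are subsets of the disjoint sets $\mathcal{R}_i$), each satisfies $l_{\partial\mathfrak{U}}(E_i)\geq c_0\,\im\zeta_i\geq c(\delta,\epsilon)\,\im z$, and each lies in the fixed hyperbolic ball $\{w:\beta(w,z)<\eta+r_0\}$, hence in a Carleson square $Q(I)$ with $|I|\leq C(\delta,\epsilon)\,\im z$. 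Proposition~\ref{CarlesonMeasures}(i) then gives
$$
m\cdot c(\delta,\epsilon)\,\im z\ \leq\ \sum_{i=1}^{m} l_{\partial\mathfrak{U}}(E_i)\ =\ l_{\partial\mathfrak{U}}\Bigl(\bigcup_{i=1}^{m}E_i\Bigr)\ \leq\ l_{\partial\mathfrak{U}}\bigl(Q(I)\bigr)\ \leq\ C(\delta)\,|I|\ \leq\ C(\delta,\epsilon)\,\im z ,
$$
so that $m\leq C(\delta,\epsilon)$, which is the assertion.

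The step I expect to be the main obstacle is the geometric estimate of the second paragraph: one must verify the uniform lower bound on boundary arclength for all positions of $\zeta$ on the rectilinear boundary (tops and sides of the squares $Q(J)$ and of the removed squares $Q(J')$, corners, and points near $\R$), and one must confirm that the components produced by the Bourgain--Treil construction really do have rectilinear Jordan-curve boundaries, so that the ``length at least twice diameter'' inequality is legitimately available; once these geometric facts are in hand, the reduction in the first paragraph and the Carleson-measure packing argument in the third are routine.
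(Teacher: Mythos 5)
Your proposal is correct, and it is the standard packing argument for this type of finite-overlap statement: (1) close to $\Gamma^{\delta'}_{\mathcal{R}}$ means within a fixed hyperbolic distance of $\partial\mathcal{R}$; (2) a uniform lower bound on the arclength that $\partial\mathcal{R}$ must deposit in a fixed hyperbolic ball around any of its own points; (3) disjointness of the $\partial\mathcal{R}_i$ plus Proposition~\ref{CarlesonMeasures}(i) gives the bound. The paper itself omits the proof and refers to Treil's original construction, so there is no internal proof to compare against, but the route you take is the one such lemmas are normally proved by and it is sound.

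Two small remarks on the details you flagged as the potential obstacle. For step (2) you don't actually need the full strength of ``$\partial\mathcal{R}$ is a rectilinear Jordan curve''; what the argument uses is only that (a) $\partial\mathcal{R}$ is a rectifiable set (true, it is a finite union of segments), (b) the hyperbolic diameter of $\mathcal{R}$ is attained at two boundary points and each of the two boundary arcs joining them has hyperbolic length at least that diameter (this is where Jordan-curve structure, or at least path-connectedness of the boundary between the two extremal points inside the ball, is used), and (c) in the short-diameter case, $\mathcal{R}$ contains the top half of some $Q(J)$, which gives the Euclidean lower bound. Points (a) and (c) are immediate from the stopping-time construction; (b) holds because each component $\mathcal{R}$ is a simply connected closure of a finite union of rectilinear regions $\mathcal{U}(J)$ glued along boundary segments, so $\partial\mathcal{R}$ is indeed a rectilinear Jordan curve. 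For step (1), it's worth noting that because $\delta'=\delta'(\delta,\epsilon)$ is small, the hyperbolic radius $\eta$ you obtain is in fact small as well, so the comparability constants $e^{\pm\eta}$ are close to $1$; your wording (constants depending on $\delta,\epsilon$) is of course still correct. Everything else — the reduction to a symmetric representative, the choice of a fixed Carleson square containing the hyperbolic ball $\{\beta(\cdot,z)<\eta+r_0\}$ of side $\lesssim\im z$, and the final division — is routine and correct.
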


For $a\in\sigma_1$, we constructed circles $T_a$, or $T_{a,-\overline{a}}$ about the point $a,-\overline{a}$ with $T_a$ having a radius $\delta'\im a$.  We let $T_a^{\delta'}$ and  $T_{a,-\overline{a}}^{\delta'}$ be the $\delta'\im a$ neighborhood of the appropriate region.  We have the following proposition.

\begin{lm}
\label{lm4}
For a given $z\in\C_+$ there exist at most $C(\delta,\epsilon)$ points $a\in\sigma_1$ such that $z\in T_a^{\delta'}$, $T_{a,-\overline{a}}^{\delta'}$.
\end{lm}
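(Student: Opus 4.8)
The plan is to deduce this from the Carleson measure estimate Proposition \ref{CarlesonMeasures}(ii) for $\sum_{a\in\sigma_1}\im a\,\delta_a$, exactly as Lemmas \ref{lm2} and \ref{lm3} are bounded--overlap consequences of their corresponding Carleson estimates. The one feature to keep track of is that the radius of $T_a$ (and of $T_{a,-\overline a}$) scales with $\im a$ rather than with $\im z$, so one must first extract the comparability $\im a\asymp\im z$ before invoking the measure bound.

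First I would record an elementary geometric observation. Since $T_a$ is the circle of radius $\delta'\im a$ centered at $a$ and $T_a^{\delta'}$ is its $\delta'\im a$--neighborhood, we have
$$
T_a^{\delta'}\subset\{w\in\C_+:\abs{w-a}<2\delta'\im a\}.
$$
Hence $z\in T_a^{\delta'}$ forces $\abs{z-a}<2\delta'\im a$, and in particular $\abs{\im z-\im a}<2\delta'\im a$. Since $\delta'$ is chosen small with respect to $\delta$ and $\epsilon$, we may assume $\delta'<\tfrac14$, which gives $\tfrac12\im z<\im a<2\im z$ and $\abs{\re z-\re a}<2\delta'\im a<4\delta'\im z<\im z$. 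Consequently every $a\in\sigma_1$ with $z\in T_a^{\delta'}$ lies in the Carleson square $Q(I)$ whose base $I$ is the interval of length $\abs{I}=4\im z$ centered at $\re z$ (indeed $\re a\in I$ and $\im a<2\im z<\abs{I}$, so $a\in Q(I)$).

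Next I would invoke Proposition \ref{CarlesonMeasures}(ii): the measure $\sum_{a\in\sigma_1}\im a\,\delta_a$ is Carleson with intensity at most $C(\delta)$, so
$$
\sum_{a\in\sigma_1\cap Q(I)}\im a\ \le\ C(\delta)\,\abs{I}.
$$
Each $a$ counted above satisfies $\im a>\tfrac12\im z=\tfrac18\abs{I}$, so if $N$ is the number of such points then $\tfrac18\abs{I}\,N\le C(\delta)\abs{I}$, i.e. $N\le 8C(\delta)$. Since $\delta'=\delta'(\delta,\epsilon)$, this is a bound of the form $C(\delta,\epsilon)$, as required.

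Finally, the case $z\in T_{a,-\overline a}^{\delta'}$ is handled in the same way once one notes that $T_{a,-\overline a}$ has center $i\,\im a$ on the imaginary axis and radius $\delta'\im a$, and that the underlying zero $a$ (being contained, together with $-\overline a$, in the disc $D_{a,-\overline a}$ of that radius about $i\,\im a$) satisfies $\abs{\re a}\le\delta'\im a$. Then $z\in T_{a,-\overline a}^{\delta'}$ again yields $\im a\asymp\im z$ and $\abs{\re a}\lesssim\im z$, placing $a$ in a Carleson square of size $\asymp\im z$ about the point $i\,\im z$, and Proposition \ref{CarlesonMeasures}(ii) closes the count exactly as before; a harmless factor of $2$ from the symmetric labelling $T_{a,-\overline a}=T_{-\overline a,a}$ is absorbed into the constant. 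There is no genuine obstacle here: the only care needed is the order of operations --- comparability of imaginary parts first, Carleson bound second --- and the mild normalization $\delta'<\tfrac14$.
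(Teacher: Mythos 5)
Your proof is correct. Note that the paper does not actually supply a proof of Lemma \ref{lm4}; together with Lemmas \ref{lm1}--\ref{lm3} it is stated with the remark that the proofs are omitted and follow from Treil's original construction in \cite{TreilStable}. Your argument is precisely the standard bounded--overlap deduction from the Carleson condition that one would expect there, and the key preliminary step you isolate --- that $z\in T_a^{\delta'}$ forces $\im a\asymp\im z$ and $\abs{\re z-\re a}\lesssim\im z$ once $\delta'<\tfrac14$, so that all the relevant $a$ live in a single Carleson square $Q(I)$ of side $\asymp\im z$ and each contributes mass $\gtrsim\abs{I}$ to the Carleson measure $\sum_{a\in\sigma_1}\im a\,\delta_a$ of Proposition \ref{CarlesonMeasures}(ii) --- is exactly the right mechanism. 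Your treatment of the merged circle $T_{a,-\overline a}^{\delta'}$ is also sound: the construction only uses $D_{a,-\overline a}$ when $a$ and $-\overline a$ already lie inside that disc, which gives $\abs{\re a}\le\delta'\im a$, and then the same comparability and the same Carleson square argument apply. The only cosmetic observation is that the bound you obtain depends on $\delta$ alone through the Carleson intensity (the $\epsilon$ dependence enters only through the choice of $\delta'$), which is consistent with --- and in fact sharper than --- the $C(\delta,\epsilon)$ stated in the lemma.
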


We remark that these lemmas follow from Treil's original proof, and then the observation that we have joined certain regions and so might be able to do slightly better than the resulting constant $C(\delta,\epsilon)$.

To show that the function $V$ satisfies the inequality (i) of Proposition \ref{CorrectingV_new}, we will use a result due to Treil, \cite{TreilHankel}.  

\begin{lm}
\label{TreilLemma}
Let $0<\epsilon<1$.  Let $\Theta_n$ $(n\in\N)$ be inner functions and suppose $\gamma_n$ is a ``semi-Carleson contour'' for $\Theta_n$, i.e., $\gamma_n=\p\mathcal{V}_n$, where $\mathcal{V}_n$ is an open set $\mathcal{V}_n\supset\{z:\abs{\Theta_n(z)}<\epsilon\}$.  Moreover, the measure $l_{\gamma_n}$ (arclength on $\gamma_n$) is a Carleson measure with intensity at most $C$, with $C$ independent of $n$.  Suppose that the measure $\sum_nl\gamma_n$ is a Carleson measure with intensity at most $C_1$.  Then
$$
\sum_{n\in\N}\left(1-\abs{\Theta_n(z)}^2\right)\leq KC_1,\ \forall z\in \C_+,\ \textnormal{where } K=K(\epsilon,C).
$$
\end{lm}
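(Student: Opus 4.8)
The plan is to prove the lemma in two stages: first a per-function estimate that dominates each $1-|\Theta_n(z)|^2$ by a multiple of the harmonic measure of the contour $\gamma_n$, and then a summation estimate that controls the total harmonic measure using only that $\sum_n l_{\gamma_n}$ is a Carleson measure.

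\textbf{Stage 1: per-function domination.} Fix $n$ and put $\Omega_n:=\C_+\setminus\overline{\mathcal V_n}$. Since every zero of $\Theta_n$ lies in $\{z:|\Theta_n(z)|<\epsilon\}\subset\mathcal V_n$, the function $\log(1/|\Theta_n|)$ is nonnegative and harmonic on $\Omega_n$; it has boundary value $0$ on $\R\cap\partial\Omega_n$ (as $\Theta_n$ is inner), while on $\gamma_n=\partial\mathcal V_n$ one has $|\Theta_n|\ge\epsilon$ — a boundary point of $\mathcal V_n$ is a limit of points off $\mathcal V_n$, hence off $\{|\Theta_n|<\epsilon\}$ — so $\log(1/|\Theta_n|)\le\log(1/\epsilon)$ there. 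The maximum principle (with the usual care at $\infty$, where $\log(1/|\Theta_n|)\to0$) then gives
$$\log\frac{1}{|\Theta_n(z)|}\ \le\ \log\frac1\epsilon\cdot\omega_n(z),\qquad\omega_n:=\omega(\cdot,\gamma_n,\Omega_n),\qquad z\in\Omega_n.$$
Define $u_n$ to equal $\omega_n$ on $\Omega_n$ and $1$ on $\overline{\mathcal V_n}$; this is a nonnegative superharmonic function on $\C_+$ with boundary value $0$ on $\R$, and being bounded it has greatest harmonic minorant $0$, so it is a Green potential $u_n(z)=\int_{\C_+}G(z,w)\,d\mu_n(w)$ with $G$ the Green function of $\C_+$ and $\mu_n=-\tfrac{1}{2\pi}\Delta u_n\ge0$ supported where $u_n$ fails to be harmonic, i.e. on $\gamma_n$; in particular $\int G(z,w)\,d\mu_n(w)=u_n(z)\le1$ for all $z$. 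Using the elementary bound $1-t^2\le2\log(1/t)$ for $0<t\le1$ (applied with $t=|\Theta_n(z)|$ on $\Omega_n$, where $|\Theta_n|\ge\epsilon$) together with the trivial bound $1-|\Theta_n(z)|^2\le1=u_n(z)$ on $\overline{\mathcal V_n}$, I would obtain the uniform estimate $1-|\Theta_n(z)|^2\le C(\epsilon)\,u_n(z)$ with $C(\epsilon)=\max(1,\,2\log(1/\epsilon))$, and hence, summing,
$$\sum_n\bigl(1-|\Theta_n(z)|^2\bigr)\ \le\ C(\epsilon)\sum_n u_n(z)\ =\ C(\epsilon)\int_{\C_+}G(z,w)\,d\mu(w),\qquad\mu:=\sum_n\mu_n.$$

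\textbf{Stage 2: the summation, which I expect to be the main obstacle.} What remains is to show $\sum_n u_n(z)\le K(\epsilon,C)\,C_1$, and this is where both Carleson hypotheses are used. The uniform bound $l_{\gamma_n}(Q(I))\le C|I|$ should let one compare the harmonic-flux measure $\mu_n$ to arclength $l_{\gamma_n}$ at the natural Whitney scale, so that $\mu$ is controlled by a Carleson measure built from $\sum_n l_{\gamma_n}$, of intensity $\lesssim_{\epsilon,C}C_1$. The subtlety — and the real difficulty — is that the Green potential of a Carleson measure need not be bounded, so one cannot simply invoke a Carleson-embedding inequality; the extra ingredient is the per-function normalization $\int G(z,w)\,d\mu_n(w)=u_n(z)\le1$, which says each $\mu_n$ is ``spread out'' rather than concentrated. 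The way I would organize it is probabilistically: $\sum_n u_n(z)$ is the expected number of distinct contours $\gamma_n$ met by Brownian motion from $z$ before it reaches $\R$, the first meeting with $\gamma_n$ being a crossing of $\gamma_n$; decomposing $\C_+$ into the dyadic Carleson boxes $Q(2^kI_z)$ about $z$ (with $|I_z|\sim\im z$) and summing the geometrically decaying contributions, one uses $(\sum_n l_{\gamma_n})(Q(2^kI_z))\le C_1\,2^k|I_z|$ at each scale together with $u_n\le1$ to conclude that only $\lesssim_{\epsilon,C}C_1$ contours can be met in expectation. Making this decomposition precise — reconciling the per-function normalization with the Carleson control of $\sum_n l_{\gamma_n}$ — is the technical heart of the argument; this is exactly Treil's lemma from \cite{TreilHankel}, to which I would refer for the remaining details.
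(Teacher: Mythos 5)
The paper does not actually prove this lemma: it is quoted as a result of Treil and cited from \cite{TreilHankel} with no argument supplied. So there is no in-paper proof for you to match or diverge from, and the right question is whether your sketch would stand on its own.

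Your Stage~1 is the right reduction in spirit: compare $\log(1/|\Theta_n|)$ to the harmonic measure of $\gamma_n$ on $\Omega_n$, pass to the superharmonic extension $u_n$, and use the elementary bound $1-t^2\le 2\log(1/t)$ together with the trivial bound on $\overline{\mathcal V_n}$ to get $1-|\Theta_n|^2\le C(\epsilon)\,u_n$. One technical snag: your claim that $u_n$ has greatest harmonic minorant $0$, hence is a pure Green potential, is not automatic. If $\mathcal V_n\cap\R$ has positive length then $u_n\equiv1$ there, and the Riesz decomposition of $u_n$ acquires a nonzero harmonic part, namely the Poisson integral of $\mathbf 1_{\mathcal V_n\cap\R}$. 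This is fixable (the Carleson hypothesis on $\sum_n l_{\gamma_n}$ forces a Carleson-type control on $\sum_n|\mathcal V_n\cap\R|$, since each $\gamma_n$ must close off $\mathcal V_n$ over the base interval), but as written the step overreaches.

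The more serious issue is that Stage~2 --- passing from the per-function normalization $u_n\le1$ plus the Carleson bound on $\sum_n l_{\gamma_n}$ to the uniform bound $\sum_n u_n(z)\le K(\epsilon,C)C_1$ --- is exactly the content of the lemma, and you explicitly do not prove it; you sketch the Brownian-crossing heuristic and then defer to \cite{TreilHankel}. The heuristic is the right picture (each contour can be counted at most boundedly often, and the Carleson condition controls how many contours live at each Whitney scale around $z$), but the dyadic-annuli bookkeeping that reconciles the two hypotheses is precisely what needs to be written down. As it stands your proposal is a correct reduction plus a deferral, which is no more and no less than what the paper does; it is honest, but it is not a self-contained proof.
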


This Lemma is applied to the family of functions made from the union of the following two families.  The first family corresponds to those having no zeros on the imaginary axis.
$$
\left\{B_\mathcal{R}B_{-\overline{\mathcal{R}}}:\mathcal{R}\in\mathfrak{R}, \ \mathcal{R}\neq-\overline{\mathcal{R}}\right\}\cup \left\{b_ab_{-\overline{a}}:a\in\sigma_1\cap\left(\C_+\setminus i\R\right)\right\}
$$
While the second family corresponds to the case of zeros on the imaginary axis.  There are four sub-families in this case:
$$
\left\{B_\mathcal{R}:\mathcal{R}\in\mathfrak{R}, \mathcal{R}=-\overline{\mathcal{R}}, \textnormal{ even number of zeros in } \mathcal{R}\cap i\R\right\},
$$
$$
\left\{B_\mathcal{R}B_{\mathcal{R}'}:\mathcal{R},\mathcal{R}'\in\mathfrak{R}, \mathcal{R}=-\overline{\mathcal{R}},\mathcal{R}'=-\overline{\mathcal{R}'}, \textnormal{ odd number of zeros in } \mathcal{R}\cap i\R \textnormal{ and } \mathcal{R}'\cap i\R\right\},
$$
$$
\left\{b_a b_{a'}:a,a'\in\sigma_1\cap i\R\right\},
$$ 
and
$$
\left\{ B_{\mathcal{R}}b_a: a\in\sigma_1\cap i\R, \mathcal{R}\in\mathfrak{R}, \mathcal{R}=-\overline{\mathcal{R}}, \textnormal{ odd number of zeros in } \mathcal{R}\cap i\R\right\}.
$$
For the case of the $B_\mathcal{R}B_{-\overline{\mathcal{R}}}$, we take for the semi-Carleson contour the boundary $\p\mathcal{R}_{\delta'}\cup\p\left(-\overline{\mathcal{R}}_{\delta'}\right)$.  In the case $B_\mathcal{R}$ with $\mathcal{R}$ symmetric and having an even number of zeros in $\mathcal{R}\cap i\R$ we simply take $\p\mathcal{R}_{\delta'}$.  In the case of $B_{\mathcal{R}}B_{\mathcal{R}'}$ with $\mathcal{R}$ and $\mathcal{R}'$ symmetric and having an odd number of zeros in $\mathcal{R}\cap i\R$ and $\mathcal{R}\cap i\R$ we take $\p\mathcal{R}_{\delta'}\cup \p\mathcal{R}'_{\delta'}$.  If we have $\mathcal{R}$ symmetric with an odd number of zeros in $\mathcal{R}\cap i\R$ and $a\in\sigma_1\cap i\R$ we take $\partial\mathcal{R}_{\delta'}\cup T_{a}$.  For the other cases, we take either $T_a\cup T_{-\overline{a}}$, $T_{a,-\overline{a}}$ or $T_{a}\cup T_{a'}$.  The assumption of Lemma \ref{TreilLemma} follows from Proposition \ref{CarlesonMeasures}.  Therefore, we have
\begin{eqnarray*}
\sum_{\substack{\mathcal{R}\in\mathfrak{R}\\ \mathcal{R}\neq-\overline{\mathcal{R}}}}(1-\abs{B_\mathcal{R}B_{-\overline{\mathcal{R}}}}^2)\ +\ \sum_{\substack{\mathcal{R}\in\mathfrak{R}\\ \mathcal{R}=-\overline{\mathcal{R}}}}(1-\abs{B_\mathcal{R}}^2) \ + \sum_{\substack{\mathcal{R},\mathcal{R}'\in\mathfrak{R}\\ \mathcal{R}=-\overline{\mathcal{R}}, \mathcal{R}'=-\overline{\mathcal{R}'}}}(1-\abs{B_\mathcal{R}B_{\mathcal{R}'}}^2) &  & \\
\sum_{a\in\sigma_1\cap\left(\C_+\setminus i\R\right)}(1-\abs{b_a(z)b_{-\overline{a}}(z)}^2)\ + \ \sum_{a,a'\in\sigma_1\cap i\R}(1-\abs{b_a(z)b_{a'}(z)}^2) +\sum_{\stackrel{\mathcal{R}\in\mathfrak{R},\mathcal{R}=-\overline{\mathcal{R}}}{a\in\sigma_1\cap i\R}} (1-|B_{\mathcal{R}}b_a|^2)& \leq &  C(\delta,\epsilon).
\end{eqnarray*}

But, by the construction of $V$ in Section \ref{mainconstruction}, we have that (with the last three being estimates from the case of zeros on $i\R$)
$$
\begin{array}{clll}
\abs{\re V_\mathcal{R}(z)} & \leq & \min\left\{\log\f{1}{\delta'},\log\abs{B_{\mathcal{R}}B_{-\overline{\mathcal{R}}}}^{-1}\right\}, & \mathcal{R}\neq -\overline{\mathcal{R}}\\
\abs{\re V_a(z)} & \leq & \min\left\{\log\f{1}{\delta'},\log\abs{b_a(z)b_{-\overline{a}}(z)}^{-1}\right\}, & a\in\sigma_1\cap(\C_+\setminus i\R)\\
\abs{\re V_\mathcal{R}(z)} & \leq & \min\left\{\log\f{1}{\delta'},\log\abs{B_{\mathcal{R}}}^{-1}\right\}, & \textnormal{even number of zeros in } i\R\cap\mathcal{R}\\
\abs{\re V_{\mathcal{R},\mathcal{R}'}(z)} & \leq & \min\left\{\log\f{1}{\delta'},\log\abs{B_{\mathcal{R}}B_{\mathcal{R}'}}^{-1}\right\}, & \textnormal{even number of zeros in } i\R\cap\mathcal{R}\cap\mathcal{R}'\\
\abs{\re V_{a,a'}(z)} & \leq & \min\left\{\log\f{1}{\delta'},\log\abs{b_a(z)b_{a'}(z)}^{-1}\right\}, & a,a'\in \sigma_1\cap i\R\\
\abs{\re V_{\mathcal{R},a}(z)} & \leq & \min\left\{\log\f{1}{\delta'},\log\abs{B_{\mathcal{R}}(z)b_{a}(z)}^{-1}\right\}, & \mathcal{R}=-\overline{\mathcal{R}}, a\in\sigma_1\cap i\R
\end{array}
$$
for some constant $0<\delta'$.  Hence we have,
$$
\abs{\re V(z)}\leq C(\delta,\epsilon).
$$
We now prove that $V$ satisfies the conditions necessary to guarantee the existence of solutions to the $\db$-equation.  Namely, we prove that the Laplacian and the derivative of $V$ gives rise to a Carleson measure.

The proof again is basically a repeat of what appears in \cite{TreilStable}.  We include it for the ease of the reader.  Note that 
$$
\Delta V=\sum_{a\in\sigma_1\cap\left(\C_+\setminus i\R\right)} \Delta V_a+\sum_{\stackrel{\mathcal{R}\in\mathfrak{R}}{\mathcal{R}\neq -\overline{\mathcal{R}}}} \Delta V_\mathcal{R}+\sum_{a,a'\in\sigma_1\cap i\R} \Delta V_{a,a'}+\sum_{\stackrel{\mathcal{R}\in\mathfrak{R}}{\mathcal{R}=-\overline{\mathcal{R}}}} \Delta V_{\mathcal{R},\mathcal{R}'}+\sum_{\stackrel{\mathcal{R}\in\mathfrak{R},\mathcal{R}=-\overline{\mathcal{R}}}{a\in\sigma_1\cap i\R}} \Delta V_{\mathcal{R},a}.
$$
We have that the summand $\Delta V_\mathcal{R}(z)$ is not equal to zero (respectively, $\Delta V_a(z)\neq 0$), only if either
\begin{itemize}
\item[(a)] $z\in\Gamma^{\delta'}_\mathcal{R}$ (respectively, $z\in T_a^{\delta'}$ or $z\in T_{a, -\overline{a}}^{\delta'}$), or\\
\item[(b)] $z\in S_{\delta'}$ where $S$ is a slit corresponding to the component $\mathcal{R}$ (respectively, to the point $a$).
\end{itemize}
Similarly, the summand $\Delta V_{\mathcal{R},\mathcal{R}'}(z)$ is not equal to zero (respectively, $\Delta V_{a,a'}(z)\neq 0$), only if either
\begin{itemize}
\item[(c)] $z\in\Gamma^{\delta'}_{\mathcal{R},\mathcal{R}'}$ (respectively, $z\in T_{a,a'}^{\delta'}$), or\\
\item[(d)] $z\in S_{\delta'}$ where $S$ is a slit corresponding to the component $\mathcal{R}\cup\mathcal{R}'$ (respectively, to the points $a$ and $a'$).
\end{itemize}
Finally, the summand $\Delta V_{\mathcal{R},a}(z)$ is not equal to zero, only if
\begin{itemize}
\item[(e)] $z\in\Gamma^{\delta'}_{\mathcal{R}}\cup T_{a}^{\delta'}$, or\\
\item[(f)] $z\in S_{\delta'}$ where $S$ is a slit corresponding to the component $\mathcal{R}\cup D_a$.
\end{itemize}

By Lemmas \ref{lm3} and \ref{lm4}, for each $z\in\C_+$, at most $C(\delta,\epsilon)$ summands satisfy condition (a), (c) and (e).  Therefore, given the estimates obtained in the construction of $V_\mathcal{R}$, $V_a$, $V_{\mathcal{R},\mathcal{R}'}$, $V_{a,a'}$ and $V_{\mathcal{R},a}$ we have
\begin{eqnarray*}
\sum_{\mathcal{R}\in\mathfrak{R}, a\in\sigma_1: z\in \Gamma^{\delta'}_{\mathcal{R}}\cup T_{a}^{\delta'}} \abs{\Delta V_{\mathcal{R},a}(z)}+\sum_{\mathcal{R},\mathcal{R'}: z\in\Gamma^{\delta'}_{\mathcal{R},\mathcal{R}'}}\abs{\Delta V_{\mathcal{R},\mathcal{R}'}(z)}+\sum_{a,a':z\in T_{a,a'}^{\delta'}}\abs{\Delta V_{a,a'}(z)}\\
 +\sum_{\mathcal{R}: z\in\Gamma^{\delta'}_\mathcal{R}}\abs{\Delta V_\mathcal{R}(z)}+\sum_{a:z\in T_a^{\delta'}}\abs{\Delta V_a(z)}\leq\f{C(\delta,\epsilon)}{(\im z)^2}.
\end{eqnarray*}
It remains to deal with those points that contribute arising from condition (b), (d) and (f).  

Let $\mathrm{R}_k(z)$ be the collection of all components $\mathcal{R}$ such that there exists a slit $S$ of rank $k$ corresponding to the component for which $z\in S_{\delta'}$.  Similarly, define $\mathrm{A}_k(z)$ to be the set of all zeros $a\in\sigma_1$ for which $z\in S_{\delta'}$, $\rk S=k$, and $S$ is the slit corresponding to the point $a$.  Lemma \ref{lm2} implies that $\card\mathrm{R}_k(z)+\card\mathrm{A}_k(z)\leq C(\delta,\epsilon)$ for all $z\in\C_+$.  Note that the sets $\mathrm{R}_k(z)$ and $\mathrm{A}_k(z)$ are non-empty when $2^k>\f{1}{2}\im z$.



Now the estimates from the construction $V_\mathcal{R}$, $V_a$, $V_{\mathcal{R}, \mathcal{R}'}$, $V_{a,a'}$ and $V_{\mathcal{R},a}$ imply
\begin{eqnarray*}
\sum_{\stackrel{\mathcal{R}\in\mathrm{R}_k(z)}{a\in\mathrm{A}_k(z)}}\abs{\Delta V_{\mathcal{R},a}(z)}+\sum_{\mathcal{R},\mathcal{R'}\in\mathrm{R}_{k}(z)}\abs{\Delta V_{\mathcal{R},\mathcal{R}'}(z)}+\sum_{a,a'\in\mathrm{A}_k(z)}\abs{\Delta V_{a,a'}(z)}\\
+\sum_{\mathcal{R}\in\mathrm{R}_k(z)}\abs{\Delta V_\mathcal{R}(z)}+\sum_{a\in\mathrm{A}_k(z)}\abs{\Delta V_a(z)}\leq\f{C(\delta,\epsilon)}{(2^k)^2}.
\end{eqnarray*}
Hence, 
\begin{eqnarray*}
\sum_{\textnormal{condition (f)}}\abs{\Delta V_{\mathcal{R},a}(z)}+
\sum_{\textnormal{condition (d)}}\abs{\Delta V_{\mathcal{R},\mathcal{R}'}(z)}+\sum_{\textnormal{condition (d)}}\abs{\Delta V_{a,a'}(z)}\\ 
+\sum_{\textnormal{condition (b)}}\abs{\Delta V_\mathcal{R}(z)}+\sum_{\textnormal{condition (b)}}\abs{\Delta V_a(z)} \\
=  \sum_{\substack{k\\2^{k+1}>\im z}}\left(\sum_{\stackrel{\mathcal{R}\in\mathrm{R}_k(z)}{a\in\mathrm{A}_k(z)}} \abs{\Delta V_{\mathcal{R},a}(z)}+\sum_{\mathcal{R},\mathcal{R'}\in\mathrm{R}_{k}(z)}\abs{\Delta V_{\mathcal{R},\mathcal{R}'}(z)}+\sum_{a,a'\in\mathrm{A}_k(z)}\abs{\Delta V_{a,a'}(z)}\right)\\
+\sum_{\substack{k\\2^{k+1}>\im z}}\left(\sum_{\mathcal{R}\in\mathrm{R}_k(z)}\abs{\Delta V_\mathcal{R}(z)}+\sum_{a\in\mathrm{A}_k(z)}\abs{\Delta V_a(z)}\right)\\
 \leq \sum_{\substack{k\\2^{k+1}>\im z}}\f{C(\delta,\epsilon)}{(2^k)^2}\leq\f{C(\delta,\epsilon)}{(\im z)^2}.\\
\end{eqnarray*}
Thus, $\abs{\Delta V(z)}\leq\f{C(\delta,\epsilon)}{(\im z)^2}$.  The same logic and method of proof shows that $\abs{\partial V}\leq\f{C(\delta,\epsilon)}{\im z}$.

We now turn to demonstrating that $V$ gives rise to Carleson measures.  Fix a square $Q=Q(I)$ with $\abs{I}=d$.  Let $\mathrm{N}_k(z)$ and $\mathrm{A}_k(z)$ be as above.  Let $n$ be the integer such that $2^n\leq d <2^{n+1}$, and let 
$$
\mathrm{R}^+(z):=\bigcup_{k\geq n+4}\mathrm{R}_k(z), \quad \mathrm{R}^+:=\bigcup_{z\in Q}\mathrm{R}^+(z),
$$
$$
\mathrm{A}^+(z):=\bigcup_{k\geq n+4}\mathrm{A}_k(z), \quad \mathrm{A}^+:=\bigcup_{z\in Q}\mathrm{A}^+(z),
$$
and let
$$
\mathrm{R}^-(z):=\left(\bigcup_{k<n+4}\mathrm{R}_k(z)\right)\cup\{\mathcal{R}\in\mathfrak{R}:z\in\Gamma_{\mathcal{R}}^{\delta'}\},\quad \mathrm{R}^-:=\bigcup_{z\in Q}\mathrm{R}^-(z),
$$

$$
\mathrm{A}^-(z):=\left(\bigcup_{k<n+4}\mathrm{A}_k(z)\right)\cup\{a\in\sigma_1:z\in T_{a}^{\delta'} (\textnormal{or } T_{a, -\overline{a}}^{\delta'})\},\quad \mathrm{A}^-:=\bigcup_{z\in Q}\mathrm{A}^-(z).
$$
By construction we have that if $S_{\delta'}\cap Q\neq\emptyset$ for a slit $S$ of $\rk S=k\geq n+4$, corresponding to a component $\mathcal{R}$, then $\Gamma^{\delta'}_{\mathcal{R}}\cap Q=\emptyset$ and similarly, for all other slits $S'$ corresponding to the component, we will have $S'_{\delta'}\cap Q=\emptyset$.  Thus, $\mathrm{R}^+\cap\mathrm{R}^-=\emptyset$.  It is also possible to show that $\mathrm{A}^+\cap\mathrm{A}^-=\emptyset$ as well.

If $\Delta V_\mathcal{R}(z)\neq 0$ for a point $z\in Q$ (respectively $\Delta V_a(z)\neq 0$ for $z\in Q$) then $\mathcal{R}\in\mathrm{R}^+\cup\mathrm{R}^-$ (respectively $a\in\mathrm{A}^+\cup\mathrm{A}^-$).   Similarly, if $\Delta V_{\mathcal{R},\mathcal{R}'}(z)\neq 0$ for a point in $z\in Q$  (respectively, $\Delta V_{a,a'}(z)\neq 0$ for $z\in Q$) then $\mathcal{R}, \mathcal{R}'\in\mathrm{R}^+\cup\mathrm{R}^-$ (respectively, $a,a'\in\mathrm{A}^+\cup\mathrm{A}^-$).  Additionally, if we have that $\Delta V_{\mathcal{R},a}(z)\neq 0$ for $z\in Q$ then we must have $\mathcal{R}\in\mathrm{R}_+\cup\mathrm{R}_-$ and $a\in\mathrm{A}_+\cup\mathrm{A}_-$.

By the same logic as above, we can conclude that
\begin{eqnarray*}
\sum_{\mathcal{R}\in\mathrm{R}^+,a\in\mathrm{A}^+}\abs{\Delta V_{\mathcal{R},a}(z)}+\sum_{\mathcal{R},\mathcal{R'}\in\mathrm{R}^+}\abs{\Delta V_{\mathcal{R},\mathcal{R}'}(z)}+\sum_{a,a'\in\mathrm{A}^+}\abs{\Delta V_{a,a'}(z)}\\
+\sum_{\mathcal{R}\in\mathrm{R}^+}\abs{\Delta V_\mathcal{R}(z)}+\sum_{a\in\mathrm{A}^+}\abs{\Delta V_a(z)}\leq\f{C(\delta, \epsilon)}{d^2}=\f{C(\delta, \epsilon)}{\abs{I}^2},
\end{eqnarray*}
and so
\begin{eqnarray*}
\iint_Q\left(\sum_{\mathcal{R}\in\mathrm{R}^+,a\in\mathrm{A}^+}\abs{\Delta V_{\mathcal{R},a}(z)}+\sum_{\mathcal{R},\mathcal{R'}\in\mathrm{R}^+}\abs{\Delta V_{\mathcal{R},\mathcal{R}'}(z)}+\sum_{a,a'\in\mathrm{A}^+}\abs{\Delta V_{a,a'}(z)}\right)\im z\ dxdy\\
+\iint_Q\left(\sum_{\mathcal{R}\in\mathrm{R}^+}\abs{\Delta V_\mathcal{R}(z)}+\sum_{a\in\mathrm{A}^+}\abs{\Delta V_a(z)}\right)\im z\ dxdy \leq C(\delta, \epsilon)d=C(\delta,\epsilon)\abs{I}.
\end{eqnarray*}
By the geometry of the construction, for any $\mathcal{R}\in\mathrm{R}^-$ and the estimates on the Laplacian of $V_\mathcal{R}$, we have
$$
\iint_Q\abs{\Delta V_\mathcal{R}(z)}\im z\ dxdy\leq C(\delta,\epsilon)l(\p\mathcal{R}\cup \p(-\overline{\mathcal{R}})\cap Q(20I))\leq C(\delta,\epsilon)l(\p\mathcal{R}\cap Q(20I))
$$
with $l$ the arc length.  For $a\in\mathrm{A}^-$, $a\in Q(20I)$, and the estimates on $V_a$ imply
$$
\iint_Q\abs{\Delta V_a(z)}\im z\ dxdy\leq C(\delta,\epsilon)\im a.
$$
Similarly, by the construction for we have that for $\mathcal{R},\mathcal{R}'\in\mathrm{R}^-$ and the estimates we have on $V_{\mathcal{R},\mathcal{R}'}$ we have
\begin{eqnarray*}
\iint_Q\abs{\Delta V_{\mathcal{R},\mathcal{R}'}(z)}\im z\ dxdy & \leq & C(\delta,\epsilon)l(\p\mathcal{R}\cup \p\mathcal{R}'\cap Q(20I))\\
 & \leq & C(\delta,\epsilon)\left(l(\p\mathcal{R}\cap Q(20I))+l(\p\mathcal{R}'\cap Q(20I))\right)
\end{eqnarray*}
While for $a, a'\in\mathrm{A}^-$, $a,a'\in Q(20I)$, and the estimates on $V_{a,a'}$ imply
$$
\iint_Q\abs{\Delta V_{a,a'}(z)}\im z\ dxdy\leq C(\delta,\epsilon)\left(\im a +\im a'\right).
$$
Using the above estimates, and the fact that $\mu=\sum_{\mathcal{R}\in\mathfrak{R}}l_{\mathcal{R}}+\sum_{a\in\sigma_1}\im a\, \delta_a$ is a Carleson measure, we can conclude
\begin{eqnarray*}
\iint_Q\left(\sum_{\mathcal{R},\mathcal{R'}\in\mathrm{R}^-}\abs{\Delta V_{\mathcal{R},\mathcal{R}'}(z)}+\sum_{a,a'\in\mathrm{A}^-}\abs{\Delta V_{a,a'}(z)}+\sum_{\mathcal{R}\in\mathrm{R}^-}\abs{\Delta V_\mathcal{R}(z)}+\sum_{a\in\mathrm{A}^-}\abs{\Delta V_a(z)}\right)\im z\ dxdy & & \\
\leq C(\delta,\epsilon)d=C(\delta,\epsilon)\abs{I}.
\end{eqnarray*}

If we have $\mathcal{R}\in\mathrm{R}^+$ and $\mathcal{R}'\in\mathrm{R}^-$ then we combine these two previous estimates and the construction to have
$$
\iint_Q\abs{\Delta V_{\mathcal{R},\mathcal{R}'}(z)}\im z\ dxdy \leq C(\delta,\epsilon)\left(\abs{I}+\left(l(\p\mathcal{R}'\cap Q(20I)\right)\right).
$$
To see this we note that the support of $\Delta V_{\mathcal{R},\mathcal{R}'}\cap Q(I)$ is
$$
Q\cap\left(\Gamma^{\delta'}_{\mathcal{R}}\cup\left(\bigcup_{S\in\mathcal{S}(\mathcal{R})}S_{\delta'}\right)\right)\cap\left(\Gamma^{\delta'}_{\mathcal{R}'}\cup\left(\bigcup_{S\in\mathcal{S}(\mathcal{R}')}S_{\delta'}\right)\right).
$$
On the set $Q\cap\left(\Gamma^{\delta'}_{\mathcal{R}}\cup\left(\bigcup_{S\in\mathcal{S}(\mathcal{R})}S_{\delta'}\right)\right)$ we use the estimates of $\Delta V_{\mathcal{R},\mathcal{R}'}(z)$ as above, and on the other set $Q\cap\left(\Gamma^{\delta'}_{\mathcal{R}'}\cup\left(\bigcup_{S\in\mathcal{S}(\mathcal{R}')}S_{\delta'}\right)\right)$ we use the properties of the construction of the region $\mathcal{R}'$.
And so we find
$$
\iint_Q\sum_{\mathcal{R}\in\mathrm{R}^+,\mathcal{R}'\in\mathrm{R}^-}\abs{\Delta V_{\mathcal{R},\mathcal{R}'}(z)}\im z\ dxdy \leq C(\delta,\epsilon)\abs{I}.
$$
In the case when we have $a\in\mathrm{A}^+$ and $a'\in\mathrm{A}^-$ with $a,a'\in Q(20I)$, a similar idea allows for the estimates for the function $V_{a,a'}$ to imply
$$
\iint_Q\abs{\Delta V_{a,a'}(z)}\im z\ dxdy\leq C(\delta,\epsilon)\left(\abs{I}+\im a'\right).
$$
and so
$$
\iint_Q\sum_{a\in\mathrm{A}^+,a'\in\mathrm{A}^-}\abs{\Delta V_{a,a'}(z)}\im z\ dxdy\leq C(\delta,\epsilon)\abs{I}.
$$
The final case is when we have $\mathcal{R}\in\mathrm{R}^+$ and $a\in\mathrm{A}^-$.  Here, in this case the ideas above and the support conditions and estimates on $\Delta V_{\mathcal{R},a}$ imply that
$$
\iint_Q\sum_{\mathcal{R}\in\mathrm{R}^+,a\in\mathrm{A}^-}\abs{\Delta V_{\mathcal{R},a}(z)}\im z\ dxdy\leq C(\delta,\epsilon)\abs{I}.
$$

These estimates all together imply that $\abs{\Delta V(z)}\im z\ dxdy$ is a Carleson measure.  An identical argument implies that $\abs{\p V(z)}\,dxdy$ is a Carleson measure with intensity $C(\delta,\epsilon)$.

To complete the verifications of the properties of $V$, we must show that $V$ is sufficiently close to an appropriate branch of $\log f_1$ on the set $\{z:\abs{f_2(z)}<\delta'\}$ where $0<\delta'$ is small compared to $\delta$ and $\epsilon$.
To accomplish this, we need the following proposition, which is a consequence of Hall's Lemma, see \cite{Duren} or \cite{Garnett}.  Let $f\in H^\infty(\C_+)$ with $\norm{f}_\infty\leq 1$ and let $Q$ be a square.  Let $\eta$ be a constant $0<\eta<1$.  Let 
$$
E_\eta:=\{z\in Q:\abs{f(z)}<\eta\}.
$$
The sets $E_\eta^{\re}$ and $E_\eta^{\im}$ will denote the vertical and horizontal projections of the set $E_\eta$.
\begin{lm}
\label{HallLemma}
For a given $0<\eta<1$ there exists a constant $\gamma=\gamma(\eta)$, $0<\gamma<\eta$ such that for any $f\in H^\infty(\C_+)$ with $\norm{f}_\infty\leq1$ satisfying
$$
\max\left\{\abs{E_\epsilon^{\re}},\abs{E_\epsilon^{\im}}\right\}\geq\f{1}{4}\abs{I}
$$
the inequality $\abs{f(z)}<\eta$ holds for all $z$ in the upper half of $Q$.
\end{lm}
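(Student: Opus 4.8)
\ The plan is to recast the hypothesis as a statement about the nonnegative superharmonic function $u:=\log\frac{1}{\abs{f}}$ on $\C_+$ and then to propagate its size from the set $E_\gamma:=\{z\in Q:\abs{f(z)}<\gamma\}$ (for the threshold $\gamma=\gamma(\eta)$ that we will fix only at the end) to the upper half of $Q$ via Hall's Lemma. We may assume $f\not\equiv0$, and, fixing a point $z_0$ in the upper half of $Q$, we may assume $f(z_0)\ne0$ (otherwise $\abs{f(z_0)}=0<\eta$ is immediate) and in fact $z_0\notin E_\gamma$ (otherwise $\abs{f(z_0)}<\gamma<\eta$ already). Since $\norm{f}_\infty\le1$, the function $u$ is superharmonic and nonnegative on all of $\C_+$, and by definition $u(z)>\log\frac1\gamma=:T$ for every $z$ in the open set $E_\gamma$. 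Hence $u/T$ is a nonnegative superharmonic function which is $\ge1$ on $E_\gamma$, so it dominates the reduced function $R^{E_\gamma}_1$ of the constant $1$ over $E_\gamma$ in $\C_+$; writing $\omega(\cdot,E_\gamma,\C_+):=R^{E_\gamma}_1$ (the harmonic measure of $E_\gamma$ in $\C_+$, i.e.\ the probability that a Brownian path from the given point meets $E_\gamma$ before hitting $\R$), we get
$$
u(z_0)\ \ge\ T\cdot\omega(z_0,E_\gamma,\C_+).
$$

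The heart of the matter is the geometric bound: there is an absolute constant $c_0\in(0,1)$ with $\omega(z_0,E_\gamma,\C_+)\ge c_0$ whenever $z_0$ lies in the upper half of $Q$ and $E_\gamma\subset Q$ has one of its two coordinate projections of length at least $\frac14\abs{I}$. If $\abs{E_\gamma^{\re}}\ge\frac14\abs{I}$, then $E_\gamma^{\re}$ is exactly the shadow of $E_\gamma$ onto the base $I$, it has length $\ge\frac14\abs{I}$, while $z_0$ sits over $I$ at height between $\frac12\abs{I}$ and $\abs{I}$; this is precisely the shadow form of Hall's Lemma (see \cite{Duren}, \cite{Garnett}), and scale invariance of the configuration makes $c_0$ absolute. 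If instead $\abs{E_\gamma^{\im}}\ge\frac14\abs{I}$, choose whichever vertical edge of $Q$ lies at distance $\ge\frac12\abs{I}$ from $z_0$ (one of the two always does, since $\re z_0\in I$), let $H$ be the half-plane bounded by the line through that edge and containing $Q$, and observe that $E_\gamma^{\im}$ is now the shadow of $E_\gamma\subset Q$ onto $\partial H$. Hall's Lemma in the quarter-plane $H\cap\C_+$ — which reduces to the half-plane case through a quadratic conformal map of the quarter-plane onto a half-plane, under which the relevant lengths and the position of $z_0$ distort boundedly — yields $\omega(z_0,E_\gamma,H\cap\C_+)\ge c_0$, and since $H\cap\C_+\subset\C_+$ we get $\omega(z_0,E_\gamma,\C_+)\ge\omega(z_0,E_\gamma,H\cap\C_+)\ge c_0$. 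I expect this imaginary-projection case — converting a vertical projection into a genuine shadow by reflection, and then checking that every constant produced depends only on $\eta$ (through $\gamma$) and not on $f$ or on the scale of $Q$ — to be the only real obstacle; everything else is soft potential theory.

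Combining the two estimates gives $u(z_0)\ge c_0\,\log\frac1\gamma$. It now suffices to choose $\gamma=\gamma(\eta):=\eta^{2/c_0}$, which satisfies $0<\gamma<\eta<1$ and $c_0\log\frac1\gamma=2\log\frac1\eta>\log\frac1\eta$. Then $\log\frac1{\abs{f(z_0)}}=u(z_0)>\log\frac1\eta$, i.e.\ $\abs{f(z_0)}<\eta$; as $z_0$ was an arbitrary point of the upper half of $Q$, the lemma follows.
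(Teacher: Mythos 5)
The paper offers no proof of this lemma; it is cited as ``a consequence of Hall's Lemma, see \cite{Duren} or \cite{Garnett},'' so there is no internal argument to compare against --- the comparison can only be to those references. Your overall scheme (pass to the nonnegative superharmonic function $\log\frac{1}{\abs{f}}$, bound it from below at $z_0$ by $\log\frac1\gamma$ times the harmonic measure of $E_\gamma$ from $z_0$, then reduce the harmonic-measure lower bound to Hall's Lemma) is the standard route, and the real-projection case is handled correctly.

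The imaginary-projection case, however, has a genuine gap, precisely where you flag your own uncertainty. Conformal maps transport harmonic measure faithfully, but they do \emph{not} transport Euclidean orthogonal projection onto one straight boundary line into Euclidean orthogonal projection onto another. Normalize so the chosen vertical edge is $\{\re z=0,\ \im z\ge0\}$ with $Q$ to its left; the map $\phi(z)=-z^2$ sends the quarter-plane to $\C_+$, and for $z=-s+iy$ ($s,y>0$) one has $\re\phi(z)=y^2-s^2$. Thus the shadow of $\phi(E_\gamma)$ on $\R$ is $\{\,y^2-s^2:-s+iy\in E_\gamma\,\}$, which is not $\{\,y^2:y\in E_\gamma^{\im}\,\}$. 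If $E_\gamma$ lies in a thin neighbourhood of a hyperbola $y^2-s^2=\mathrm{const.}$ inside $Q$, this shadow collapses to an arbitrarily short interval even though $\abs{E_\gamma^{\im}}\ge\frac14\abs{I}$, and one can arrange $\abs{E_\gamma^{\re}}<\frac14\abs{I}$ in the same example, so falling back on the real-projection case is not available. Hence the step ``Hall's Lemma in the quarter-plane \ldots\ yields $\omega(z_0,E_\gamma,H\cap\C_+)\ge c_0$'' does not follow from the half-plane lemma by the quadratic change of variables: the phrase ``the relevant lengths \ldots\ distort boundedly'' conflates the image of the projection with the projection of the image. Handling the vertical projection requires a projection that is conformally natural for $\C_+$ as seen from $z_0$, and that estimate is exactly the nontrivial content the paper is outsourcing to \cite{Duren} and \cite{Garnett}; it is not soft, and your sketch does not supply it.
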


Let $\mathcal{O}$ be a connected component of the set $\{z\in\C_+:\abs{q(z)}<\eta\}$ where $\gamma=\gamma(\eta)$ from the above lemma.  By the maximum modulus principle, the set $\mathcal{O}$ is simply connected.  Let $n\in\Z$ be the smallest integer such that there exists a square $Q=Q(I)$ with base $I$, $\abs{I}=2^n$ for which $\mathcal{O}\subset Q$.

\begin{lm}
\label{lm7}
If, for a slit S, we have $S_{\delta'}\cap\mathcal{O}\neq\emptyset$ then the rank of S is at least $n-3$.
\end{lm}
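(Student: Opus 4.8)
This lemma is, up to the symmetrization of the contour, the analogue of a lemma in Treil \cite{TreilStable}, and my plan is to run his argument --- the $H^\infty_\R$-symmetry is not used here. I would argue by contradiction: suppose $k:=\rk(S)\le n-4$, so the altitude $d=d(S)$ of $S$ satisfies $2^k\le d<2^{k+1}$. The first step is to localize the slit. By construction $S$ is either a vertical segment of length $d$ dropped from a point $w_S$ of $\p\mathcal{R}$ (or of a circle $T_a$, $T_{a,-\overline{a}}$) down to $\R$, or a $\Gamma$-shaped broken line all of whose arms have length comparable to $d$; since $S_{\delta'}$ is obtained from $S$ by adjoining a neighbourhood of width $\le\f{\delta'}{100}\,d\ll d$, it follows that $S_{\delta'}$ lies in a Carleson square $Q_S=Q(I_S)$ with $|I_S|\le 2^{k+3}\le 2^{n-1}$. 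Since $S_{\delta'}\cap\mathcal{O}\ne\emptyset$, the component $\mathcal{O}$ meets a Carleson square of side $\le 2^{n-1}$.

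Next I would record that $\mathcal{O}$ lies outside the Carleson contour. Shrinking $\eta$ once and for all so that $\eta\le\delta/2$ (keeping $\delta'<\gamma(\eta)$), on $\mathcal{O}$ we have $|q|<\eta\le\delta/2$, hence $|p|\ge\delta/2>\delta'$ by the Corona condition. Therefore $\mathcal{O}$ is simply connected and is disjoint from $\mathfrak{U}$ and from every disc $D_a$, $a\in\sigma_1$ (where $|p|\le\delta'$); in particular $\mathcal{O}$ is contained in one connected component of $\C_+$ with $\mathfrak{U}$, all the discs $D_a$, and all the slits removed.

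The heart of the proof is then to upgrade ``$\mathcal{O}$ meets a Carleson square of side $\le 2^{n-1}$'' to ``$\mathcal{O}$ is contained in one'': this contradicts the minimality of $n$ and so forces $\rk(S)\ge n-3$. Here I would argue as follows. The slit $S$ hangs from $w_S\in\p\mathcal{R}$ (or from $\p D_a$), and immediately on the side of $S$ facing $\mathcal{R}$ (resp.\ $D_a$) near $w_S$ one is already inside $\mathfrak{U}$ (resp.\ $D_a$), i.e.\ inside the forbidden region $\{|p|\le\delta'\}$; together with the thin strip around $S$ this gives a ``wall'' of $\{|p|\le\delta'\}$ that $\mathcal{O}$ cannot cross, which blocks $\mathcal{O}$ from escaping $Q_S$ upward. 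To block escape sideways I would use Hall's Lemma (Lemma \ref{HallLemma}): if $\mathcal{O}$ had horizontal or vertical extent exceeding a fixed multiple of $d$, then in some Carleson square of side comparable to $d$ and abutting $Q_S$ the set $\{|q|<\gamma\}$ would project onto more than a quarter of the base, so Hall's Lemma would force $|q|<\eta$ on the top half of that square; since $\mathcal{O}$ is a connected component of $\{|q|<\eta\}$ and the new points join to $\mathcal{O}$ through $\{|q|<\eta\}$, they lie in $\mathcal{O}$ --- and tracking them against the Carleson contour, using Lemma \ref{CarlRegion} and Proposition \ref{CarlesonMeasures} for the geometry together with the disjointness of the neighbourhoods $S'_{\delta'}$ of the slits, produces the contradiction.

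The first two steps are routine bookkeeping; the step I expect to be the main obstacle is the last one --- pinning down exactly which connected component of the complement of the contour contains $\mathcal{O}$, and bounding its diameter by the altitude of $S$. This is where the stopping-time geometry of the symmetric Carleson contour has to be combined with Hall's Lemma, and since the symmetry is irrelevant at this point it should run essentially verbatim as in Treil \cite{TreilStable}.
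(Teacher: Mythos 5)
Your structural plan is genuinely different from the paper's, and the third step --- the one you flag as the main obstacle --- has a real gap as you describe it.

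The paper does not attempt to confine $\mathcal{O}$ to a smaller Carleson square and contradict the minimality of $n$. Instead it splits on $\max\{|\mathcal{O}^{\re}|,|\mathcal{O}^{\im}|\}$ versus $\f{1}{4}|I|=2^{n-2}$. In the small-projection case, the minimality of $n$ together with simple connectedness forces every $z\in\mathcal{O}$ to have $\im z>\f{1}{4}|I|$, so $\mathcal{O}$ simply lies above the reach of any slit of rank $<n-3$ --- no contour geometry and no Hall's Lemma is needed, just that a low-rank slit together with its thin neighborhood sits entirely below height comparable to $2^{n-3}$. In the large-projection case, the paper forms the square $Q(J)$ with base $J$ centered at $\re z_0$ and $|J|=2\im z_0<2^{n-2}$ (here $z_0$ is the origin of $S$), shows $E=\mathcal{O}\cap Q(J)$ has a projection exceeding $\f{1}{4}|J|$, and then applies Lemma~\ref{HallLemma} directly to conclude $|q|$ is small in the top half of $Q(J)$, hence small at $z_0$. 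But by construction $|p(z_0)|<\delta'$ as well, so the Corona condition $\inf(|p|+|q|)>0$ is violated. The contradiction is with the Corona data, not with the minimality of $n$.

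The specific step in your proposal that fails is the ``wall'' argument. The slit $S$ and its neighborhood $S_{\delta'}$ are drawn in $\C_+\setminus\mathcal{R}$; they are precisely the places where the piecewise-defined branch of logarithm jumps and gets mollified, not places where $|p|\le\delta'$. So the thin strip around $S$ is not part of the forbidden region $\{|p|\le\delta'\}$, and nothing prevents $\mathcal{O}$ from crossing it. The only genuine barrier is $\mathcal{R}$ (or $D_a$) itself, which sits at and above $z_0$ and which $\mathcal{O}$ can simply circumvent laterally. Consequently the claim that $\mathcal{O}$ cannot escape $Q_S$ upward is unsupported, and the contradiction with the minimality of $n$ never materializes. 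Using Hall's Lemma at the origin $z_0$ of the slit to contradict the Corona condition, as the paper does, sidesteps this entirely.
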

\begin{proof}
Let $\mathcal{O}^{\re}$ and $\mathcal{O}^{\im}$ be the vertical and horizontal projections of $\mathcal{O}$.  If
$$
\max\{\abs{\mathcal{O}^{\re}},\abs{\mathcal{O}^{\im}}\}<\f{1}{4}\abs{I}=2^{n-2}
$$
then by the definition of $n$ and the fact that $\mathcal{O}$ is simply connected, we have for all $z\in\mathcal{O}$, $\im z>\f{1}{4}\abs{I}$.  Hence, if $\rk S<n-3$ then $S_{\delta'}\cap\mathcal{O}=\emptyset$.

Now suppose that
$$
\max\{\abs{\mathcal{O}^{\re}},\abs{\mathcal{O}^{\im}}\}\geq\f{1}{4}\abs{I}=2^{n-2}.
$$
Let $S$ be a slit of rank k, with $k<n-3$, and $S_{\delta'}\cap\mathcal{O}\neq\emptyset$.  Let $z_0$ be the origin of the slit $S$, i.e., either a point in $\p\mathcal{R}_{\delta'}$ or $\sigma_1$.  Let $J$ be the interval in $\R$ with center at the point $\re z_0$ or length $2\im z_0$.  Since $\mathcal{O}$ is connected, for the set $E:=\mathcal{O}\cap Q(J)$ we have
$$
\max\{\abs{E^{\re}},\abs{E^{\im}}\}>\f{1}{4}\abs{J}.
$$
By Proposition \ref{HallLemma}, $\abs{q(z_0)}<\delta'$.  This leads to a contradiction, since we were working under the assumption that $p$ and $q$ were Corona Data, namely, 
$$
\inf_{z\in\C_+}\left(\abs{p(z)}+\abs{q(z)}\right)=3\delta'>0
$$
and by construction of the Carleson regions, $\abs{p(z_0)}<\delta'$.
\end{proof}

Fix a point $z_0\in\mathcal{O}$.  For each connected component $\mathcal{R}$ with $\mathcal{R}\neq-\overline{\mathcal{R}}$, define on the set $\mathcal{O}$ a branch of $\log(\alpha_\mathcal{R}B_{\mathcal{R}}(z)B_{-\overline{\mathcal{R}}}(z))$, $\alpha_R\in\C$ a unimodular number, such that $\log(\alpha_\mathcal{R}B_\mathcal{R}(z_0)B_{-\overline{\mathcal{R}}}(z_0))=V_\mathcal{R}(z_0)$.  

If, for a component $\mathcal{R}$, there is a slit corresponding to it such that $S_{\delta'}\cap\mathcal{O}\neq\emptyset$, then
$$
\abs{V_\mathcal{R}(z)-\log(\alpha_\mathcal{R}B_\mathcal{R}(z)B_{-\overline{\mathcal{R}}}(z))}\leq NC(\delta,\epsilon),
$$
where $N$ is the number of slits $S$ corresponding to $\mathcal{R}$ for which $S_{\delta'}\cap\mathcal{O}\neq\emptyset$.  As above, we let $n$ be the smallest integer for which $\mathcal{O}\subset Q$, with $Q=Q(I)$ such a square.  Let $\mathfrak{R}_0$ be the set of all components $\mathcal{R}\in\mathfrak{R}$ with $\mathcal{R}\neq-\overline{\mathcal{R}}$ such that there exist no slits, $\rk S\geq n+4$ corresponding to this component and satisfying $S_{\delta'}\cap\mathcal{O}\neq\emptyset$.  

By Lemma \ref{lm2}, for any $k\geq n-3$ there exists at most $C(\delta,\epsilon)$ slits of rank $k$ such that $S_{\delta'}\cap\mathcal{O}\neq\emptyset$.  By Lemma \ref{lm7}, there exists no slits $S$ with $\rk S<n-3$ for which $S_{\delta'}\cap\mathcal{O}\neq\emptyset$, and therefore at most $C(\delta,\epsilon)$ slits $S$, with $\rk S<n+4$ satisfying $S_{\delta'}\cap\mathcal{O}\neq\emptyset$.  It then follows that
$$
\sum_{\stackrel{\mathcal{R}\in\mathfrak{R}_0}{\mathcal{R}\neq-\overline{\mathcal{R}}}}\abs{V_\mathcal{R}(z)-\log(\alpha_\mathcal{R}B_\mathcal{R}(z)B_{-\overline{\mathcal{R}}}(z))}\leq C(\delta,\epsilon),\quad z\in\mathcal{O}.
$$

For a component $\mathcal{R}$ with $\mathcal{R}\neq-\overline{\mathcal{R}}$ but not in $\mathfrak{R}_0$, it is possible to obtain a better estimate.  Indeed, if for a component $\mathcal{R}$ there exists a slit $S$ of rank $k\geq n+4$ satisfying $S_{\delta'}\cap\mathcal{O}\neq\emptyset$, then for other slits $S'$ corresponding to this component, $S'_{\delta'}\cap\mathcal{O}=\emptyset$.  Therefore, 
\begin{eqnarray*}
\abs{V_\mathcal{R}(z)-\log(\alpha_\mathcal{R}B_\mathcal{R}(z)B_{-\overline{\mathcal{R}}}(z))} & \leq & \left(\sup_{z\in S_{\delta'}}\abs{V_\mathcal{R}'(z)}+\sup_{z\in S_{\delta'}}\abs{(B_\mathcal{R}(z)B_{-\overline{\mathcal{R}}}(z))'}\right)\textnormal{diam}(S_{\delta'}\cap\mathcal{O})\\
 & \leq & C(\delta,\epsilon)2^{-k}\textnormal{diam}\,Q\leq C(\delta,\epsilon) 2^{n-k}.
\end{eqnarray*}
We sum this estimate over $k\geq n+4$ to obtain
$$
\left\vert\sum_{\stackrel{\mathcal{R}\notin\mathfrak{R}_0}{\mathcal{R}\neq-\overline{\mathcal{R}}}}V_\mathcal{R}(z)-\log(\alpha_\mathcal{R}B_\mathcal{R}(z)B_{-\overline{\mathcal{R}}}(z))\right\vert\leq C(\delta,\epsilon), \quad z\in\mathcal{O}
$$
Coupling this estimate with the one from above we can conclude 
$$
\left\vert\sum_{\stackrel{\mathcal{R}\in\mathfrak{R}}{\mathcal{R}\neq -\overline{\mathcal{R}}}}V_\mathcal{R}(z)-\log(\alpha_\mathcal{R}B_\mathcal{R}(z)B_{-\overline{\mathcal{R}}}(z))\right\vert\leq C(\delta,\epsilon), \quad z\in\mathcal{O}.
$$

Analogously, for any $a\in\sigma_1\cap \left(\C_+\setminus i\R\right)$, applying identical reasoning lets us conclude that there exists a constant $\alpha_a$ with $|\alpha_a|=1$ and a branch of $\log(\alpha_a b_a(z)b_{-\overline{a}}(z))$ so that $|V_a-\log(\alpha_a b_a(z)b_{-\overline{a}}(z)) |\leq C$ for $z\in\mathcal{O}$.

We next consider the case of $V_{\mathcal{R},\mathcal{R}'}$.  Fix a point $z_0\in\mathcal{O}$.  For each connected component $\mathcal{R}\cup\mathcal{R}\cup I_{\mathcal{R},\mathcal{R}'}$, define on the set $\mathcal{O}$ a branch of $\log(\alpha_\mathcal{R}B_{\mathcal{R}}(z)B_{\mathcal{R'}}(z))$, $\alpha_R\in\C$ a unimodular number, such that $\log(\alpha_\mathcal{R}B_\mathcal{R}(z_0)B_{\mathcal{R}'}(z_0))=V_\mathcal{R}(z_0)$.  

If, for a component $\mathcal{R}\cup\mathcal{R}'\cup I_{\mathcal{R},\mathcal{R}'}$, there is a slit corresponding to it such that $S_{\delta'}\cap\mathcal{O}\neq\emptyset$, then
$$
\abs{V_{\mathcal{R},\mathcal{R}'}(z)-\log(\alpha_\mathcal{R}B_\mathcal{R}(z)B_{\mathcal{R}'}(z))}\leq NC(\delta,\epsilon),
$$
where $N$ is the number of slits $S$ corresponding to $\mathcal{R}$ for which $S_{\delta'}\cap\mathcal{O}\neq\emptyset$.  Again let $n$ be the smallest integer for which $\mathcal{O}\subset Q$, with $Q=Q(I)$ such a square.  Let $\mathfrak{R}_0$ be the set of all components $\mathcal{R}\cup\mathcal{R}\cup I_{\mathcal{R},\mathcal{R}'}$ such that there exist no slits, $\rk S\geq n+4$ corresponding to this component and satisfying $S_{\delta'}\cap\mathcal{O}\neq\emptyset$.  

By Lemma \ref{lm2}, for any $k\geq n-3$ there exists at most $C(\delta,\epsilon)$ slits of rank $k$ such that $S_{\delta'}\cap\mathcal{O}\neq\emptyset$.  By Lemma \ref{lm7}, there exists no slits $S$ with $\rk S<n-3$ for which $S_{\delta'}\cap\mathcal{O}\neq\emptyset$, and therefore at most $C(\delta,\epsilon)$ slits $S$, with $\rk S<n+4$ satisfying $S_{\delta'}\cap\mathcal{O}\neq\emptyset$.  It then follows that
$$
\sum_{\mathcal{R}\cup\mathcal{R}'\cup I_{\mathcal{R},\mathcal{R}'}\in\mathfrak{R}_0}\abs{V_{\mathcal{R},\mathcal{R}'}(z)-\log(\alpha_\mathcal{R}B_\mathcal{R}(z)B_{\mathcal{R}'}(z))}\leq C(\delta,\epsilon),\quad z\in\mathcal{O}.
$$

For a component $\mathcal{R}\cup\mathcal{R}'\cup I_{\mathcal{R},\mathcal{R}'}\notin\mathfrak{R}_0$, it is possible to obtain a better estimate.  Indeed, if for a component $\mathcal{R}\cup\mathcal{R}'\cup I_{\mathcal{R},\mathcal{R}'}$ there exists a slit $S$ of rank $k\geq n+4$ satisfying $S_{\delta'}\cap\mathcal{O}\neq\emptyset$, then for other slits $S'$ corresponding to this component, $S'_{\delta'}\cap\mathcal{O}=\emptyset$.  Therefore, 
\begin{eqnarray*}
\abs{V_{\mathcal{R},\mathcal{R}'}(z)-\log(\alpha_\mathcal{R}B_\mathcal{R}(z)B_{\mathcal{R}'}(z))} & \leq & \left(\sup_{z\in S_{\delta'}}\abs{V_{\mathcal{R},\mathcal{R}'}'(z)}+\sup_{z\in S_{\delta'}}\abs{(B_\mathcal{R}(z)B_{\mathcal{R}'}(z))'}\right)\textnormal{diam}(S_{\delta'}\cap\mathcal{O})\\
 & \leq & C(\delta,\epsilon)2^{-k}\textnormal{diam}\,Q\leq C(\delta,\epsilon) 2^{n-k}.
\end{eqnarray*}
We sum this estimate over $k\geq n+4$ and can conclude 
$$
\left\vert\sum_{\mathcal{R}\cup\mathcal{R}'\cup I_{\mathcal{R},\mathcal{R}'}\notin\mathfrak{R}_0}V_{\mathcal{R},\mathcal{R}'}(z)-\log(\alpha_\mathcal{R}B_\mathcal{R}(z)B_{\mathcal{R}'}(z))\right\vert\leq C(\delta,\epsilon), \quad z\in\mathcal{O}
$$
Coupling this estimate with the one from above we can conclude 
$$
\left\vert\sum_{\stackrel{\mathcal{R},\mathcal{R}'\in\mathfrak{R}}{\mathcal{R}=-\overline{\mathcal{R}}, \mathcal{R}'=-\overline{\mathcal{R}'}}}V_{\mathcal{R},\mathcal{R}'}(z)-\log(\alpha_\mathcal{R}B_\mathcal{R}(z)B_{\mathcal{R}'}(z))\right\vert\leq C(\delta,\epsilon), \quad z\in\mathcal{O}.
$$


The case of $V_{a,a'}$ is analogous to the case of $V_a$ and so we omit it.

Combining all these estimates together then, in turn, implies that the function $V$ is sufficiently close to an appropriate branch of logarithm of $p$ and hence $V$ has all the necessary properties.  This then proves Proposition \ref{CorrectingV_new}.

\section*{References}

\begin{biblist}

\bib{Bass}{article}{AUTHOR = {Bass, H.},
     TITLE = {{$K$}-theory and stable algebra},
   JOURNAL = {Inst. Hautes \'Etudes Sci. Publ. Math.},
    NUMBER = {22},
      YEAR = {1964},
     PAGES = {5--60}}

\bib{Bourgain}{article}{
    author={Bourgain, J.},
     title={On finitely generated closed ideals in $H\sp \infty(D)$},
  language={English, with French summary},
   journal={Ann. Inst. Fourier (Grenoble)},
    volume={35},
      date={1985},
    number={4},
     pages={163\ndash 174},
}

\bib{Carleson}{article}{
    author={Carleson, L.},
     title={Interpolations by bounded analytic functions and the corona
            problem},
   journal={Ann. of Math. (2)},
    volume={76},
      date={1962},
     pages={547\ndash 559},
     }

\bib{CS}{article}{
   author={Corach, G.},
   author={Su{\'a}rez, F. D.},
   title={Stable rank in holomorphic function algebras},
   journal={Illinois J. Math.},
   volume={29},
   date={1985},
   number={4},
   pages={627--639},
}

\bib{Duren}{book}{
    author={Duren, P. L.},
     title={Theory of $H\sp{p}$ spaces},
    series={Pure and Applied Mathematics, Vol. 38},
 publisher={Academic Press},
     place={New York},
      date={1970},
     pages={xii+258},
}

\bib{Garnett}{book}{
    author={Garnett, J. B.},
     title={Bounded analytic functions},
    series={Pure and Applied Mathematics},
    volume={96},
 publisher={Academic Press Inc. [Harcourt Brace Jovanovich Publishers]},
     place={New York},
      date={1981},
     pages={xvi+467},
}

\bib{JMW}{article}{
   author={Jones, P. W.},
   author={Marshall, D.},
   author={Wolff, T.},
   title={Stable rank of the disc algebra},
   journal={Proc. Amer. Math. Soc.},
   volume={96},
   date={1986},
   number={4},
   pages={603--604},
   }

\bib{Mortini}{article}{
   author={Mortini, R.},
   title={An example of a subalgebra of $H\sp \infty$ on the unit disk whose
   stable rank is not finite},
   journal={Studia Math.},
   volume={103},
   date={1992},
   number={3},
   pages={275--281},
}

\bib{MortWick}{article}{
   author={Mortini, R.},
   author={Wick, B. D.},
   title={The Bass and Topological Stable Ranks of $H^\infty_\R(\D)$ and $A_\R(\D)$},
   journal={J. Reine Angew. Math.},
   pages={to appear}
}

\bib{R}{article}{
   author={Rupp, R.},
   title={Stable rank of subalgebras of the disc algebra},
   journal={Proc. Amer. Math. Soc.},
   volume={108},
   date={1990},
   number={1},
   pages={137--142},
   }

\bib{TreilHankel}{article}{
   author={Treil, S. R.},
   title={Hankel operators, embedding theorems and bases of co-invariant
   subspaces of the multiple shift operator},
   language={Russian},
   journal={Algebra i Analiz},
   volume={1},
   date={1989},
   number={6},
   pages={200--234},
   translation={
      journal={Leningrad Math. J.},
      volume={1},
      date={1990},
      number={6},
      pages={1515--1548},
   },
}

\bib{TreilStable}{article}{
    author={Treil, S.},
     title={The stable rank of the algebra $H\sp \infty$ equals $1$},
   journal={J. Funct. Anal.},
    volume={109},
      date={1992},
    number={1},
     pages={130\ndash 154},
}

\bib{Vidy}{book}{
    author={Vidyasagar, M.},
     title={Control System Synthesis: A Factorization Approach},
    series={MIT Press Series in Signal Processing, Optimization, and
            Control, 7},
      publisher={MIT Press},
     place={Cambridge, MA},
      date={1985},
     pages={xiii+436},
      isbn={0-262-22027-X},
 }

\bib{W}{article}{
   author={Wick, B. D.},
   title={Stabilization in $H_{\mathbb{R}}^\infty(\mathbb{D})$},
   journal={Publ. Mat.},
   volume={54},
   date={2010},
   number={1},
   pages={25--52}
}

\end{biblist}

\end{document}